\documentclass{amsart}
\usepackage{amssymb,pst-all}
\usepackage{amsbsy}
\usepackage{amscd}
\usepackage{amsmath}
\usepackage{amsthm}
\usepackage{amsxtra}
\usepackage{latexsym}

\usepackage[mathscr]{eucal}
\usepackage{upgreek}
\usepackage{wasysym}
\usepackage{xcolor}
\definecolor{rouge}{rgb}{0.85,0.1,.4}
\definecolor{bleu}{rgb}{0.1,0.2,0.9}
\definecolor{violet}{rgb}{0.7,0,0.8}

\usepackage[colorlinks=true,linkcolor=bleu,urlcolor=violet,citecolor=rouge]{hyperref} 

\newcommand{\affrho}{\widehat{\rho}}
\newcommand{\affW}{\widehat{W}}
\newcommand{\wh}{\widehat}
\newcommand{\affh}{\widehat{\mathfrak{h}}}
\newcommand{\dual}[1]{{#1}^*}
\newcommand{\inv}{^{-1}}
\newcommand{\Vg}[1]{V^{#1}(\fing)}
\newcommand{\Vs}[1]{V_{#1}(\fing)}
\newcommand{\Zhu}{A}
\newcommand{\BGG}{{\mathcal O}}
\newcommand{\bra}{{\langle}}
\newcommand{\ket}{{\rangle}}

\newcommand{\Lam}{\Lambda}
\newcommand{\germ}{\mathfrak}
\newcommand{\cprime}{$'$}
\newcommand{\on}{\operatorname}
\newcommand{\+}{\mathop{\oplus}}
\renewcommand{\*}{{\otimes}}
\newcommand{\mc}{\mathcal}
\newcommand{\mf}{\mathfrak}

\newcommand{\fing}{\mf{g}}

\newcommand{\affg}{\widehat{\mf{g}}}

\newcommand{\isomap}{{\;\stackrel{_\sim}{\to}\;}}
\newcommand{\Z}{\mathbb{Z}}
\newcommand{\C}{\mathbb{C}}
\newcommand{\N}{\mathbb{N}}
\newcommand{\Q}{\mathbb{Q}}

\newcommand{\ra}{\rightarrow}
\newcommand{\lam}{\lambda}
\newcommand{\che}{^{\vee}}
\newcommand{\affP}{\widehat{P}_k}
\newcommand{\eW}{\widetilde{W}}

\def\g{\mathfrak{g}}

\def\h{\mathfrak{h}}

\newcommand\sm{{\mathsf m}}

\newcommand\Ga{{\Gamma}}

\def\sl{\mathfrak{sl}}

\def\leq{\leqslant}
\def\geq{\geqslant}

\newlength\yStones
\newlength\xStones
\newlength\xxStones

\makeatletter
\def\Stones{\pst@object{Stones}}
\def\Stones@i#1{%
  \pst@killglue%
  \begingroup%
  \use@par%
  \setlength\xxStones{\xStones}%
  \expandafter\Stones@ii#1,,\@nil
  \endgroup
  \global\addtolength\xStones{0.6cm}%
  \global\addtolength\yStones{-7.5mm}}%
\def\Stones@ii#1,#2,#3\@nil{%
  \rput(\xxStones,\yStones){%
    \psframebox[framesep=0]{%
      \parbox[c][6mm][c]{11mm}{\makebox[11mm]{$#1$}}}}%
  \addtolength\xxStones{1.2cm}%
  \ifx\relax#2\relax\else\Stones@ii#2,#3\@nil\fi}
\makeatother

\setlength\fboxrule{0.4pt}
\def\Stone#1{\fbox{\makebox[11mm]{\strut#1}}\kern2pt}

\DeclareMathOperator{\gr}{gr}

\DeclareMathOperator{\ad}{ad}

\newcommand{\dcl}{\DeclareMathOperator}
\dcl\cdet{cdet} \dcl\Sp{Specm} \dcl\depth{depth} \dcl\im{Im} \dcl\Span{span} \dcl\Ker{Ker} \dcl\Specm{Specm}
\dcl\Supp{Supp} \dcl\codim{codim} \dcl\Y{Y} \dcl\gl{\mathfrak{gl}}    \dcl\U{U} \dcl\T{T}
\dcl\qdet{qdet} \dcl\sgn{sgn} \dcl\diag{diag}
\dcl\dd{{\mathrm d}}


\theoremstyle{theorem}
\newtheorem{Th}{Theorem}[section]

\newtheorem{Pro}[Th]{Proposition}

\newtheorem{Lem}[Th]{Lemma}

\newtheorem{Co}[Th]{Corollary}

\theoremstyle{remark}
\newtheorem{Def}[Th]{Definition}
\newtheorem{Rem}[Th]{Remark}

\setcounter{secnumdepth}{2}
\setcounter{tocdepth}{1}

\title{Weight representations  of admissible affine vertex algebras}
\author{Tomoyuki Arakawa}
\address{Research Institute for Mathematical Sciences, Kyoto University,
 Kyoto 606-8502 JAPAN}
\email{arakawa@kurims.kyoto-u.ac.jp}

\author{Vyacheslav Futorny}
\address{Instituto de Matem\'atica e Estat\'istica, Universidade de S\~ao Paulo, S\~ao Paulo-SP,
Brazil}
\email{futorny@ime.usp.br}
\author{Luis Enrique Ramirez}
\address{Universidade Federal do ABC, Santo Andr\'e-SP,
Brazil}
\email{luis.enrique@ufabc.edu.br}
\begin{document}

\begin{abstract}
For an admissible affine vertex algebra 
$\Vs{k}$
of type $A$, 
we describe a new family of relaxed highest weight representations of $\Vs{k}$.
They are simple quotients of representations of the affine Kac-Moody algebra $\affg$ induced from
the following $\fing$-modules:
1) generic Gelfand-Tsetlin modules in the principal nilpotent orbit,
 in particular  all such modules induced from $\mf{sl}_2$;
2)
all Gelfand-Tsetlin modules 
  in the principal nilpotent orbit which are  induced from  $\mf{sl}_3$; 3) 
  all  simple Gelfand-Tsetlin modules over  $\mf{sl}_3$.
  This in particular gives the classification 
  of all simple  positive energy weight representations of $\Vs{k}$
  with finite dimensional weight spaces for $\fing=\mf{sl}_3$.

\end{abstract}

\maketitle

\section{Introduction}
Let $\fing$ be a complex simple finite dimensional Lie algebra,
$\affg=\fing[t,t\inv]+\C K$ the non-twisted affine Kac-Moody algebra associated with
$\fing$,
$\Vg{k}$   the universal affine vertex algebra
associated with $\fing$ at a non-critical level $k$,
$\Vs{k}$ the unique simple quotient of $\Vg{k}$.

The simple affine vertex algebra
$\Vs{k}$ 
is called {\em admissible} if it is isomorphic
to  an admissible representation \cite{KacWak89}
as  a $\affg$-module.
In the case when  $\fing=\mf{sl}_2$,
the modular properties and the Verlinde formula for $\Vs{k}$-modules
 were studied in  
\cite{Rid09,Rid10,CreRid12,CreRid13}.
To generalize these results to an arbitrary $\fing$,
one needs to classify  
 simple positive energy weight representations of $\Vs{k}$,
that is, 
$\Vs{k}$-modules that appear as
simple quotients of $\affg$-modules induced from
 simple weight representations of $\fing$ 
which are not necessarily highest weight. 

\medskip

By a well-known result of Zhu \cite{Zhu96},
there is a one-to-one correspondence
between
 simple positive energy representations of 
a graded  vertex algebra $V$
and simple $\Zhu(V)$-modules, where
$\Zhu(V)$ is {\em Zhu's algebra} of $V$.
In the case when
$V$ is an affine vertex algebra
$\Vs{k}$,
we have
\begin{align}
 \Zhu(\Vs{k})\cong U(\fing)/I_k
\label{eq:zhu-of-simple}
\end{align}
for some
two sided-ideal $I_k$ of the universal enveloping algebra
$U(\fing)$ of $\fing$.
Thus,
  a simple $U(\fing)$-module $M$
is an $\Zhu(\Vs{k})$-module if and only if  the annihilating ideal of $M$
in $U(\fing)$
contains  $I_k$.

In \cite{A12-2} the first named author  classified
primitive ideals of $U(\fing)$ containing $I_k$
for an admissible affine
vertex algebra $\Vs{k}$, proving the Adamovic-Milas conjecture \cite{AdaMil95}.
Such primitive ideals are exactly the annihilators
of  simple highest weight  representations
$L(\lam)$ of $\fing$
that appear as the top weight  part of a level $k$ admissible modules
$\widehat{L}_k(\lam)$
of $\affg$ 
that are $\Vs{k}$-modules.
Hence our problem amounts to classifying simple weight modules of
$U(\fing)$
having those annihilating ideals.

This is a  difficult problem. Complete classification
of simple weight  modules of $\fing$ is only known 
  for $\fing=\mathfrak{sl}_2$. It remains open for all other simple Lie algebras
although certain categories of modules were studied extensively (finite dimensional modules, category $\mathcal O$ etc.)
In particular, simple weight 
 modules which have finite dimensional weight spaces 
have been classified by Fernando \cite{Fe90} and Mathieu \cite{Mat00}. Among weight modules with infinite dimensional weight spaces the most studied is the class of Gelfand-Tsetlin modules for type $A$. 
On such modules the Gelfand-Tsetlin subalgebra (being a maximal commutative subalgebra) of the universal enveloping algebra of $\mf{sl}_n$ has a generalized eigenspace decomposition. Gelfand-Tsetlin  modules were introduced in
\cite{DFO89}, \cite{DFO92}, \cite{DFO94}. Gelfand-Tsetlin subalgebras are related to  important problems in representation theory of Lie algebras \cite{FO10}, \cite{FO14}, general hypergeometric functions on the complex Lie group $GL(n)$, \cite{Gr04}; problems in classical mechanics,  \cite{KW06}, \cite{KW6} among the others.

In this paper we describe several new families of simple positive energy weight representations of admissible affine vertex algebra
$\Vs{k}$
 for $\fing=\mf{sl}_n$.  More precisely, we construct explicitly the following classes of   simple weight $\Zhu(\Vs{k})$-modules 
  for $\fing=\mf{sl}_n$:
 
 \noindent - generic Gelfand-Tsetlin $\mf{sl}_n$-modules
in the principal nilpotent orbit
(Corollary \ref{cor-generic}, Theorem \ref{Th: Admissible induced by sl(2)}).
Here we say a simple $\fing$-module is in a nilpotent orbit $\mathbb{O}$ if 
the associated variety of its annihilating ideal is the closure of $\mathbb{O}$.
In this class of representations
those with finite dimensional weight spaces  are either highest weight modules or induced from an $\mathfrak{sl}_2$-subalgebra.
 Otherwise they have  infinite-dimensional weight spaces;\\
 - $\mf{sl}_n$-modules induced from simple  Gelfand-Tsetlin  $\mf{sl}_3$-modules
 in the principal nilpotent orbit
  (Theorem \ref{Th: Admissible induced by sl(3)}).
 All these module have infinite dimensional weight spaces;\\
 - Gelfand-Tsetlin $\mf{sl}_3$-modules (Theorem \ref{Th: Modules associated with minimal orbit} and Theorem \ref{thm-q>3}).
 We give a  complete classification of Gelfand-Tsetlin  $\Zhu(\Vs{k})$-module for  $\fing=\mf{sl}_3$.
  This includes all simple weight modules with finite weight multiplicities (both cuspidal and induced) as well as series of modules with infinite weight multiplicities.  \\
 
 
Hence 
 we obtain  in particular a  complete classification of all simple relaxed highest weight representations  with finite dimensional
 weight spaces of admissible affine vertex algebras
 $V_k(\fing)$
 for $\fing=\mf{sl}_3$.
   We note that a family of weight modules  for $V_k(\mf{sl}_3)$  has been  previously constructed in  
\cite{Ad16}.
\\

Our approach is based on the theory of Gelfand-Tsetlin modules developed in \cite{FGR}, \cite{FGR15}, \cite{FGR16}. In these papers explicit basis was constructed for different classes of simple Gelfand-Tsetlin modules for $\mathfrak{gl}_n$, including 
generic and $1$-singular cases. This basis generalizes the classical Gelfand-Tsetlin basis for finite dimensional representations.   In \cite{FGR} a complete classification and explicit construction of all simple Gelfand-Tsetlin modules for $\mathfrak{sl}_3$
is given. In all other cases the problem of classifying the simple Gelfand-Tsetlin modules is still open. \\

{\bf{Acknowledgments}}. 
The authors thank the anonymous referees for careful reading and helpfull comments.
T.A.  is supported in part by the Fapesp grant
(2015/06469-6)  and by JSPS KAKENHI Grant Numbers 25287004 and 26610006.
 He gratefully acknowledges the hospitality and excellent
working conditions at the S\~ao Paulo University where this work was done.
V.F. is
supported in part by the CNPq grant (301320/2013-6) and by the
Fapesp grant (2014/09310-5).

\section{Weight modules and admissible representations}
\subsection{Weight modules}
Let $\fing$ be a complex simple finite dimensional Lie algebra with a fixed 
triangular decomposition $\fing=\mf{n}_-\+\h\+\mf{n}_+$.
A $\fing$-module $M$ is called  \emph{weight} if $\h$ is diagonalizable on $M$. For $\lambda\in \h^*$ the subspace $M_{\lambda}$ of those $v\in V$ such that $hv=\lambda(h)v$ is the \emph{weight subspace} of weight $\lambda$.
 The set of all those $\lambda\in \h^*$ for which $M_{\lambda}\neq 0$ is the \emph{weight support} of $M$ and the dimension of  $M_{\lambda}$ is the \emph{weight multiplicity} of $\lambda$.

Let $\mathfrak p=
\mathfrak l\oplus \mathfrak m$ be a  parabolic subalgebra of $\fing$
with the Levi subalgebra $\mathfrak l$.
 If $N$ is a weight module over  $[\mf{l},\mf{l}]$ then one defines on it a structure of a  $\mathfrak l$-module 
by choosing any $\lambda\in (\h^{\perp})^*$ and setting $hv=\lambda(h)v$ for any $h\in \h^{\perp}$ and any $v\in N$. Here, $\h^{\perp}$ is the orthogonal complement of $[\mf{l},\mf{l}]\cap\h$ 
with respect to the Killing form.  Furthermore, consider 
$N$ as a $\mathfrak p$-module with a trivial action of
$\mathfrak m$. 

Now construct the induced $\fing$-module $M_{\mathfrak p}(\lambda, N)
=U(\fing)\otimes_{U(\mathfrak p)} N$.   
 If $N$ is simple $[\mf{l},\mf{l}]$-module then $M_{\mathfrak p}(\lambda, N)$ has a unique simple quotient $L_{\mathfrak p}(\lambda, N)$. 

When $\mf{p}$ is the fixed Borel subalgebra $\h+\mf{n}_+$,
we write $M(\lam)$ for $M_{\mathfrak p}(\lambda, \C)$
and $L(\lam)$ for $L_{\mathfrak p}(\lambda, \C)$.

We say that a simple weight $\fing$-module $L$ is \emph{cuspidal}  if it is not isomorphic to $L_{\mathfrak p}(\lambda, N)$ for any choice of a parabolic subalgebra ${\mathfrak p}\neq \fing$ and any choice of $N$. By \cite{DMP00}, if $L$ is cuspidal then  its weight support coincides with a coset of some  weight (any weight from the support) by the root lattice.  

\subsection{Affine vertex algebras}
\label{subsec:relaxed}
Let \begin{align}
\affg=\fing[t,t\inv]\+ \C K
\end{align}
be the affine Kac-Moody algebra
associated with $\fing$.
The commutation relation of $\affg$
is given by
\begin{align}
& [xt^m,yt^n]=[x,y]t^{m+n}+ m(x|y)\delta_{m+n,0}K,
\quad [K,\affg]=0,
\end{align}
where $(~|~)$ is the normalized invariant bilinear form of $\fing$.

For $k\in \C$,
let 
$\Vg{k}$
be the universal affine vertex algebra associated with $\fing$
at level $k$.
By definition
\begin{align}
 \Vg{k}=U(\affg)\*_{U(\fing[t]\+ \C K)}\C_k,
\end{align}
where
 $\C_k$ is the one-dimensional
representation of 
$\fing[t]\+ \C K$ on which 
$\fing[t]$ acts trivially and $K$ acts as a  multiplication by $k$.
The space $\Vg{k}$ is 
equipped with the natural vertex algebra structure (see
 \cite{Kac98,FreBen04}),
 and it is conformal
 by the Sugawara construction provided that $k\ne -h^{\vee}$,
 where $h^{\vee}$ is the dual Coxeter number of $\fing$.
 The unique simple graded quotient  $\Vs{k}$ of $\Vg{k}$ is called
 the {\em simple affine vertex algebra} associated with $\fing$
 at level $k$.
 
 For a conformal vertex algebra $V$,
 let $\Zhu(V)$ be Zhu's algebra of $V$.
 For a positive energy representation $M$ of $V$,
 the top conformal weight space $M_{top}$
 is naturally a module over $\Zhu(V)$,
 and moreover,
 the correspondence
 $M\mapsto M_{top}$ gives a one-to-one correspondence
 between
 simple positive energy representation of $V$ and
 simple $V$-modules (\cite{Zhu96}).

 In the case that $V=\Vg{k}$,
 we have the natural isomorphism
 \begin{align}
  \Zhu(\Vg{k})\cong U(\fing),
 \end{align}
 and hence,
 \begin{align}
  \Zhu(\Vs{k})\cong U(\fing)/I_k
 \end{align}
 for some two-sided ideal $I_k$ of $U(\fing)$.

A $\Vs{k}$-module is called {\em weight} it is a direct sum of weight modules over $\fing$.
In this paper we study  the positive energy weight representations
of an admissible affine vertex algebra $\Vs{k}$.

For a simple weight module  $E$ of $\fing$, 
let $\mathbf{M}_k(E)=U(\affg)\*_{U(\fing[t]\+ \C K)}E$,
where $E$ is considered as a $\fing[t]\+ \C K$-module on which $\fing[t]t$ acts trivially and $K$ acts the multiplication by $k$.
It is clear that $\mathbf{M}_k(E)$ is a weight module and so is its simple quotient $\mathbf{L}_k(E)$.

According to Zhu's Theorem  above
the correspondence
$E\mapsto \mathbf{L}_k(E)$
gives the  one to one correspondences between  simple weight representations of $A(\Vs{k})=U(\fing)/I_k$  and 
 simple relaxed positive energy highest weight representations  of $\Vs{k}$.

A positive energy weight representation is an example of a {\em relaxed highest representation} \cite{FST} of $\affg$.
A general simple relaxed highest representation is obtained 
from a simple positive energy weight representation by applying a spectral flow.
 \subsection{Admissible representations of $\affg$}
Let $\Delta$ be
the set  of roots,
$\Delta_+$ the set  of positive roots,
 $W$ the Weyl group of $\fing$.
Put $Q=\sum_{\alpha\in \Delta}\Z \alpha$,
$Q^\vee=\sum_{\alpha\in \Delta}\Z \alpha^{\vee}$,
where $\alpha^{\vee}=2\alpha/(\alpha,\alpha)$.
Let $P$ be the  weight lattice of $\fing$,
$P^{\vee}$ the coweight lattice.
Denote by $\theta$ and $\theta_s$ the highest root and the highest short root of $\fing$,
respectively.
We have $(\theta,\theta)=2$ and $(\theta_s,\theta_s)=2/r^{\vee}$,
where $r^{\vee}$ is the lacing number of $\fing$,
that is, 
the maximum number of the edges in the Dynkin diagram of $\fing$.
Let $\rho$ be the half sum of positive roots,
$\rho^{\vee}$  the half sum of positive coroots.
  
We fix the  Cartan subalgebra $\widehat{\h}$ of $\affg$
as
 $\widehat{\h}=\h\+ \C K$.  We will extend $\affg$ by a derivation $D$ and denote the extended algebra by $\widetilde{\g}$:
 $\widetilde{\g}=\fing[t,t\inv]\+ \C K + \C D$. 
 Let  $\widetilde{\h}=\h\+ \C K\+ \C D$ be the extended Cartan subalgebra of $\widetilde{\g}$,
 $\widetilde{\h}^*=\h^*\+ \C \Lam_0\+ \C \delta$ its dual,
 where $\Lam_0(K)=\delta(D)=1$,
 $\Lam(\h+\C D)=\delta(\h\+ C K)=0$.
 The dual $\affh^*$ of $\affh$ is identified with the subspace $\h\+\C \Lam_0\subset \widetilde{\h}^*$.
 
 Let $\widehat{\Delta}$ be the set of roots of $\affg$ in $\widetilde{\h}^*$,
  $\widehat{\Delta}^{re}\subset \widehat{\Delta}$ the set of real root of $\affg$,
   $\widehat{\Delta}^{re}_+$ the set of real positive roots.
   We have
   $\widehat{\Delta}^{re}=\{\alpha+n\delta\mid \alpha\in \Delta, n\in \Z\}$,
   $\widehat{\Delta}^{re}_+=\{\alpha+n\delta\mid \alpha\in \Delta_+, n\in \Z_{\geq 0}\}\sqcup
   \{-\alpha+n\delta\mid \alpha\in \Delta_+,n\in \Z_{\geq 1}\}$.

   Denote by $\widehat{W}$ the  Weyl group of $\affg$.
Then  $\affW=W\ltimes Q^{\vee}$.   
Let $\eW=W\ltimes P^{\vee}$, the extended affine Weyl group of $\fing$.
For $\mu\in P^{\vee}$,
we denote by $t_{\mu}$ the corresponding element of $\eW$.
We have
$\eW= \widetilde{W}_+\ltimes \affW$,
where $\widetilde{W}_+$ is a finite subgroup of $\eW$ consisting of elements of length zero.
The group $\widetilde{W}_+$  is described as follows.
Write
$\theta$ as a sum of simple roots: $\theta=\sum_{i=1}^la_i \alpha_i$,
and set
$J=\{i\in \{1,\dots, l\};
a_i=1\}$.
Then we
have
\begin{align}
 \eW_+=\{t_{\varpi_j}w_j; j\in J\},
\label{eq:elements-of-eW-of-Dynkin-auto}
\end{align}
where 
$\varpi_j$ is the $j$-th fundamental weight of $\fing$
and $w_j$ is the unique element of $W$
which fixes the set $\{\alpha_1,\dots,\alpha_{l},-\theta\}$
and $w_j(-\theta)=\alpha_j$.

 For $\lam\in \dual{\affh}$,
let $\wh\Delta(\lam)$
and $\affW(\lam)$ be its integral root system
and its integral Weyl group,
respectively,
that is,
 \begin{align}
& \wh\Delta(\lam)
=\{\alpha\in \wh{\Delta}^{re}; \bra
 \lam+\affrho,\alpha\che\ket
\in \Z\},
&\affW(\lam)=\bra s_{\alpha}; \alpha\in\wh \Delta(\lam) \ket\subset \affW,
\end{align}
where $s_{\alpha}$ is the reflection corresponding to $\alpha$
and  $\affrho=\rho+h^{\vee}\Lam_0$.
Let
$\wh\Delta(\lam)_+=\wh{\Delta}(\lam)\cap \wh{\Delta}^{re}_+$,
the set of positive roots of 
$\wh{\Delta}(\lam)$
and 
$\wh\Pi(\lam)\subset \wh{\Delta}(\lam)_+$, the set of simple roots.

For $\lam\in \dual{\h}$ and $k\in \C$,
set $\widehat{L}_k( \lam):=\mathbf{L}_k(L(\lam))$,
which  is the irreducible highest weight representation of $\affg$ with highest weight 
$\widehat{\lam}:=\lam+k\Lam_0\in \affh^*$.

Recall that 
a weight $\lam\in \dual{\affh}$ is called {\em admissible} \cite{KacWak89} if
\begin{enumerate}
 \item $\lam$ is regular dominant, that is,
$\bra \lam+\affrho,\alpha\che\ket \not \in \{0,-1,-2,\dots\}$ for all $\alpha\in \widehat{\Delta}^{re}_+$;
 \item $\Q\wh\Delta(\lam)=\Q \wh{\Delta}^{re}$.
\end{enumerate}
The simple highest weight representation $\widehat{L}_k(\lam)$  called admissible if
$\widehat{\lam}$ is admissible.

A complex number $k$ is called 
  admissible (for $\affg$) 
  if the affine vertex algebra
  $\Vs{k}$ is admissible as an $\affg$-module.
  (Note that $\Vs{k}\cong \widehat{L}_k(0)$).
\begin{Pro}[\cite{KacWak89,KacWak08}]
\label{Pro:admissible number}
The number  $k$ is admissible if and only if 
 \begin{align*}
  k+h\che=\frac{p}{q}
 \quad \text{with }p,q\in \N,\ (p,q)=1,\ p\geq 
\begin{cases}
h\che&\text{if }(r\che, q)=1\\
h&\text{if }(r\che,q)=r\che,
\end{cases}
 \end{align*}
where 
$h$  is
 the Coxeter number of $\fing$.
If this is the case
we have
 $\wh{\Pi}(k\Lam_0)=\{\dot{\alpha_0},\alpha_1,\alpha_2,\dots,\alpha_l\}$,
where 
\begin{align*}
  \dot{\alpha_0}= 
\begin{cases}
-\theta+q\delta&\text{if }(r\che,q)=1
\\
 -\theta_s+\frac{q}{r\che}\delta&\text{if }(r\che ,q)=r\che.
\end{cases}
\end{align*}
\end{Pro}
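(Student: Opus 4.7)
The plan is to directly verify the two admissibility conditions (regular dominance and $\Q\wh\Delta(\lam)=\Q\wh\Delta^{re}$) for $\lam = k\Lam_0$ and translate them into arithmetic constraints on $p,q$ with $k+h\che=p/q$.

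First I would work out the integrality condition. For a real root $\beta = \alpha + n\delta$ with $\alpha\in\Delta$, one has $\beta\che = \alpha\che + \frac{2n}{(\alpha,\alpha)}K$, so
$$\bra k\Lam_0 + \affrho,\beta\che\ket = \bra \rho,\alpha\che\ket + \frac{2n(k+h\che)}{(\alpha,\alpha)} = \bra \rho,\alpha\che\ket + \frac{2np}{q(\alpha,\alpha)}.$$
Since $\bra\rho,\alpha\che\ket\in\Z$, the root $\beta$ lies in $\wh\Delta(k\Lam_0)$ iff $\frac{2np}{q(\alpha,\alpha)}\in\Z$. Because $(p,q)=1$ this becomes $q\mid n$ for long roots and $q\mid nr\che$ for short roots. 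The cases $\gcd(q,r\che)\in\{1,r\che\}$ (the only ones possible, as $r\che\in\{1,2,3\}$) therefore split as claimed: in Case 1 all roots require $q\mid n$, giving $\wh\Delta(k\Lam_0)=\{\alpha+nq\delta:\alpha\in\Delta,\,n\in\Z\}$, a root system of full rank isomorphic to the untwisted affine system of $\fing$; in Case 2, writing $q=r\che q'$, long roots use spacing $q$ and short roots use spacing $q'$, producing a full-rank system of the twisted affine type. In either case condition (2) of admissibility holds automatically.

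Next I would read off $\wh\Pi(k\Lam_0)$. The finite simple roots $\alpha_1,\dots,\alpha_l$ sit in $\wh\Delta(k\Lam_0)_+$ at level $n=0$, and one obtains a full base by adjoining the unique lowest positive element of $\wh\Delta(k\Lam_0)_+$ of the form $-\mu+m\delta$ with $\mu\in\Delta_+$. In Case 1 this is $-\theta+q\delta$, and in Case 2 it is the short-root analogue $-\theta_s+q'\delta=-\theta_s+(q/r\che)\delta$ because short roots have the finer spacing $q'<q$. This gives exactly the claimed $\dot\alpha_0$.

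Finally I would impose regular dominance. Because $\wh\Delta(k\Lam_0)_+$ is the positive cone on $\wh\Pi(k\Lam_0)$, it suffices to check positivity of $\bra k\Lam_0+\affrho,\alpha\che\ket$ on the simple roots (and to note that for $\beta\in\wh\Delta^{re}_+\setminus\wh\Delta(k\Lam_0)$ the pairing is automatically non-integral, hence not in $\Z_{\leq 0}$). On each $\alpha_i$ the pairing equals $\bra\rho,\alpha_i\che\ket=1$, so no constraint arises. On $\dot\alpha_0$ one computes, using $\bra\rho,\theta\che\ket=h\che-1$ in Case 1 and $\bra\rho,\theta_s\che\ket=h-1$ in Case 2, the values $p-h\che+1$ and $p-h+1$ respectively; regularity requires these to be $\geq 1$, giving $p\geq h\che$ and $p\geq h$. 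Conversely, if these bounds hold the same calculation reverses to give (1) on all of $\wh\Pi(k\Lam_0)$, completing the equivalence. The main subtlety is the correct determination of $\wh\Pi(k\Lam_0)$ in Case 2, where the mismatched spacings $q$ and $q'$ demote $-\theta$ below $-\theta_s$ in the order determined by $\delta$-coefficient; once the two cases are separated along $\gcd(r\che,q)$, however, the remainder is direct bookkeeping.
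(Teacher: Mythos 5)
The paper does not prove this proposition; it is stated with a citation to Kac--Wakimoto, so there is no in-paper argument to compare against. Your derivation is the standard one and is essentially correct: the computation
\[
\bra k\Lam_0+\affrho,(\alpha+n\delta)^{\vee}\ket=\bra\rho,\alpha^{\vee}\ket+\tfrac{2np}{q(\alpha,\alpha)}
\]
correctly reduces integrality to $q\mid n$ (long roots) and $q\mid nr^{\vee}$ (short roots), which reproduces \eqref{eq:integral-roots}; the identification of $\dot\alpha_0$ is right; and the pairings $\bra\rho,\theta^{\vee}\ket=h^{\vee}-1$, $\bra\rho,\theta_s^{\vee}\ket=h-1$ yield exactly $p\geq h^{\vee}$ resp.\ $p\geq h$ from regular dominance evaluated at $\dot\alpha_0$.

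Two places deserve tightening. First, the ``only if'' direction presumes $k+h^{\vee}=p/q$ from the outset; you should note that condition (2) (full rationality of $\wh\Delta(k\Lam_0)$) forces $\wh\Delta(k\Lam_0)$ to contain some root with nonzero $\delta$-coefficient, which in view of your formula is possible only if $k+h^{\vee}\in\Q_{>0}$ (positivity then being subsumed by $p\geq h^{\vee}$). Second, ``the unique lowest positive element of the form $-\mu+m\delta$'' is not quite the right characterization of the affine node of $\wh\Pi(k\Lam_0)$; the clean statement is that $\dot\alpha_0$ is the unique indecomposable element of $\wh\Delta(k\Lam_0)_+$ with nonzero $\delta$-coefficient, and one then observes there are $l+1$ indecomposables in a rank-$(l+1)$ system. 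Indecomposability of $-\theta_s+(q/r^{\vee})\delta$ in Case 2 follows because a putative decomposition would force a short root of height strictly less than that of $-\theta_s$. Lastly, your closing remark that positivity on $\wh\Pi(k\Lam_0)$ propagates to all of $\wh\Delta(k\Lam_0)_+$ is correct but uses implicitly that $\beta\mapsto\beta^{\vee}$ takes $\wh\Delta(k\Lam_0)_+$ to the positive coroots of the dual root system, whose simple coroots are $\gamma_i^{\vee}$ for $\gamma_i\in\wh\Pi(k\Lam_0)$; worth a sentence. None of these is a genuine gap, just places where a referee would ask for a line or two more.
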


For an admissible number $k$,
let $Pr_k$ be the set of admissible weights
$\lam$ such that
there exists $y\in \eW$ satisfying
 $\wh{\Delta}(\lam)=y( \wh{\Delta}(k\Lam_0))$.
Set
\begin{align}
 Pr_{k,\Z}=Pr_k\cap \affP,
\end{align}
where
\begin{align}
 \affP=\{\lam\in \dual{\affh};
\lam(K)=k,\ \bra \lam,\alpha_i\che\ket \in \Z
\ \text{for all }i=1,\dots,l \}.
\label{eq:affP}
\end{align}
Then
\begin{align}
 Pr_{k,\Z}=
\begin{cases}
 \{\lam\in \affP;
\bra \lam,\alpha_i\che \ket \geq 0
\ \text{for $i=1,\dots, l$,\ }\bra \lam,\theta\ket\leq
 p-h\che\}&\text{if }
(r\che
 ,q)=1,\\
\{\lam\in \affP; \bra \lam,\alpha_i\che \ket \geq 0
\ \text{for $i=1,\dots, l$,\ }\bra \lam,\theta_s\che \ket \leq
 p-h\}&\text{if }
(r\che ,q)=r\che.\end{cases}
\label{eq:Pr+}
\end{align}
Note that
\begin{align}
 Pr_{k,\Z}\cong \begin{cases}
	      \widehat{P}_{+}^{p-h\che}&\text{if }(r\che,q)=1,\\
{}^L\widehat{P}_{+}^{\vee,p-h}&\text{if }(r\che,q)=r\che,
	     \end{cases}
\end{align}
where $\widehat{P}_{+}^m$
is the set of  level $m$  integral dominant weights of $\affg$,
and ${}^L\widehat{P}_{+}^{\vee,m}$ is the set of level $m$ integral dominant
coweights
of the affine Kac-Moody  algebra $\widehat{{}^L\fing}$
associated with the Langlands dual Lie algebra ${}^L\fing$.

For 
$\lam\in Pr_{k,\Z}$
we have \begin{align}\label{eq:integral-roots}
 \widehat{\Delta}(\lam)
=\begin{cases}
			 \{\alpha+nq\delta; \alpha\in \Delta,\ n\in
			 \Z\}&\text{if }(q,r\che)=1,\\
\{\alpha+nq\delta;\alpha\in \Delta_{long}\}\sqcup
			 \{\alpha+\frac{nq}{r\che}\delta; \alpha\in\
			 \Delta_{short}, n\in \Z\}
&\text{if }(q,r\che)=r\che,
			\end{cases}
\end{align}
where $\Delta_{long}$ (resp.\ $\Delta_{short}$) is  the set  of
long roots
(resp.\ short roots) of $\fing$.
It follows that
\begin{align}
 \widehat{W}(\lam)=\begin{cases}
					 W\ltimes q Q\che&\text{if
					 }(q,r\che)=1,\\
W\ltimes q Q&\text{if }(q,r\che)=r\che
					\end{cases}
\end{align}
for $\lam\in Pr_{k,\Z}$.
In particular
\begin{align}
 \widehat{W}(\lam)\cong \begin{cases}
					 \widehat{W}&\text{if
					 }(q,r\che)=1,\\
{}^L \widehat{W}&\text{if }(q,r\che)=r\che,
					\end{cases}
\end{align}
for $\lam\in Pr_k$,
where ${}^L\widehat{W}$ is the Weyl group of $\widehat{{}^L\fing}$.

If $k$ is an admissible number with denominator $q$ such that
 $(q,r^{\vee})=1$,
we have  \cite{KacWak89}
\begin{align}
 Pr_k=\bigcup_{y\in \eW
\atop y(\wh{\Delta}(k\Lam_0)_+)\subset 
\wh{\Delta}^{re}_+}Pr_{k,y},
\quad Pr_{k,y}:=y\circ Pr_{k,\Z},
\label{eq:KW-description-of-admissible-weights}
\end{align}
Moreover,
\begin{align}
Pr_{k,y}\cap Pr_{k,y'}\ne \emptyset
\iff Pr_{k,y}= Pr_{k,y'}\iff y'=yt_{q\varpi_j}w_j
\label{eq:Pr=Pr}
\end{align}
with some $j\in J$
for $y,y'\in \eW$ such that 
$y(\wh{\Delta}(k\Lam_0)_+),
y'(\wh{\Delta}(k\Lam_0)_+)\subset 
\wh{\Delta}^{re}_+$.

 \subsection{Admissible representations and nilpotent orbits}
 \label{subsection:Admissible representations and nilpotent orbits}
 Let $k$ be  an admissible number for $\affg$.

 We extend the concept of admissibility to 
 modules over $\fing$. Namely, we say
 that a $\fing$-module $M$
is {\emph{admissible}} of level $k$ if 
 $\mathbf{L}(M)$ is an $\Vs{k}$-module,
 or equivalently,
 $M$ is an $A(\Vs{k})$-module.
 Thus,
$M\mapsto \mathbf{L}(M)$ 
gives a one-to-one correspondence
between 
the set of the isomorphism classes of admissible $\fing$-modules of level $k$
and that of simple positive energy representations of $\Vs{k}$.

Let 
 $$\overline{Pr_k}=\{\bar \lam\mid \lam\in Pr_k\}\subset \h^*,$$
where 
$\bar \lam$ is the projection of $\lam$ to $\h^*$.

  \begin{Th}[\cite{A12-2}]\label{Th:classification-of-admissible-modules}
  Let $k$ be admissible,
$\lam\in \h^*$.
Then $\widehat{L}_k(\lam)$ is a module over $\Vs{k}$ if and only if
  $\lam \in \overline{Pr_k}$.
  Equivalently, 
   $L(\lam)$ is a
  $\Zhu(\Vs{k})$-module
  if and only if  $ \lam\in \overline{Pr_k}$.
      \end{Th}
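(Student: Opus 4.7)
The plan is to route the assertion through Zhu's theorem and then split into sufficiency and necessity, with the necessity part resting on an associated-variety computation.

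First I would reduce to a statement about $A(V_k(\fing))$-modules. By Zhu's correspondence, $\widehat{L}_k(\lam)$ is a $V_k(\fing)$-module iff its top graded component $L(\lam)$ is a module over $A(V_k(\fing)) = U(\fing)/I_k$, i.e.\ iff $\mathrm{Ann}_{U(\fing)} L(\lam) \supset I_k$. This converts the affine question into a purely finite-dimensional one about which primitive ideals of $U(\fing)$ contain $I_k$.

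For the sufficiency direction, given $\lam \in \overline{Pr_k}$, choose $\mu \in Pr_k$ with $\bar\mu = \lam$. The plan is to show that every admissible $\widehat{L}_k(\mu)$ is automatically a $V_k(\fing)$-module, and then to extract $L(\lam)$ as the top component. Concretely, one writes the kernel of $V^k(\fing) \twoheadrightarrow V_k(\fing)$ as a $\widehat{\fing}$-submodule generated by a singular vector whose highest weight is determined by the Kac-Wakimoto/Malikov-Feigin-Fuchs formula at the admissible level $k+h^\vee = p/q$. The regular dominance and the integrality condition $\Q\wh\Delta(\mu) = \Q\wh{\Delta}^{re}$ that defines $Pr_k$ force this singular vector to act by zero on $\widehat{L}_k(\mu)$, by a direct check using the character/embedding pattern of the corresponding Verma module.

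The necessity direction is where the real work lies. My plan would be to use the associated variety $X_{V_k(\fing)} \subset \fing^*$ of the vertex algebra, which for admissible $k$ is (as shown in \cite{A12-2}) the closure of a specific nilpotent orbit $\mathbb{O}_{p,q}$ whose size is controlled by the denominator $q$. If $L(\lam)$ is an $A(V_k(\fing))$-module then $\mathrm{Ann}_{U(\fing)} L(\lam) \supset I_k$, so after passing to associated graded one obtains $V(\mathrm{Ann}\, L(\lam)) \subset X_{V_k(\fing)}$. By Joseph's theorem the left-hand side is the closure of a single nilpotent orbit, which constrains the central character of $\lam$. The remaining step is to combine this orbit constraint with the Kac-Kazhdan criterion governing which highest weights yield that primitive central character, and match the result against the explicit description \eqref{eq:Pr+} of $Pr_{k,\Z}$ (and its $\widetilde{W}$-translates in \eqref{eq:KW-description-of-admissible-weights}) to conclude $\lam \in \overline{Pr_k}$.

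The main obstacle is precisely this last matching: the associated-variety inclusion is a coarse condition cut out by a central character and an orbit, while $\overline{Pr_k}$ is defined by a delicate interplay of integrality of $\wh\Delta(\lam)$ and dominance with respect to a specific alcove. Closing that gap requires the detailed orbit-by-orbit analysis via BRST/quantum Hamiltonian reduction to the minimal $W$-algebra, using the rationality of $\W_k(\fing, f_\theta)$ to rule out any $\lam$ not of Kac-Wakimoto type; I would expect to invoke the BRST functor $H^{\infty/2+\bullet}_{f_\theta}(-)$ precisely here, converting nonvanishing of the reduction into the admissibility of $\wh\lam$.
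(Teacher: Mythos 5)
The paper does not prove Theorem~\ref{Th:classification-of-admissible-modules}; it is the classification result of \cite{A12-2} (resolving the Adamovic--Milas conjecture) and is quoted here as an input, so there is no in-text argument to compare against. That said, your sketch captures the correct high-level architecture of the actual proof in \cite{A12-2}: pass through Zhu's theorem to a question about which primitive ideals of $U(\fing)$ contain $I_k$, use the computation $\on{Var}(I_k)=\overline{\mathbb{O}_q}$ of \cite{A2012Dec} (Theorem~\ref{Th;subsquence} here), and bridge the remaining gap by quantum Hamiltonian reduction $H^{\infty/2+\bullet}_{f_\theta}(-)$ to the minimal $W$-algebra, whose rationality at admissible level is the decisive input.

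Two genuine gaps remain, one in each direction. For sufficiency, you propose a ``direct check using the character/embedding pattern'' that the singular vector generating $\ker(\Vg{k}\twoheadrightarrow\Vs{k})$ kills $\widehat{L}_k(\mu)$ for $\mu\in Pr_k$. Even granting that a single generating singular vector exists (a nontrivial fact related to the two-sided BGG resolution of \cite{A-BGG}), this ``direct check'' was precisely the obstruction that confined the Adamovic--Milas method to $\mathfrak{sl}_2$; in \cite{A12-2} this direction too goes through the $W$-algebra reduction rather than an explicit MFF-type computation, and your phrasing understates where the difficulty sits. For necessity, you derive $\on{Var}(J_\lam)\subseteq\overline{\mathbb{O}_q}$ together with a central-character constraint and propose to match against \eqref{eq:Pr+}--\eqref{eq:KW-description-of-admissible-weights}. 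But these conditions are strictly weaker than $I_k\subseteq J_\lam$: containment of associated varieties does not give containment of ideals, and a central character fixes $\lam$ only up to the full Weyl-group dot-orbit without pinning down the integral alcove. You rightly name the reduction to $\W_k(\fing,f_\theta)$ as what closes this gap, but it cannot be deferred to an afterthought: that reduction, combined with the rationality of the minimal $W$-algebra and an induction through the Levi/BRST hierarchy as in Theorem~\ref{Th:restriction}, is where essentially all the weight of the theorem is carried.
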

Theorem \ref{Th:classification-of-admissible-modules} in particular says that
any $\Zhu(\Vs{k})$-module in  the category $\BGG$ is completely reducible since
all elements of  $\overline{Pr_k}$ are regular dominant, that is,
$\bra \lam+\rho,\alpha^{\vee}\ket\not \in \{0,-1,-2,-3,\dots\}$ for all $\alpha\in \Delta_+$.

For $\lam\in \h^*$,
 let $$J_{\lam}=\on{Ann}_{U(\g)}L(\lam).$$ 
 By Duflo's Theorem \cite{Duf77}, one knows that
for
any primitive ideal $I$
(that is, the annihilator of some simple module) of $U(\fing)$ there exists $\lam\in \h^*$ such that
$I=J_{\lam}$.

Thus, we obtain following assertion immediately from Theorem \ref{Th:classification-of-admissible-modules}.
\begin{Co}
Let $k$ be an admissible number,
 $M$ a simple $U(\fing)$-module.
 Then $M$ is an admissible $\g$-module of level $k$ if and only if 
$\on{Ann}_{U(\fing)}M=J_{ \lam}$ for some 
$ \lam\in \overline{Pr_k}$.
\end{Co}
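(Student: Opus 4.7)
The plan is to reduce the statement to Theorem \ref{Th:classification-of-admissible-modules} via Duflo's theorem and the definition of admissibility. The key observation is that, by construction, a $U(\fing)$-module $M$ is admissible of level $k$ if and only if it factors through the quotient $U(\fing)/I_k\cong\Zhu(\Vs{k})$, i.e.\ if and only if $I_k\subseteq \on{Ann}_{U(\fing)}M$. So the corollary really amounts to identifying the primitive ideals of $U(\fing)$ that contain $I_k$ with the ideals $\{J_{\lam}\mid \lam\in\overline{Pr_k}\}$.

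For the ``if'' direction, I would assume $\on{Ann}_{U(\fing)}M=J_{\lam}$ with $\lam\in \overline{Pr_k}$. By Theorem \ref{Th:classification-of-admissible-modules}, $L(\lam)$ is a $\Zhu(\Vs{k})$-module, which means $I_k\subseteq \on{Ann}_{U(\fing)}L(\lam)=J_{\lam}=\on{Ann}_{U(\fing)}M$. Therefore $M$ is annihilated by $I_k$, hence is a module over $\Zhu(\Vs{k})$, and hence $M$ is admissible of level $k$.

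For the ``only if'' direction, I would assume $M$ is admissible of level $k$, so that $I_k\subseteq \on{Ann}_{U(\fing)}M$. Since $M$ is simple, $\on{Ann}_{U(\fing)}M$ is a primitive ideal, and by Duflo's theorem \cite{Duf77} (already cited in the excerpt) there exists $\lam\in \h^*$ with $\on{Ann}_{U(\fing)}M=J_{\lam}$. Then $I_k\subseteq J_{\lam}=\on{Ann}_{U(\fing)}L(\lam)$, meaning $L(\lam)$ is a $\Zhu(\Vs{k})$-module, so Theorem \ref{Th:classification-of-admissible-modules} forces $\lam\in \overline{Pr_k}$.

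This is essentially a bookkeeping argument and I do not expect any real obstacle: all the heavy lifting has been absorbed into Theorem \ref{Th:classification-of-admissible-modules} (the hard input from \cite{A12-2}) and Duflo's theorem. The only minor thing to be careful about is that $\lam$ produced by Duflo is only determined up to the equivalence $J_{\lam}=J_{\lam'}$, but that does not affect the statement since the condition ``$\lam\in \overline{Pr_k}$'' is only a condition on the ideal $J_{\lam}$ (via the iff in Theorem \ref{Th:classification-of-admissible-modules}).
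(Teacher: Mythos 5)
Your proof is correct and follows exactly the route the paper intends: the paper states this corollary as an immediate consequence of Theorem \ref{Th:classification-of-admissible-modules} together with Duflo's theorem (cited just above it), which is precisely the reduction you carry out. The reformulation of admissibility as $I_k\subseteq\on{Ann}_{U(\fing)}M$ and the observation that membership in $\overline{Pr_k}$ is detected only through the ideal $J_\lambda$ are exactly the two bookkeeping steps needed.
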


 Let $\mc{Z}(\fing)$ be the center of $U(\fing)$,
 and let $\chi_{\lam}:\mc{Z}(\fing)\ra \C$ be the evaluation at $L(\lam)$.
By \cite{Joseph:1979kq},
one knows that
 the correspondence $I\mapsto IM(\lam)$ 
gives an order-preserving injection between the set of two sided ideals 
of $U(\fing)$
containing 
$U(\fing)\ker \chi_{ \lam}$
 and submodules of $M(\lam)$,
 and in fact, this map is a bijection if $\lam$ is regular.

 \begin{Pro}\label{Pro:equiv-class-adm}
   For $\lam \in \overline{Pr_k}$,
  the primitive ideal $J_{\lam}$ is the unique maximal two-sided ideal of $U(\fing)$ containing 
  $U(\fing)\ker \chi_{\lam}$. In particular, 
for $\lam,\mu \in \overline{Pr_k}$,
  $J_{ \lam}=J_{ \mu}$ if and only if
  there exists $w\in W$ such that
  $ \mu=w\circ \lam$.
      \end{Pro}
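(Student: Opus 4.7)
The plan is to apply Joseph's bijection \cite{Joseph:1979kq} cited just before the statement. Since $\lam \in \overline{Pr_k}$ is regular, we have an order-preserving bijection between two-sided ideals $I \subseteq U(\fing)$ containing $U(\fing)\ker\chi_{\lam}$ and submodules of $M(\lam)$, given by $I \mapsto I M(\lam)$. The key observation is that $M(\lam)$ admits a \emph{unique} maximal proper submodule $N(\lam)$, because $L(\lam) = M(\lam)/N(\lam)$ is its unique simple quotient. Under Joseph's bijection, $N(\lam)$ corresponds to a unique maximal proper two-sided ideal $J$ containing $U(\fing)\ker\chi_{\lam}$.

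I then identify $J$ with $J_{\lam}$. First, $J_{\lam}$ is a proper two-sided ideal containing $U(\fing)\ker\chi_{\lam}$ (because $L(\lam)$ has central character $\chi_{\lam}$), so $J_{\lam} \subseteq J$ by maximality. Conversely, $J M(\lam) = N(\lam)$ gives $J \cdot L(\lam) = J \cdot (M(\lam)/N(\lam)) = 0$, whence $J \subseteq \on{Ann}_{U(\fing)} L(\lam) = J_{\lam}$. So $J = J_{\lam}$ is the unique maximal two-sided ideal containing $U(\fing)\ker\chi_{\lam}$, proving the first assertion.

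For the second assertion, the direction ($\Leftarrow$) is immediate: if $\mu = w\circ \lam$ with $w\in W$, then $\chi_{\mu} = \chi_{\lam}$ by Harish-Chandra, so both $J_{\lam}$ and $J_{\mu}$ are the unique maximal two-sided ideal containing $U(\fing)\ker\chi_{\lam} = U(\fing)\ker\chi_{\mu}$, hence equal. For ($\Rightarrow$), suppose $J_{\lam} = J_{\mu}$. Then this proper ideal contains both $U(\fing)\ker\chi_{\lam}$ and $U(\fing)\ker\chi_{\mu}$. If $\chi_{\lam} \ne \chi_{\mu}$, the maximal ideals $\ker\chi_{\lam}, \ker\chi_{\mu}$ of $\mc{Z}(\fing)$ would be coprime, giving $1 \in \ker\chi_{\lam} + \ker\chi_{\mu}$ and therefore $J_{\lam} \supseteq U(\fing)(\ker\chi_{\lam} + \ker\chi_{\mu}) = U(\fing)$, contradicting properness. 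So $\chi_{\lam} = \chi_{\mu}$, and Harish-Chandra's theorem yields $\mu = w\circ\lam$ for some $w \in W$.

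The only subtle point is the existence of a unique maximal submodule of $M(\lam)$; this follows from the fact that a Verma module always has a unique simple quotient (the classical argument via the sum of all proper submodules, which is proper because it avoids the cyclic highest weight vector). Regularity of $\lam$, which is guaranteed by $\lam \in \overline{Pr_k}$ as noted after Theorem \ref{Th:classification-of-admissible-modules}, is used solely to invoke Joseph's theorem as a bijection rather than merely an injection, so no additional difficulty arises there.
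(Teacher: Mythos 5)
Your proof is correct, and it takes a somewhat different route from the paper's. The paper identifies $J_\lam M(\lam)$ with the maximal submodule $N(\lam)$ by observing that $M(\lam)/J_\lam M(\lam)$ is a highest weight $\Zhu(\Vs{k})$-module, hence simple by the complete-reducibility consequence of Theorem~\ref{Th:classification-of-admissible-modules} noted just after it; this step uses the admissibility of $\lam$ in an essential way. You instead bypass the vertex-algebra structure entirely: you let $J$ be the ideal corresponding to $N(\lam)$ under Joseph's bijection, and pin down $J=J_\lam$ by a direct double inclusion ($J_\lam\subseteq J$ from maximality of $J$ and properness of $J_\lam$; $J\subseteq J_\lam$ from $JL(\lam)=0$). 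The result is a cleaner and strictly more general argument: it establishes the first assertion for \emph{any} regular dominant $\lam$, with admissibility entering only to guarantee this hypothesis. You also spell out the ``in particular'' direction with the infinitesimal-character comparison, which the paper leaves implicit. One small caveat worth flagging: the bijectivity of Joseph's map $I\mapsto IM(\lam)$ genuinely requires $\lam$ to be regular \emph{dominant}, not merely regular (for antidominant regular $\lam$ the Verma is simple and the map fails even to be injective); the paper's wording before the Proposition omits ``dominant,'' and your closing remark inherits this imprecision. Since elements of $\overline{Pr_k}$ are indeed regular dominant, as the paper records after Theorem~\ref{Th:classification-of-admissible-modules}, nothing is actually wrong, but the role of dominance deserves to be made explicit.
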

 \begin{proof}
Let $ \lam\in \overline{Pr_k}$.
Then $I_k\subset J_{\lam}$ 
by Theorem \ref{Th:classification-of-admissible-modules}.
(Recall that  $A(L_{k}(\fing))=U(\fing)/I_k$). Thus,
 $M( \lam)/J_{ \lam}$  is a highest weight
$A(L_{k}(\fing))$-module,
and therefore,
it must be isomorphic to $L( \lam)$ by Theorem \ref{Th:classification-of-admissible-modules}.
This means that
 $J_{ \lam}M( \lam)$ is the unique  maximal submodule of 
$M( \lam)$,
and therefore,
 $J_{\lam}$ is the unique maximal two-sided ideal of $U(\fing)$ containing 
  $U(\fing)\ker \chi_{ \lam}$. 
 \end{proof}

 For a two-sided ideal $I$ of $U(\fing)$,
let $\on{Var}(I)$ be the {\em associated  variety} of $I$,
that is,
the zero locus of 
 $\gr I$ in $\fing^*$,
 where 
 $\gr I$ is the associated graded with respect to the filtration induced from the PBW filtration of $U(\fing)$.
 By Joseph's Theorem \cite{Jos85},
 for a primitive ideal $I$ 
we have
$\on{Var}(I)=\overline{\mathbb{O}}$  for some nilpotent orbit $\mathbb{O}$ of $\fing$.

We say a simple $\fing$-module $M$ is {\em in the orbit $\mathbb{O}$}  if
$\on{Var}(\on{Ann}_{U(\fing)}M)=\overline{\mathbb{O}}$.
 
 \begin{Th}[\cite{A2012Dec}]\label{Th;subsquence}
  Let $k$ be an admissible number for $\affg$ with denominator $q\in \N$.
  There exists
  a nilpotent orbit
  $\mathbb{O}_q$ that depends only on $q$  such that
  \begin{align*}
   \on{Var}(I_k)=\overline{\mathbb{O}_q}.
  \end{align*}
  Explicitly,
we have
\begin{align*}
\overline{\mathbb{O}}_q
= \begin{cases}
   \{x\in \fing; (\ad x)^{2q}=0\}&\text{if }(r\che, q)=1,\\
\{x\in \fing; \pi_{\theta_s}(x)^{2q/r\che}=0\} &\text{if }(r\che,q)=r\che,
\end{cases}\end{align*}
where $r\che$ is the lacing number of $\fing$,
$\theta_s$ is the highest short root of $\fing$
and $\pi_{\theta_s}$ is the simple finite-dimensional
representation of $\fing$
with highest weight $\theta_s$.
 \end{Th}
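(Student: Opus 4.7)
The plan is to translate the statement about $\on{Var}(I_k)\subset \g^*$ into one about the \emph{associated variety of the vertex algebra} $V=\Vs{k}$, and then to determine the latter by combining the explicit singular vector that generates the maximal submodule of $V^k(\g)$ with the existence of sufficiently many admissible $\g$-modules. Recall Zhu's Poisson $C_2$-algebra $R_V=V/C_2(V)$ and its spectrum $X_V=\Spec R_V$. Equipping $V$ with Li's canonical decreasing filtration induces a compatible filtration on the Zhu algebra $\Zhu(V)$ for which $\gr \Zhu(V)\cong R_V$ as Poisson algebras. Because $R_{V^k(\g)}=\C[\g^*]$, the surjection $\Vg{k}\twoheadrightarrow \Vs{k}$ realises $X_{\Vs{k}}$ as a closed Poisson subscheme of $\g^*$, and one gets the identification $X_{\Vs{k}}^{\mathrm{red}}=\on{Var}(I_k)$. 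Thus it suffices to determine $X_{\Vs{k}}^{\mathrm{red}}$.

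Next, observe that $X_{\Vs{k}}$ is conical and $G$-invariant (the adjoint action integrates the Poisson $\g$-action) and is contained in the nilpotent cone $\mathcal N$: under the Harish-Chandra isomorphism the central character on the vacuum vector coincides with that of $L(k\Lam_0)$, so every positive-degree element of $\C[\g^*]^G=\on{Sym}(\g)^G$ vanishes in $R_{\Vs{k}}$. Consequently $X_{\Vs{k}}^{\mathrm{red}}$ is a union of closures of nilpotent $G$-orbits. The upper bound $X_{\Vs{k}}^{\mathrm{red}}\subseteq \overline{\mathbb{O}_q}$ then follows from a direct symbol computation: the maximal submodule of $\Vg{k}$ is generated by the singular vector attached to the affine simple root $\dot\alpha_0$ of Proposition \ref{Pro:admissible number}, and the Malikov--Feigin--Fuchs formula shows that its image in $R_{\Vs{k}}\subseteq \C[\g^*]$ is, up to a nonzero scalar, $f_\theta^{2q}$ when $(r\che,q)=1$, respectively a corresponding power of a lowest-weight entry of the matrix $\pi_{\theta_s}(x)$ when $(r\che,q)=r\che$. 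Saturating this single relation under the $G$-action yields the defining ideal of $\{x:(\ad x)^{2q}=0\}$, respectively of $\{x:\pi_{\theta_s}(x)^{2q/r\che}=0\}$, and a classical check using Jacobson--Morozov and invariant theory identifies each of these varieties with the closure of a single nilpotent orbit, namely $\overline{\mathbb{O}_q}$.

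For the reverse inclusion, choose an admissible weight $\lam \in \overline{Pr_k}$ such that $\on{Var}(J_\lam)=\overline{\mathbb{O}_q}$; such a $\lam$ can be produced from the Kac--Wakimoto parametrisation \eqref{eq:KW-description-of-admissible-weights} using Joseph's description of the associated variety of a primitive ideal with integral central character, or alternatively by exhibiting a nonzero simple module over the simple minimal affine $\W$-algebra at the corresponding admissible level and pulling it back through inverse quantum Drinfeld--Sokolov reduction. Theorem \ref{Th:classification-of-admissible-modules} then gives $I_k\subseteq J_\lam$, whence $\overline{\mathbb{O}_q}=\on{Var}(J_\lam)\subseteq \on{Var}(I_k)=X_{\Vs{k}}^{\mathrm{red}}$, finishing the argument. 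The main technical hurdle is the symbol computation in the second paragraph: checking that the $G$-saturation of the image of the Malikov--Feigin--Fuchs vector is \emph{exactly} the defining ideal of $\overline{\mathbb{O}_q}$ rather than a strictly larger or smaller ideal requires the explicit monomial form of that singular vector, together with the classical invariant theory of $G$ acting on $\g$ and on the representation $\pi_{\theta_s}$.
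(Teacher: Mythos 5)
The paper does \emph{not} prove Theorem~\ref{Th;subsquence}: it is quoted verbatim from \cite{A2012Dec} (and the explicit description of $\mathbb{O}_q$ appears as Theorem~9.5 of \cite{Ara09b}), so there is no internal proof against which to compare. Your outline does capture the broad shape of Arakawa's argument --- pass from $\on{Var}(I_k)$ to the associated variety $X_{\Vs{k}}$, show it is a $G$-invariant conical subvariety of the nilpotent cone, then identify the orbit, and finally obtain the reverse inclusion from a module with the right annihilator --- but the crucial step in the middle contains two concrete errors.

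First, the asserted symbol is wrong. The MFF singular vector generating the maximal submodule of $\Vg{k}$ (case $(r^\vee,q)=1$) has weight $k\Lam_0 - N\dot\alpha_0$ with $N=p-h^\vee+1$ and $\dot\alpha_0=-\theta+q\delta$, hence $\delta$-degree $qN=q(p-h^\vee+1)$ and finite $\h$-weight $+N\theta$. Its image in $R_{\Vg{k}}=\C[\fing^*]$ (note: $R_{\Vs{k}}$ is a \emph{quotient} of $\C[\fing^*]$, not a subalgebra, and the image there is zero) is therefore, if nonzero, a polynomial of degree $q(p-h^\vee+1)$ and weight $N\theta$, whereas $f_\theta^{2q}$ has degree $2q$ and weight $-2q\theta$ --- even the sign of the $\h$-weight is off. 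The two agree only in degenerate coincidences, so ``up to a nonzero scalar, $f_\theta^{2q}$'' cannot be correct.

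Second, even granting some power of a root vector as the symbol, the $G$-saturation step would fail. The ideal generated by the $G$-orbit of $e_\theta^m$ (for any $m\geq 1$) has as its zero locus $\bigcap_{g\in G}\{x: (g\cdot e_\theta)(x)=0\}=\{0\}$, since the $G$-translates of a root vector span $\fing$. That is far smaller than $\overline{\mathbb{O}_q}$, so the proposed saturation argument overshoots the upper bound and would (incorrectly) force $C_2$-cofiniteness. Arakawa's actual identification of the orbit goes through the Kac--Wakimoto character of $L_k(\fing)$ (which determines the Gelfand--Kirillov dimension of $\Vs{k}$ and hence $\dim X_{\Vs{k}}$), together with the non-vanishing of BRST reductions $H^0_{DS,f}(\Vs{k})$ for suitable nilpotents $f$ --- not through a $G$-saturation of a single monomial relation. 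One further, smaller point: the identification $\on{Var}(I_k)=X_{\Vs{k}}^{\mathrm{red}}$ is itself a theorem (the natural map $R_V\twoheadrightarrow\gr A(V)$ is a surjection in general, and showing it induces an equality of reduced zero loci for $\Vs{k}$ takes work); your first paragraph should not treat it as automatic.
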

See \cite{Ara09b} for a more concrete description of the orbits $\mathbb{O}_q$.

 Let $q\in \N$ be the denominator $q$
 of $k$.
For a nilpotent orbit $\mathbb{O}$ of $\fing$,
 set
 \begin{align}
  \overline{Pr}_k^{\mathbb{O}}=\{\lam\in \overline{Pr}_k\mid \on{Var}(J_{ \lam})=\overline{\mathbb{O}}\}.
 \end{align}
 For $\lam\in \overline{Pr}_k$
 we have 
 $\on{Var}(J_{\lam})\subset \on{Var}(I_k)=\overline{\mathbb{O}_q}$
since $I_k\subset J_{\lam}$.
Hence
\begin{align}
 \overline{Pr}_k=\bigsqcup_{\mathbb{O}\subset \overline{\mathbb{O}_q}}\overline{Pr}_k^{\mathbb{O}}.
\end{align}
Note that
\begin{align}
 \overline{Pr}_k^{\{0\}}=\overline{Pr}_{k,\Z}:=\{\bar \lam\mid \lam\in Pr_{k,\Z}\}.
\end{align}

We define an equivalence relation in  
$\overline{Pr}_k$ by 
\begin{align}
\text{$\lam\sim \mu$ $\iff$ there exists $w\in W$ such that $\mu=w\circ \lam$.
}
\end{align}
 Set
   \begin{align}
[\overline{Pr}_k]=\overline{Pr}_k/\sim.
  \end{align}
   By Proposition \ref{Pro:equiv-class-adm},
 $J_{\lam}$ depends only on the class of $\lam\in {\overline{Pr}_k}$ in $[\overline{Pr}_k]$.
 We have
 \begin{align}
  [\overline{Pr}_k]=\bigsqcup_{\mathbb{O}\subset \overline{\mathbb{O}_q}}[\overline{Pr}_k^{\mathbb{O}}],
 \end{align}
 where $[\overline{Pr}_k^{\mathbb{O}}]$ is the image of $\overline{Pr}_k^{\mathbb{O}}$ in $  [\overline{Pr}^k]$.
 
 We conclude that
 a simple $U(\fing)$-module $M$ in the orbit $\mathbb{O}$ is an admissible $\fing$-module of level $k$
 if and only if $\on{Ann}_{U(\fing)}M=J_{\lam}$ for some $\lam\in [\overline{Pr}_k^{\mathbb{O}}]$.
 

  \subsection{Principal nilpotent orbit}
  Let $k$ be an admissible number with denominator $q\in \N$,
  and let
 $\mathbb{O}_{prin}$ be
the principal nilpotent orbit,  that is, the unique dense  orbit in the nilpotent  cone $\mc{N}$ of $\fing^*$.
We have
$\on{Var}(J_{\lam})=\overline{\mathbb{O}_{prin}}=\mc{N}$
if and only if $$\on{Ann}_{U(\fing)}{L( \lam)}=U(\fing)\ker \chi_{ \lam}.$$
Let $\lam \in \overline{Pr}_k$.
Since admissible weights are regular dominant,
 $\lam \in \overline{Pr}_k^{\mathbb{O}_{prin}}$ 
 if and only if 
 $\widehat{\lam}=\lam+k\Lam_0$ is  {\em non-degenerate} in the sense of \cite{FKW92},
that is,
$\widehat{\Delta}(\lam)\cap \Delta=\emptyset$,
so that 
$L( \lam)=M( \lam)$.

Clearly, the set $ \overline{Pr}_k^{\mathbb{O}_{prin}}$ is non-empty 
if and only if 
$\mathbb{O}_q=\mathbb{O}_{prin}$.
By Theorem \ref{Th;subsquence}
this holds if and only if 
$$q\geq \begin{cases}
h&\text{if $(r^{\vee},q)=1$},\\
r^{\vee}{}^L h^{\vee}&\text{if $(r^{\vee},q)=r^\vee$},
\end{cases}$$
where $h$ is the  Coxeter number of $\g$ and ${}^L h^{\vee}$ is the dual Coxeter number of 
the Langlands dual ${}^L\fing$ of $\fing$.
\begin{Pro}(\cite{FKW92})\label{Pro:principal}
 Let $k$ be an admissible number for $\affg$, $q\in \N$ the denominator of $k$.
Suppose that $(r^{\vee},q)=1$ and $q\geq h$.
We have a bijection
\begin{align*}
\begin{array}{ccc}
(\widehat{P}^{p-h^{\vee}}_+ \times \widehat{P}_+^{\vee,q-h})/\eW_+ & \isomap & [\overline{Pr}_k^{\mathbb{O}_{prin}}],\\
\quad [(\lam,\mu)]& \mapsto  &[\bar \lam-\frac{p}{q}(\bar \mu+\rho^{\vee} ) ],
\end{array}
\end{align*}
where 
$\eW_+$ acts diagonally on $\widehat{P}^{p-h^{\vee}}_+ \times {}^L\widehat{P}_+^{q-h}$.

\end{Pro}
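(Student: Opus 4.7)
\emph{Proof plan.} The strategy, following Frenkel--Kac--Wakimoto, is to refine the Kac--Wakimoto parametrisation \eqref{eq:KW-description-of-admissible-weights} by splitting $y\in\widetilde W=W\ltimes P^\vee$ into a finite-Weyl factor and a translation part $t_\nu$, and to read off the second parameter $\mu$ from $\nu$. By the discussion preceding the statement, $\lambda\in\overline{Pr}_k^{\mathbb O_{prin}}$ iff $\widehat\lambda=\lambda+k\Lambda_0$ is non-degenerate, i.e.\ $\widehat\Delta(\widehat\lambda)\cap\Delta=\emptyset$. Writing $\widehat\lambda=y\circ\widehat\lambda_\Z$ with $\widehat\lambda_\Z\in Pr_{k,\Z}$ and $y=w\,t_\nu$, and using $(r^\vee,q)=1$ to identify $\widehat\Delta(k\Lambda_0)=\{\alpha+nq\delta:\alpha\in\Delta,n\in\Z\}$ from \eqref{eq:integral-roots}, the identity $t_\nu(\alpha+nq\delta)=\alpha+(nq-(\alpha,\nu))\delta$ converts non-degeneracy into
\[
(\alpha,\nu)\notin q\Z\qquad\text{for all }\alpha\in\Delta.
\]

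Modding out $y$ by the right action of $\widehat W=W\ltimes qQ^\vee$ (which leaves $Pr_{k,y}$ unchanged up to the $W$-linkage that will be absorbed by $[\,\cdot\,]$) and applying a standard fundamental-domain argument, each valid $\nu$ acquires a unique representative in the open region $\{\nu\in P^\vee:(\alpha_i,\nu)>0,\,(\theta,\nu)<q\}$. Setting $\nu=\bar\mu+\rho^\vee$ and using $(\theta,\rho^\vee)=h-1$ together with $q\geq h$ puts this region in bijection with $\widehat P_+^{\vee,q-h}$. Combined with $Pr_{k,\Z}\cong\widehat P_+^{p-h^\vee}$ from \eqref{eq:Pr+}, this produces the pair $(\lambda,\mu)$. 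A direct computation of the $\h^*$-component of $y\circ\widehat\lambda_\Z$, using $k+h^\vee=p/q$ and the explicit action of $t_\nu$ on a level-$k$ weight, then extracts precisely the formula $\bar\lambda-\tfrac{p}{q}(\bar\mu+\rho^\vee)$ stated in the proposition.

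For the residual quotient, I invoke \eqref{eq:Pr=Pr}: $y$ and $y\,t_{q\varpi_j}w_j$ yield the same $Pr_{k,y}$, and tracking this ambiguity through the parametrisation shows it becomes the \emph{diagonal} action of $\widetilde W_+=\{t_{\varpi_j}w_j:j\in J\}$ on $\widehat P_+^{p-h^\vee}\times\widehat P_+^{\vee,q-h}$ (acting on the first factor by the length-zero Dynkin-diagram automorphisms in (\ref{eq:elements-of-eW-of-Dynkin-auto}), and on the second by translation and the $w_j$ reflection modulo $Q^\vee$). Proposition~\ref{Pro:equiv-class-adm} absorbs the remaining $W$-linkage on the target into the brackets $[\,\cdot\,]$, giving the claimed bijection.

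The main obstacle is the bookkeeping in the last two paragraphs: correctly matching the $\widehat W$-quotient on the $\nu$-side with the finite $W$-linkage on the $\overline{Pr}_k$-side, keeping track of the $\rho^\vee$-shift that turns a naive level-$q$ regular alcove into $\widehat P_+^{\vee,q-h}$, and verifying that $\widetilde W_+$ really acts diagonally (rather than independently) on the two factors. The hypothesis $q\geq h$ is used precisely so that the open region for $\nu$ is non-empty and $\widehat P_+^{\vee,q-h}$ makes sense, while $(r^\vee,q)=1$ is what produces the simple form of $\widehat\Delta(k\Lambda_0)$ used throughout.
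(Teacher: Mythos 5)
The paper does not prove Proposition~\ref{Pro:principal}; it simply cites it to Frenkel--Kac--Wakimoto \cite{FKW92}. So there is no ``paper's own proof'' here to compare against, only the outline you have reconstructed and the surrounding propositions the paper does prove by the same circle of ideas.

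That said, your reconstruction is structurally correct and is consistent with how the paper argues in the adjacent results. In particular, the proof of the proposition giving $[\overline{Pr}_k]=\bigcup_{\eta\in P_+^\vee,\ \theta(\eta)\leq q-1}[\overline{Pr}_{k,t_{-\eta}}]$ shows exactly that for $y=\bar y\,t_{-\eta}$ one may reduce to $\bar y=1$ and the alcove $0\leq\alpha(\eta)\leq q-1$ for $\alpha\in\Delta_+$, and Proposition~\ref{Pro:dimension-of-orbit} computes $\dim\on{Var}(J_\lambda)=|\Delta|-|\Delta(\lambda)|$ with $\Delta(\lambda)=\{\alpha:\alpha(\eta)=0\}$. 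Combining these, the principal-orbit condition is precisely $\alpha(\eta)\neq 0$ for all $\alpha\in\Delta$, forcing $1\leq\alpha(\eta)\leq q-1$ for $\alpha\in\Delta_+$, and $\eta=\bar\mu+\rho^\vee$ with $(\theta,\bar\mu)\leq q-1-(h-1)=q-h$, which is your $\widehat P_+^{\vee,q-h}$. The formula $\bar\lambda-\tfrac{p}{q}(\bar\mu+\rho^\vee)$ then drops out of $\overline{t_{-\eta}\circ\widehat\lambda_\Z}=\bar\lambda-(k+h^\vee)\eta$, and the identification of the residual ambiguity with the diagonal $\eW_+$-action comes from \eqref{eq:Pr=Pr}. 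Two small things to fix in a careful writeup: you write $y=w\,t_\nu$ and set $\nu=\bar\mu+\rho^\vee$ but then claim the $\h^*$-component is $\bar\lambda-\tfrac{p}{q}(\bar\mu+\rho^\vee)$; this only comes out right with the paper's convention $y=\bar y\,t_{-\eta}$, $\eta=\bar\mu+\rho^\vee$, so the sign should be tracked consistently. Also ``$\widehat W=W\ltimes qQ^\vee$'' should read $\widehat W(k\Lambda_0)=W\ltimes qQ^\vee$ (the integral Weyl group), not the full affine Weyl group. Finally, the last paragraph on the diagonal $\eW_+$-action is the genuinely delicate step and is asserted rather than verified; in a full proof one would need to check, using \eqref{eq:elements-of-eW-of-Dynkin-auto} and Proposition~\ref{Pro:equiv-class-adm}, that right multiplication by $t_{q\varpi_j}w_j$ simultaneously transforms $\lambda_\Z$ by the corresponding diagram automorphism and shifts $\eta$ within the alcove.
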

There is a similar description 
for the cases that $(q,r^{\vee})=r^{\vee}$,
but we omit it since we will not need it in this paper.

\subsection{Minimal nilpotent orbit in type $A$}

Let $\fing=\mf{sl}_n$.
\begin{Lem}\label{Lem:theta}
Let  $\bar y\in W$.
If $\bar y\ne 1$, 
then there exists  $j\in J$ such that
 $\bar y \bar \pi_j(\theta)\in \Delta_+$.
\end{Lem}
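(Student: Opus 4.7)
The plan is to reduce the assertion to the standard fact that every nonidentity element of the finite Weyl group has a right descent, i.e., sends some simple root to a negative root.

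First I unravel the notation. The element $\pi_j = t_{\varpi_j} w_j \in \eW_+$ projects to $w_j \in W$ under the natural map $\eW = W \ltimes P^\vee \to W$, so $\bar\pi_j = w_j$. Using the defining property in \eqref{eq:elements-of-eW-of-Dynkin-auto}, namely $w_j(-\theta) = \alpha_j$, I get $\bar\pi_j(\theta) = w_j(\theta) = -\alpha_j$. Therefore
\[
\bar y\,\bar\pi_j(\theta) = -\bar y(\alpha_j),
\]
so the condition $\bar y\,\bar\pi_j(\theta) \in \Delta_+$ is equivalent to $\bar y(\alpha_j) \in \Delta_-$.

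Next I exploit that $\fing = \mf{sl}_n$ is of type $A$. The highest root decomposes as $\theta = \alpha_1 + \alpha_2 + \cdots + \alpha_{n-1}$, so every coefficient $a_i$ equals $1$ and $J = \{1,\dots,n-1\}$ indexes \emph{all} simple roots. Thus the lemma reduces to: for each $\bar y \in W \setminus \{1\}$ there exists a simple root $\alpha_j$ with $\bar y(\alpha_j) \in \Delta_-$.

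To finish I would give a direct combinatorial proof in the permutation model $W = S_n$ acting on $\C^n = \bigoplus_{i=1}^n \C e_i$ with $\alpha_j = e_j - e_{j+1}$. Writing $\bar y(e_i) = e_{\sigma(i)}$ for a permutation $\sigma \in S_n$, one computes $\bar y(\alpha_j) = e_{\sigma(j)} - e_{\sigma(j+1)}$, which lies in $\Delta_+$ iff $\sigma(j) < \sigma(j+1)$. If none of the indices $j = 1,\dots,n-1$ satisfies $\bar y(\alpha_j) \in \Delta_-$, then $\sigma(1) < \sigma(2) < \cdots < \sigma(n)$, forcing $\sigma = \mathrm{id}$ and hence $\bar y = 1$, a contradiction. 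I do not foresee any real obstacle: the only mildly substantive input is the identity $w_j(\theta) = -\alpha_j$, immediate from the defining property of $w_j$, and the rest is standard Weyl-group bookkeeping.
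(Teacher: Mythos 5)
Your proof is correct and follows the same approach as the paper: you compute $\bar y\,\bar\pi_j(\theta) = -\bar y(\alpha_j)$ using $w_j(-\theta) = \alpha_j$, note that $J = \{1,\dots,n-1\}$ in type $A$, and invoke the existence of a descent for any nonidentity Weyl group element. The paper simply asserts this last fact, while you supply the standard one-line verification in the permutation model; the content is identical.
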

\begin{proof}
We have $\bar y \bar \pi_j(\theta)=-\bar y(\alpha_j)$.
Since $J=\{1,2,\dots, n-1\}$ for $\fing=\mf{sl}_n$ and
$\bar y\ne 1$,
there exists $j\in J$ such that $\bar y(\alpha_j)\in \Delta_-$.
\end{proof}

Recall the decomposition \eqref{eq:KW-description-of-admissible-weights}.
Set $\overline{Pr}_{k,y}=\{\bar \lam\mid \lam\in Pr_{k,y}\}$
and denote by  $[\overline{Pr}_{k,y}]$ the image of $\overline{Pr}_{k,y}$ in $[ \overline{Pr}_{k}]$.

\begin{Pro}
Let $k$ be an admissible number for $\affg$,
$q\in \N$ the denominator of $k$.
We have
\begin{align*}
[\overline{Pr}_k]=\bigcup_{\eta\in P_+^{\vee}\atop \theta(\eta)\leq q-1}[\overline{Pr}_{k,{t_{-\eta}}}],
\end{align*}
where $P_+^{\vee}$ is the set of dominate coweights.
\end{Pro}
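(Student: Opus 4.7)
The plan is to take any $y \in \eW$ satisfying the positivity condition of \eqref{eq:KW-description-of-admissible-weights}, write $y = \bar y\, t_{-\eta}$ uniquely with $\bar y \in W$ and $\eta \in P^\vee$, and produce a representative $[\overline{Pr}_{k,t_{-\eta'}}]$ of the class $[\overline{Pr}_{k,y}]$ with $\eta' \in P_+^\vee$ and $\theta(\eta') \leq q-1$, separating cases according to whether $\bar y$ is the identity.

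The first ingredient is a $W$-absorption. A direct calculation with the dot action shows $\overline{y \cdot \lambda_0} = \bar y \cdot \bar\lambda_0 - (k+h^\vee)\,\bar y(\eta)$ for $\lambda_0 \in Pr_{k,\mathbb{Z}}$, and applying $\bar y^{-1}$ under the defining $W$-dot equivalence of $[\overline{Pr}_k]$ gives $[\overline{Pr}_{k,y}] = [\overline{Pr}_{k,t_{-\eta}}]$ even when $t_{-\eta}$ itself fails positivity. Unpacking the positivity of $y$ at the simple roots $\alpha_1,\dots,\alpha_l$ and at $-\theta + q\delta$ through $y(\alpha_i) = \bar y(\alpha_i) + \alpha_i(\eta)\delta$ and $y(-\theta+q\delta) = -\bar y(\theta) + (q - \theta(\eta))\delta$ yields $\eta \in P_+^\vee$, $\theta(\eta) \leq q$, and $\alpha_j(\eta) \geq 1$ for every $j$ with $\bar y(\alpha_j) \in \Delta_-$.

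When $\bar y = 1$, positivity gives $\eta \in P_+^\vee$ and $\theta(\eta) \leq q-1$, so $\eta' = \eta$ works immediately. The main case is $\bar y \neq 1$: here Lemma \ref{Lem:theta} supplies a $j \in J$ with $\bar y(\alpha_j) \in \Delta_-$, hence $\alpha_j(\eta) \geq 1$ by the previous paragraph. I would then invoke \eqref{eq:Pr=Pr} to replace $y$ by $y' := y\, t_{q\varpi_j}w_j$ and write $y' = (\bar y w_j)\, t_{-\eta'}$ with $\eta' = w_j^{-1}(\eta - q\varpi_j)$. Using the description of $w_j$ in type $A$ as the cyclic rotation of the extended Dynkin diagram (so $w_j(\alpha_i) = \alpha_{(i+j)\bmod n}$ with $\alpha_0 := -\theta$), a short computation gives $\theta(\eta') = q - \alpha_j(\eta) \leq q-1$, and shows that the $\alpha_i(\eta')$ are all non-negative (they are either of the form $\alpha_k(\eta)$ for $k \neq j$ or equal to $q - \theta(\eta)$), so $\eta' \in P_+^\vee$. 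Applying the $W$-absorption to $y'$ concludes $[\overline{Pr}_{k,y}] = [\overline{Pr}_{k,y'}] = [\overline{Pr}_{k,t_{-\eta'}}]$.

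The main obstacle is the case $\bar y \neq 1$, where one must produce $\eta'$ simultaneously dominant and satisfying $\theta(\eta') \leq q-1$. Lemma \ref{Lem:theta} is the essential combinatorial input, pairing a descent of $\bar y$ at some $\alpha_j$ with $j \in J$ against the cyclic symmetry of the affine Dynkin diagram in type $A$, so that the explicit formula for $\eta'$ lands automatically in $P_+^\vee \cap \{\theta \leq q-1\}$; this is also where the type-$A$ hypothesis $J = \{1,\dots,n-1\}$ enters essentially.
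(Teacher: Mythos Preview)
Your proof is correct and follows essentially the same approach as the paper: write $y=\bar y\,t_{-\eta}$, extract the positivity inequalities, invoke Lemma~\ref{Lem:theta} and \eqref{eq:Pr=Pr} to adjust $y$, and absorb $\bar y$ via the $W$-dot equivalence. The only difference is that where you verify $\eta'\in P_+^\vee$ and $\theta(\eta')\leq q-1$ by an explicit computation of $\eta'=w_j^{-1}(\eta-q\varpi_j)$ using the cyclic action of $w_j$, the paper instead notes that the replacement $y'=y\,t_{q\varpi_j}w_j$ again satisfies positivity with $\bar y'(\theta)=\bar y w_j(\theta)\in\Delta_+$, so its own inequality characterization directly gives $\theta(\eta')\leq q-1$ without any calculation.
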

\begin{proof}
Let  $y=\bar yt_{-\eta}\in \eW$ with $y\in W$, $\eta\in P^{\vee}$.
It is straightforward to see that the condition $y(\widehat{\Delta}(k\Lam_0))\subset \widehat{\Delta}_+^{re}$
is equivalent  to that
\begin{align}
\begin{cases}
0\leq \alpha(\eta)\leq q-1&\text{if }\bar y(\alpha)\in \Delta_+,\\
1\leq \alpha(\eta)\leq  q &\text{if }\bar y(\alpha)\in \Delta_-,
\end{cases}
\end{align}
for all $\alpha\in \Delta_+$.

In particular,
for $\eta\in P^{\vee}$,
 $t_{-\eta}(\widehat{\Delta}(k\Lam_0))\subset \widehat{\Delta}_+^{re}$
if and only if $\eta\in P_+^{\vee}$ and $ \theta(\eta)\leq q-1$. 
This together with \eqref{eq:KW-description-of-admissible-weights}
 shows the inclusion  $\supset $.
Conversely,
suppose that 
$y=\bar yt_{-\eta}\in \eW$ with $y\in W$, $\eta\in P^{\vee}$
satisfies that $y(\widehat{\Delta}(k\Lam_0))\subset \widehat{\Delta}_+^{re}$.
We claim that 
we may assume that
$\bar y(\theta)\in \Delta_+$.
Indeed,
$Pr_{k,y}=Pr_{k,y t_{q\varpi_j}\bar \pi_j}$
by \eqref{eq:Pr=Pr} for $j\in J$, and
if $\bar y(\theta)\in \Delta_-$,
then there exists $j\in J$ such that
$\bar y \bar \pi_j(\theta)\in \Delta_+$
by Lemma \ref{Lem:theta}.
We obtain  $\eta\in P_+^{\vee}$ and $\theta(\eta)\leq q-1$,
which completes the proof
 by Proposition \ref{Pro:equiv-class-adm}.
\end{proof}

\begin{Pro}\label{Pro:dimension-of-orbit}
Let $\lam\in \overline{Pr}_{k,t_{-\eta}}$
with $\eta\in P_+^{\vee}$, $\theta(\eta)\leq q-1$.
Then
\begin{align*}
\dim \on{Var}(J_{\lam})=|\Delta|-|\Delta( \lam)|,
\end{align*}
where $\Delta(\lam)=\{\alpha\in \Delta\mid \lam(\alpha^{\vee})\in \Z\}$.
\end{Pro}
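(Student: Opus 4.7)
The plan is to first pin down $\Delta(\lambda)$ explicitly in terms of $\eta$, and then to invoke the identification of $\mathrm{Var}(J_\lambda)$ with a Richardson nilpotent orbit attached to this combinatorial datum, from which the dimension drops out.

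As a first reduction, Proposition \ref{Pro:equiv-class-adm} tells us that $J_\lambda$ depends only on the $W$-equivalence class of $\lambda$, and clearly $|\Delta(\lambda)|$ is a $W$-invariant. Writing $y = \bar{y}\, t_{-\eta}$ we may therefore replace $y$ by $t_{-\eta}$ and assume $\lambda = \overline{t_{-\eta}\circ\lambda_0}$ for some $\lambda_0 \in Pr_{k,\mathbb{Z}}$.

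Next I would compute $\Delta(\lambda)$. Since $r^{\vee} = 1$ in type $A$, formula \eqref{eq:integral-roots} gives $\widehat\Delta(\lambda_0) = \{\alpha + nq\delta : \alpha\in\Delta,\ n\in\mathbb{Z}\}$. The standard action of $t_{-\eta}$ on real roots is $t_{-\eta}(\alpha + m\delta) = \alpha + (m+\alpha(\eta))\delta$, so intersecting with $\Delta$ yields
\[
\Delta(\lambda) \;=\; \widehat\Delta(\lambda)\cap\Delta \;=\; \{\alpha\in\Delta : \alpha(\eta)\in q\mathbb{Z}\}.
\]
The hypothesis $\eta\in P_+^{\vee}$ with $\theta(\eta)\leq q-1$ forces $|\alpha(\eta)|\leq q-1$ for every $\alpha\in\Delta$, so the congruence $\alpha(\eta)\in q\mathbb{Z}$ simplifies to $\alpha(\eta)=0$. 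Thus $\Delta(\lambda) = \{\alpha\in\Delta : \alpha(\eta)=0\}$, namely the root system of the Levi centralizer $\mathfrak{l} := \mathfrak{z}_{\fing}(\eta)$.

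Finally, let $\mathfrak{p} = \mathfrak{l} + \mathfrak{u}$ with $\mathfrak{u} = \bigoplus_{\alpha(\eta)>0}\fing_\alpha$, and let $\mathbb{O}_{\mathfrak{p}}$ be the Richardson orbit associated with $\mathfrak{p}$ (every nilpotent orbit in $\mathfrak{sl}_n$ is Richardson, so this is unambiguous). I would then appeal to the refinement of Theorem \ref{Th;subsquence} developed in \cite{A2012Dec,A12-2} to identify $\mathrm{Var}(J_\lambda) = \overline{\mathbb{O}_{\mathfrak{p}}}$, whence
\[
\dim\mathrm{Var}(J_\lambda) \;=\; 2\dim\mathfrak{u} \;=\; 2(|\Delta_+|-|\Delta(\lambda)_+|) \;=\; |\Delta|-|\Delta(\lambda)|.
\]
The main obstacle is precisely this last identification: Theorem \ref{Th;subsquence} only yields the \emph{inclusion} $\mathrm{Var}(J_\lambda)\subset\overline{\mathbb{O}_q}$, and upgrading it to the equality with $\overline{\mathbb{O}_{\mathfrak{p}}}$ requires the Kazhdan-Lusztig/Goldie-rank-style combinatorics of admissible weights in loc.\ cit., together with the fact that in type $A$ the nilpotent orbit attached to a regular integral central character $\chi_{\bar\lambda}$ via the primitive spectrum is Richardson, determined by the $\tau$-invariant, which here coincides with the complement of $\Delta(\lambda)_+$ in $\Delta_+$.
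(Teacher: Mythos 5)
Your computation of $\Delta(\lambda)=\{\alpha\in\Delta:\alpha(\eta)=0\}$ matches the paper, as does the identification of this subsystem with the root system of the Levi $\mathfrak{l}$ of a standard parabolic $\mathfrak{p}=\mathfrak{l}+\mathfrak{m}$. The divergence, and the gap, is in the final step. You try to pin down $\mathrm{Var}(J_\lambda)$ exactly as the closure of the Richardson orbit $\mathbb{O}_{\mathfrak{p}}$, and you explicitly concede that you cannot justify this: Theorem \ref{Th;subsquence} concerns $\mathrm{Var}(I_k)$, not $\mathrm{Var}(J_\lambda)$, and only yields the inclusion $\mathrm{Var}(J_\lambda)\subset\overline{\mathbb{O}_q}$. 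Your appeal to ``Kazhdan--Lusztig/Goldie-rank-style combinatorics'' and the $\tau$-invariant is not an argument, and the framing is also off: the central character here is \emph{not} integral ($\Delta(\lambda)$ is a proper subsystem precisely because $\eta\neq 0$), so the $\tau$-invariant heuristic for integral regular blocks does not directly apply.

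The paper sidesteps all of this and never determines the orbit. Since $\lambda\in\overline{Pr}_k$ is regular dominant, $\langle\lambda+\rho,\alpha^\vee\rangle\notin\mathbb{Z}$ for $\alpha\in\Delta_+\setminus\Delta(\lambda)_+$, and Jantzen's irreducibility criterion \cite{Jan77} gives
\begin{align*}
L(\lambda)\cong U(\fing)\otimes_{U(\mathfrak{p})}L_{\mathfrak{l}}(\lambda),
\end{align*}
a generalized Verma module with finite-dimensional inducing module. Hence $\mathrm{GKdim}\,L(\lambda)=\dim\mathfrak{m}=\tfrac12\bigl(|\Delta|-|\Delta(\lambda)|\bigr)$, and then Joseph's holonomicity theorem for simple highest weight modules, $\dim\mathrm{Var}(J_\lambda)=2\,\mathrm{GKdim}\,L(\lambda)$, finishes the proof. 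This is both more elementary and strictly weaker than what you were trying to establish: it gives only the dimension of the associated variety, which is all the proposition asks for. To repair your argument you would need to actually supply the orbit identification (e.g.\ via Borho--Kraft's description of associated varieties of induced ideals, or Lusztig--Spaltenstein induction applied to the maximal ideal $J_\lambda\supset U(\fing)\ker\chi_\lambda$), but that is considerably heavier machinery than the problem requires.
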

\begin{proof}
We have 
${\Delta}(\lam)=t_{-\eta}(\widehat{\Delta}(k\Lam_0))\cap\Delta
=\{\alpha\in \Delta\mid 
\alpha(\eta)=0\}$.
Since $\eta\in P_+^{\vee}$,
it follows that ${\Delta}(\lam)$
 a subroot system of $\Delta$
generated by the simple roots $\alpha_i$ such that
$\alpha_i(\eta)=0$.
Let $\mf{p}$ be the corresponding parabolic subalgebra 
of $\fing$ containing $\mf{b}_+=\h\+\mf{n}$,
$\mf{m}$ its nilradical, 
$\mf{l}$ its Levi subalgebra.
Denote by $L_{\mf{l}}(\lam)$ the simple finite-dimensional module of $\mf{l}$ with highest weight $ \lam$.
Then \cite{Jan77}
\begin{align}
L(\lam)=U(\fing)\*_{U(\mf{p})}L_{\mf{l}}(\lam),
\end{align}
where $L_{\mf{l}}( \lam)$ is regarded as a $\mf{p}$-module by the projection $\mf{p}\ra \mf{l}$.
It follows that the Gelfand-Kirillov dimension
$\on{Dim}L( \lam)$ of $L( \lam)$ equals to $\dim \mf{m}=1/2(|\Delta|-|\Delta( \lam)|)$.
As $L( \lam)$ is holonomic,
we have
$\dim \on{Var}(J_{\lam})=2 \on{Dim}L(\bar \lam)=|\Delta|-|\Delta( \lam)|$ as required. \end{proof}

Let $\mathbb{O}_{min}$ be the unique minimal non-trivial nilpotent orbit of $\fing$,
which is of dimension $2h^{\vee}-2=2(n-1)$ \cite{Wan99}.
We have $\mathbb{O}_{min}=G.e_{\alpha}$,
where $e_{\alpha}$ is a root vector of some root $\alpha$.
\begin{Pro}\label{Pro:minimal}
 Let $\fing=\mf{sl}_n$,
 $k$ an admissible number with denominator $q$.
 Then
 \begin{align*}
  [{\overline{Pr}_k^{\mathbb{O}_{min}}}]
  =\bigsqcup_{a=1}^{q-1}[
  \overline{Pr}_{k,t_{-a\varpi_1}}]
  =\bigsqcup_{a=1}^{q-1}\{[\bar
  \lam-\frac{ap}{q}\varpi_1]\mid \lam\in \wh{P}_+^{p-n}\}.
 \end{align*}
 \end{Pro}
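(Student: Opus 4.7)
The plan is to combine Proposition \ref{Pro:dimension-of-orbit} with a short combinatorial analysis of parabolic subsystems, and then to use the affine-Weyl identification \eqref{eq:Pr=Pr} to collapse two apparent families of parameters into one.

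First I identify which $\eta$ from the preceding proposition contribute to $\overline{Pr}_k^{\mathbb{O}_{min}}$. Since $|\Delta|=n(n-1)$ and $\dim\mathbb{O}_{min}=2(n-1)$ for $\fing=\mf{sl}_n$, Proposition \ref{Pro:dimension-of-orbit} characterizes these as the classes for which $\Delta(\lambda)=\{\alpha\in\Delta:\alpha(\eta)=0\}$ has cardinality $(n-1)(n-2)$. Writing $\eta=\sum c_i\varpi_i$, $\Delta(\lambda)$ is the parabolic subsystem obtained by removing the simple roots with $c_i\neq 0$; for it to have the required size exactly one $c_j$ must be nonzero and $j(j-1)+(n-j)(n-j-1)=(n-1)(n-2)$, i.e.\ $j^2-nj+(n-1)=0$, forcing $j\in\{1,n-1\}$. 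Together with $\theta(\eta)=a\leq q-1$, this leaves $\eta=a\varpi_1$ or $a\varpi_{n-1}$ with $1\leq a\leq q-1$. The explicit formula $\overline{Pr}_{k,t_{-a\varpi_j}}=\{\bar\lambda-\tfrac{ap}{q}\varpi_j:\lambda\in\wh P_+^{p-n}\}$ then follows from $\overline{Pr}_{k,\Z}=\wh P_+^{p-n}$ (using $h^\vee=n$ and $r^\vee=1$) together with the translation dot-action computation $\overline{t_{-a\varpi_j}\circ\lambda}=\bar\lambda-(k+h^\vee)a\varpi_j$.

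The heart of the proof is showing that the $\varpi_{n-1}$ family is redundant, i.e.\ $[\overline{Pr}_{k,t_{-a\varpi_{n-1}}}]=[\overline{Pr}_{k,t_{-(q-a)\varpi_1}}]$. By \eqref{eq:Pr=Pr} applied with $j=n-1$,
$$Pr_{k,\,t_{-a\varpi_{n-1}}}=Pr_{k,\,t_{-a\varpi_{n-1}}t_{q\varpi_{n-1}}w_{n-1}}=Pr_{k,\,w_{n-1}\,t_{w_{n-1}^{-1}((q-a)\varpi_{n-1})}}.$$
Realizing $\mf{sl}_n$ in $\C^n/\C\mathbf 1$ with $\alpha_i=e_i-e_{i+1}$, the defining property of $w_j$ (fixing $\{\alpha_1,\ldots,\alpha_{n-1},-\theta\}$ with $w_j(-\theta)=\alpha_j$) identifies it with the cyclic shift $e_i\mapsto e_{i+j\bmod n}$; in particular $w_{n-1}^{-1}=w_1$ and $w_1(\varpi_{n-1})=-\varpi_1$ (since $\varpi_{n-1}\equiv -e_n$ modulo $\sum_i e_i$). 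Thus the right-hand side equals $Pr_{k,\,w_{n-1}t_{-(q-a)\varpi_1}}$, whose image in $[\overline{Pr}_k]$ coincides with that of $Pr_{k,t_{-(q-a)\varpi_1}}$ since left-multiplication by $w_{n-1}\in W$ is absorbed by the $W$-dot-equivalence.

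For disjointness, if $[\bar\lambda-\tfrac{ap}{q}\varpi_1]=[\bar\lambda'-\tfrac{a'p}{q}\varpi_1]$ with $a,a'\in\{1,\ldots,q-1\}$, then $w(\bar\lambda+\rho)-\bar\lambda'-\rho=\tfrac{p}{q}(aw(\varpi_1)-a'\varpi_1)\in P$ for some $w\in W$, and $(p,q)=1$ forces $aw(\varpi_1)-a'\varpi_1\in qP$; a short divisibility check in $\Z^n/\Z\mathbf 1$, using that the orbit $W\cdot\varpi_1$ corresponds to $\{e_1,\ldots,e_n\}$, forces $w(1)=1$ and $q\mid a-a'$, hence $a=a'$. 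The main obstacle, and the only step that genuinely exploits type $A$, is the calculation $w_1(\varpi_{n-1})=-\varpi_1$ via the cyclic realization of $w_j$, which is what makes the $\varpi_1$ and $\varpi_{n-1}$ parameters identifiable once one passes to the $W$-dot-quotient.
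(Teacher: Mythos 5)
Your proof is correct and follows the same outline as the paper's: it deduces from Proposition~\ref{Pro:dimension-of-orbit} that only $\eta=a\varpi_1$ or $\eta=a\varpi_{n-1}$ contribute, collapses the $\varpi_{n-1}$ family via \eqref{eq:Pr=Pr}, and then argues disjointness.

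Two places where you go beyond what the paper actually writes, both to the good. First, the collapse identity: the paper asserts $[\overline{Pr}_{k,t_{-a\varpi_1}}]=[\overline{Pr}_{k,t_{-a\varpi_{n-1}}}]$ at the same value of $a$, whereas your computation (using $w_{n-1}^{-1}=w_1$ and $w_1(\varpi_{n-1})=-\varpi_1$ in the cyclic realization of $w_j$) gives $[\overline{Pr}_{k,t_{-a\varpi_{n-1}}}]=[\overline{Pr}_{k,t_{-(q-a)\varpi_1}}]$. Your version is the correct one; taken literally at fixed $a$ the paper's identity would contradict the disjointness that is asserted in the very next sentence (unless $q=2a$). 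The union over $a=1,\dots,q-1$ is of course unaffected by the reindexing $a\mapsto q-a$, so the paper's conclusion stands; you have just tidied up the bookkeeping. Second, the disjointness: the paper declares it ``straightforward,'' and you actually carry it out, reducing to a $qP$-divisibility check on $a\,w(\varpi_1)-a'\varpi_1$ using that $W\cdot\varpi_1$ corresponds to $\{e_1,\dots,e_n\}$ in $\Z^n/\Z\mathbf 1$ and $(p,q)=1$; this is correct. The only small thing left implicit in your argument is that $y'=t_{-a\varpi_{n-1}}t_{q\varpi_{n-1}}w_{n-1}$ again satisfies the positivity constraint $y'(\wh\Delta(k\Lam_0)_+)\subset\wh\Delta^{re}_+$ needed to invoke \eqref{eq:Pr=Pr}; this holds automatically because $t_{q\varpi_j}w_j$ stabilizes $\wh\Delta(k\Lam_0)_+$, but it is worth a line.
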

\begin{proof}
Let $\lam\in \overline{Pr}^k_{t_{-\eta}}$
with $\eta\in P_+^{\vee}$, $\theta(\eta)\leq q-1$.
By Proposition \ref{Pro:dimension-of-orbit},
 $\lam\in \overline{Pr}_k^{\mathbb{O}_{min}}$
 if and only if
 $|\Delta(\lam)|=n(n-1)-2(n-1)=(n-1)(n-2)$.
This happens if and only if
$\Delta( \lam)$ is a root system of type $A_{n-2}$,
or equivalently,
$\eta=-a\varpi_1$ or $\eta=-a\varpi_{n-1}$ for some $a=1,\dots,q-1$.
Since $ [\overline{Pr}_{k,t_{-a\varpi_1}}]= [\overline{Pr}_{k,t_{-a\varpi_{n-1}}}]$ by \eqref{eq:Pr=Pr}
and Proposition \ref{Pro:equiv-class-adm},
we get that
\begin{align}
  [{\overline{Pr}_k^{\mathbb{O}_{min}}}]
  =\bigcup_{a=1}^{q-1}[
 \overline{ Pr}_{k,t_{-a\varpi_1}}].
 \end{align}
It is straightforward to see that this is a disjoint union.
\end{proof}
Note that if $M$ is a cuspidal 
admissible $\g$-module of level $k$ with finite dimensional weight spaces
then 
$\on{Ann}_{U(\fing)}(M)=J_{ \lam}$ for some $\lam\in  [{\overline{Pr}_k^{\mathbb{O}_{min}}}]$
(\cite{Mat00}).
\subsection{The $\mf{sl}_3$-cases}
 Let $\fing=\mf{sl}_3$.
 Then
the level $k$ is admissible if and only if
 $k+3=p/q$ with $p,q\in \N$, $(p,q)=1$, $p\geq 3$.
 We have
 \begin{align}
  \mathbb{O}_q=\begin{cases}
		\mathbb{O}_{prin}&\text{if }q\geq 3,\\
		\mathbb{O}_{min}&\text{if }q=2,\\
		{0}&\text{if }q=1.
	       \end{cases}
 \end{align}
 Note that $\dim \mathbb{O}_{prin}=6$,
 $\dim \mathbb{O}_{min}=4$.

The following assertion follows immediately from 
Proposition \ref{Pro:principal} and Proposition \ref{Pro:minimal}.
 
\begin{Pro}\label{Pro: Cases for sl_3}
Let $k$ be an admissible number for $\affg$,
so that
$k+3= p/q$, $p,q\in \Z$, $p\geq 3$, $(p,q)=1$.
We have
\begin{align*}
&[\overline{Pr}_k^{\mathbb{O}_{prin}}]=
\{[\bar \lam-\frac{p}{q}(\bar \mu+\rho)]\mid \lam\in \widehat{P}_+^{p-3},
\mu\in \widehat{P}_+^{q-3} \},\\
&[\overline{Pr}_k^{\mathbb{O}_{min}}]=\{[\bar \lam-\frac{ap}{q}\varpi_1]\mid \lam\in \widehat{P}_+^{p-3},\
a=1,\dots, q-1
\},\\
&[\overline{Pr}_k^{\{0\}}]=\{[\bar \lam]\mid \lam\in \widehat{P}_+^{p-3}\}.
\end{align*}
\end{Pro}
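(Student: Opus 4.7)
The proposition is a direct specialization of Proposition \ref{Pro:principal} and Proposition \ref{Pro:minimal} to the case $\fing=\mf{sl}_3$, so the plan is essentially bookkeeping. First I would record the relevant numerical data for $\mf{sl}_3$: it is simply-laced, so $r^{\vee}=1$, $(r^{\vee},q)=1$ automatically, and $\fing$ is self-Langlands-dual, giving $h=h^{\vee}={}^Lh^{\vee}=3$, $\rho=\rho^{\vee}$, and $\wh{P}_+^{\vee,m}=\wh{P}_+^m$. Thus admissibility reduces to $k+3=p/q$ with $(p,q)=1$ and $p\geq 3$. The possible orbits $\mathbb{O}_q$ listed in the statement follow from Theorem \ref{Th;subsquence} together with the classification of nilpotent orbits of $\mf{sl}_3$ (only $\{0\}$, $\mathbb{O}_{min}$, $\mathbb{O}_{prin}$ are available), matching the three cases $q=1$, $q=2$, $q\geq 3$.

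For $[\overline{Pr}_k^{\mathbb{O}_{prin}}]$, I apply Proposition \ref{Pro:principal} directly, substituting $h^{\vee}=h=3$ and $\rho^{\vee}=\rho$. The proposition gives a bijection from $(\wh{P}_+^{p-3}\times\wh{P}_+^{q-3})/\eW_+$ to $[\overline{Pr}_k^{\mathbb{O}_{prin}}]$ sending $[(\lam,\mu)]$ to $[\bar\lam-\tfrac{p}{q}(\bar\mu+\rho)]$. Since the target is already expressed as equivalence classes under $\sim$, the $\eW_+$ quotient on the source is automatically absorbed, and the set-level description in the proposition is obtained by simply letting $(\lam,\mu)$ range over $\wh{P}_+^{p-3}\times\wh{P}_+^{q-3}$ without quotient. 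One should of course remark that for $q\geq 3$, both $\wh{P}_+^{p-3}$ and $\wh{P}_+^{q-3}$ are nonempty, so this case is nontrivial.

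For $[\overline{Pr}_k^{\mathbb{O}_{min}}]$, I specialize Proposition \ref{Pro:minimal} to $n=3$, which yields exactly the stated formula $\bigsqcup_{a=1}^{q-1}\{[\bar\lam-\tfrac{ap}{q}\varpi_1]\mid \lam\in \wh{P}_+^{p-3}\}$. Here the index $a$ runs over $1,\dots,q-1$, so this set is nonempty precisely when $q\geq 2$, consistent with $\mathbb{O}_{min}\subseteq \overline{\mathbb{O}_q}$ iff $q\geq 2$.

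Finally, for $[\overline{Pr}_k^{\{0\}}]$, I use the general identification $\overline{Pr}_k^{\{0\}}=\overline{Pr}_{k,\Z}$ from the preceding subsection together with the identification $Pr_{k,\Z}\cong \wh{P}_+^{p-h^{\vee}}=\wh{P}_+^{p-3}$ recorded just after \eqref{eq:Pr+}; this produces the third description. No genuine obstacle arises here; the only point that requires attention is a check that the three sets in the proposition are described as plain sets (not as quotients), which forces us to keep the $\sim$-classes on the left-hand side and drop the diagonal $\eW_+$-action on the right-hand side, as in paragraph two.
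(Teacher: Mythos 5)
Your proof is correct and takes essentially the same route as the paper, which simply notes that the proposition "follows immediately from Proposition \ref{Pro:principal} and Proposition \ref{Pro:minimal}." Your bookkeeping — $r^{\vee}=1$, $h=h^{\vee}=3$, $\rho=\rho^{\vee}$, $\widehat{P}_+^{\vee,m}=\widehat{P}_+^{m}$ for $\mathfrak{sl}_3$, plus the observation that passing to $\sim$-classes absorbs the diagonal $\eW_+$-quotient — is exactly the intended specialization.
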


\subsection{Compatibility of admissible $\fing$-modules with the restriction}
 Let $\mathfrak{p}=\mathfrak{l}+\mathfrak{m}$ be a parabolic subalgebra of $\mathfrak{g}$,
 where $\mf{l}$ is its Levi subalgebra and $\mf{m}$ is its nilradical.
Let $[\mathfrak{l},\mathfrak{l}]=\sum_{i} \mathfrak{l}_i$ be a decomposition into the sum of 
simple Lie subalgebras.
Let $\widehat{\mf{l}}_i$ be the affine Kac-Moody algebra  associated with $\mf{l}_i$.
For an admissible number $k$ of $\affg$,
define the admissible number $k_i$ for $\widehat{\mf{l}}_i$ 
by
\begin{align}
k_i+h_i^{\vee}=\frac{2}{(\theta_i,\theta_i)}(k+h^{\vee}),
\end{align}
where $h_i^{\vee}$ is the dual Coxeter number of $\mf{l}_i$,
$\theta_i$ is the highest root of $\mf{l}_i$.

\begin{Th}\label{Th:restriction}
Let  $M$ be an admissible $\fing$-module of level $k$.
Then $\mathfrak{m}$-invariant subspace $M^{\mathfrak{m}}$ is an admissible  module of $\mathfrak{l}_i$ of level $k_i$ for each $i$.
\end{Th}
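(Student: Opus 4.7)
The plan is to reformulate the theorem as an annihilation statement for ideals in $U(\mathfrak{l}_i)$ and then obtain this annihilation via a quantum Hamiltonian reduction along the parabolic $\mathfrak{p}$. By Zhu's correspondence the hypothesis that $M$ is admissible of level $k$ says $\on{Ann}_{U(\fing)}(M)\supset I_k$, and the desired conclusion is equivalent to $\on{Ann}_{U(\mathfrak{l}_i)}(M^{\mathfrak{m}})\supset I_{k_i}$, where $A(V_{k_i}(\mathfrak{l}_i))=U(\mathfrak{l}_i)/I_{k_i}$.

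First I would invoke the zero-character semi-infinite cohomology functor $\mathsf{H}:=H^{\frac{\infty}{2}+0}_{\mathfrak{m}[t,t^{-1}]}(-)$ as the vertex-algebraic avatar of the $\mathfrak{m}$-invariants functor. On the universal vertex algebra $V^k(\fing)$ this reduction produces a vertex algebra which contains each $V^{k_i}(\mathfrak{l}_i)$ as a subalgebra: the scaling factor $2/(\theta_i,\theta_i)$ in the level shift comes from restricting the normalized invariant form of $\fing$ to $\mathfrak{l}_i$, while the shift by $h^{\vee}-h_i^{\vee}$ is the standard dual-Coxeter correction supplied by the BRST ghost contribution to the Sugawara tensor, so that the level of each affine component of $\mathsf{H}(V^k(\fing))$ is exactly the $k_i$ of the statement. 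Next I would show that singular vectors of $V^k(\fing)$ lying in the maximal proper submodule map under this reduction to singular vectors lying in the maximal proper submodule of $V^{k_i}(\mathfrak{l}_i)$, so that $\mathsf{H}$ descends to a functor from $V_k(\fing)$-modules to $V_{k_i}(\mathfrak{l}_i)$-modules.

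Third, I would match the induced functor on Zhu algebras with $N\mapsto N^{\mathfrak{m}}$. For a positive-energy $V^k(\fing)$-module the top conformal weight space of $\mathsf{H}(\mathbf{L}_k(M))$ is naturally identified with $\mathbf{L}_k(M)_{\on{top}}^{\mathfrak{m}}=M^{\mathfrak{m}}$, and the induced $A(V_{k_i}(\mathfrak{l}_i))$-action is simply the restriction of the $\mathfrak{l}_i$-action on $M$. Combining this identification with the previous step yields $I_{k_i}\subset\on{Ann}_{U(\mathfrak{l}_i)}(M^{\mathfrak{m}})$, which is the desired admissibility.

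The hard part will be the second step: showing that the BRST reduction truly factors through the simple quotient on both ends. This amounts to controlling which singular vectors of $V^k(\fing)$ are killed by the differential and verifying that what survives generates precisely $I_{k_i}$ in $U(\mathfrak{l}_i)$. I would expect to need the description of admissible levels in Proposition~\ref{Pro:admissible number}, the regularity and classification in Theorem~\ref{Th:classification-of-admissible-modules}, and especially the associated-variety description in Theorem~\ref{Th;subsquence} in order to pin down precisely which Zhu-algebra relations descend to each Levi factor.
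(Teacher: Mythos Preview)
Your overall strategy coincides with the paper's: apply the semi-infinite cohomology functor $H^{\frac{\infty}{2}+\bullet}(\mathfrak{m}[t,t^{-1}],-)$ to $\mathbf{L}_k(M)$, use that the resulting space is a $V_{k_i}(\mathfrak{l}_i)$-module, and then identify its top weight component with $H^0(\mathfrak{m},M)=M^{\mathfrak{m}}$. The level computation you sketch and the Zhu-algebra reformulation are exactly what is needed.

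The one place where you diverge from the paper is the ``hard part''. The paper does not argue this by tracking singular vectors or by invoking Theorem~\ref{Th;subsquence}; it simply cites \cite[Theorem~7.5]{A-BGG}, whose proof rests on the two-sided BGG resolution of admissible representations. Your proposed route---show that the images of the generating singular vectors of the maximal submodule of $V^k(\fing)$ under the BRST differential produce exactly $I_{k_i}$---is more delicate than it looks: the maximal submodule of $V^k(\fing)$ is not in general generated by a single singular vector, the BRST complex mixes degrees, and knowing $\operatorname{Var}(I_k)$ does not by itself pin down which Zhu relations survive in each Levi factor. None of Proposition~\ref{Pro:admissible number}, Theorem~\ref{Th:classification-of-admissible-modules}, or Theorem~\ref{Th;subsquence} gives you direct control over this. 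So while the architecture of your plan is right, for the crucial step you should either invoke \cite[Theorem~7.5]{A-BGG} as the paper does, or be prepared to reproduce an argument of comparable depth (a resolution-based computation of $H^{\frac{\infty}{2}+\bullet}(\mathfrak{m}[t,t^{-1}],V_k(\fing))$), rather than a singular-vector chase.
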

\begin{proof}
Since $M$ is an $A(\Vs{k})$-module,
$\mathbf{L}(M)$ is a $\Vs{k}$-module.
Hence by \cite[Theorem 7.5]{A-BGG},
the semi-infinite cohomology space
$H^{\frac{\infty}{2}+n}(\mf{m}[t,t^{-1}],\mathbf{L}(M))$
is a module over the admissible affine vertex algebra
$V_{k_i}(\mf{l}_i)$ for all $n$, $i$.
Therefore
the top weigh space of $H^{\frac{\infty}{2}+n}(\mf{m}[t,t^{-1}],\mathbf{L}(M))$
is a module over 
$A(V_{k_i}(\mf{l}_i))$-module,
that is to say,
an admissible $\mf{l}_i$-module of level $k_i$.
The assertion follows since 
 the top weight space of 
$H^{\frac{\infty}{2}+n}(\mf{m}[t,t^{-1}],\mathbf{L}(M))$, $n\geq0$,
is isomorphic to $H^{n}(\mf{m}, M)$ as an $\mf{l}$-module
provided that $H^{n}(\mf{m}, M)$ is nonzero
(see the proof of \cite[Lemma 4.3]{A12-2}).

\end{proof}

\section{Gelfand-Tsetlin modules}

We recall the definition of a Gelfand-Tsetlin module for $\gl_n$.
Let  $U=U(\gl_n)$ and $\{E_{ij}\; | \; 1\leq i,j \leq n\}$  be the
standard basis of $\mathfrak{gl}_{n}$ of elementary matrices.  For each $m\leqslant n$ let $\mathfrak{gl}_{m}$ be the Lie subalgebra
of $\gl_n$ spanned by $\{ E_{ij}\,|\, i,j=1,\ldots,m \}$. Then we have the following chain
$$\gl_1\subset \gl_2\subset \ldots \subset \gl_n,$$
which induces  the chain $U_1\subset$ $U_2\subset$ $\ldots$ $\subset
U_n$ of the universal enveloping algebras  $U_{m}=U(\gl_{m})$, $1\leq m\leq n$. Let
$Z_{m}$ be the center of $U_{m}$. Then $Z_m$ is the polynomial
algebra in the $m$ variables $\{ c_{mk}\,|\,k=1,\ldots,m \}$,
\begin{equation}\label{equ_3}
c_{mk } \ = \ \displaystyle {\sum_{(i_1,\ldots,i_k)\in \{
1,\ldots,m \}^k}} E_{i_1 i_2}E_{i_2 i_3}\ldots E_{i_k i_1},
\end{equation}
$m=1, \ldots, n$.
Following \cite{DFO94}, we call the subalgebra of $U$ generated by $\{
Z_m\,|\,m=1,\ldots, n \}$ the \emph{(standard) Gelfand-Tsetlin
subalgebra} of $U$ and  denote it by  ${\Ga}$. In fact,  ${\Ga}$ is the polynomial algebra in the $\displaystyle \frac{n(n+1)}{2}$ variables $\{
c_{ij}\,|\, 1\leqslant j\leqslant i\leqslant n \}$ (\cite{Zh74}). 
\begin{Def}
\label{definition-of-GZ-modules} A finitely generated $U$-module
$M$ is called a \emph{Gelfand-Tsetlin module (with respect to
$\Gamma$)} if $M$ splits into  a direct sum
of $\Ga$-modules:

\begin{equation}
M=\bigoplus_{\sm\in\Sp\Gamma}M(\sm),
\end{equation}
where $$M(\sm)=\{v\in M| \sm^{k}v=0 \text{ for some }k\geq 0\}.$$
\end{Def}

All those $\sm\in\Sp\Gamma$ for which $M(\sm)\neq 0$ form the \emph{Gelfand-Tsetlin support} of $M$. The dimension of the subspace  $M(\sm)$ is the \emph{Gelfand-Tsetlin multiplicity} of $\sm$. 
Since the Gelfand-Tsetlin subalgebra contains the Cartan subalgebra $\h$ spanned by  $\{ E_{ii}\,|\, i=1,\ldots, n\}$ then any simple Gelfand-Tsetlin module is a weight module. 
Note that every weight module with finite dimensional weight subspaces is a Gelfand-Tsetlin module. Indeed, any weight space is invariant under the action of the Gelfand-Tsetlin subalgebra (since $\Gamma$ contains all $E_{ii}$'s) and hence decomposes 
into its generalized eigen subspaces. 
In particular, every highest weight module is a Gelfand-Tsetlin module.

\subsection{Generic Gelfand-Tsetlin modules}

\begin{Def} For a vector $v$ in $\mathbb{C}^{\frac{n(n+1)}{2}}$, by $T(v)$ we will denote the following array with entries $\{v_{ij}:1\leq j\leq i\leq n\}$ 
\begin{center}

\Stone{\mbox{ \scriptsize {$v_{n1}$}}}\Stone{\mbox{ \scriptsize {$v_{n2}$}}}\hspace{1cm} $\cdots$ \hspace{1cm} \Stone{\mbox{ \scriptsize {$v_{n,n-1}$}}}\Stone{\mbox{ \scriptsize {$v_{nn}$}}}\\[0.2pt]
\Stone{\mbox{ \scriptsize {$v_{n-1,1}$}}}\hspace{1.5cm} $\cdots$ \hspace{1.5cm} \Stone{\mbox{ \tiny {$v_{n-1,n-1}$}}}\\[0.3cm]
\hspace{0.2cm}$\cdots$ \hspace{0.8cm} $\cdots$ \hspace{0.8cm} $\cdots$\\[0.3cm]
\Stone{\mbox{ \scriptsize {$v_{21}$}}}\Stone{\mbox{ \scriptsize {$v_{22}$}}}\\[0.2pt]
\Stone{\mbox{ \scriptsize {$v_{11}$}}}\\
\medskip
\end{center}
Such an array will be called a \emph{Gelfand-Tsetlin tableau} of height $n$. 
\end{Def}
 A classical result  of Gelfand and Tsetlin \cite{GT50} gives an 
explicit realization of any simple finite dimensional $\mathfrak{gl}_n$-module in terms of Gelfand-Tsetlin tableaux. Using these Gelfand-Tsetlin formulas one can defined the class of infinite dimensional generic modules for $\mathfrak{gl}_n$, see for example \cite{DFO94}.

\begin{Def} \label{def-gen}
A vector $v \in {\mathbb C}^{\frac{n(n+1)}{2}}$ and the Gelfand-Tsetlin tableau $T(v)=T(v_{ij})$ are called \emph{generic} if
$v_{ki}-v_{kj}\notin\mathbb{Z}$ for all $1\leq i\neq j \leq k \leq n-1$. It is {\it strongly generic} if $v_{rs}-v_{rt}\notin \mathbb{Z}$ for any  $1\leq s\neq t \leq r \leq n$.
\end{Def}

\begin{Th}[{\cite[Section 2.3]{DFO94}}]\label{Generic Gelfand-Tsetlin modules}
Let $T(v) = T(v_{ij})$ be a generic  Gelfand-Tsetlin tableau of height $n$. Denote by ${\mathcal B}(T(v))$  the set of all Gelfand-Tsetlin tableaux $T(v') = T(v'_{ij})$ satisfying $v'_{nj}=v_{nj}$, $v'_{ij}-v_{ij}\in\mathbb{Z}$ for , $1\leq j\leq i \leq n-1$. 
\begin{itemize}
\item[(i)] The vector space $V(T(v)) = \Span {\mathcal B}(T(v))$ has  a structure of a $\mathfrak{gl}_n$-module with action of the generators of $\mathfrak{gl}_n$ given by the Gelfand-Tsetlin formulas.

$$E_{k,k+1}(T(v))=-\sum_{i=1}^{k}\left(\frac{\prod_{j=1}^{k+1}(v_{ki}-v_{k+1,j})}{\prod_{j\neq i}^{k}(v_{ki}-v_{kj})}\right)T(v+\delta^{ki}),$$

$$E_{k+1,k}(T(v))=\sum_{i=1}^{k}\left(\frac{\prod_{j=1}^{k-1}(v_{ki}-v_{k-1,j})}{\prod_{j\neq i}^{k}(v_{ki}-v_{kj})}\right)T(v-\delta^{ki}),$$

$$E_{kk}(T(v))=\left(k-1+\sum_{i=1}^{k}v_{ki}-\sum_{i=1}^{k-1}v_{k-1,i}\right)T(v).$$

\noindent 
where $\delta^{ij} \in \mathbb{C}^{\frac{n(n-1)}{2}}$ is defined by $(\delta^{ij})_{ij}=1$ and all other $(\delta^{ij})_{k\ell}$ are zero. 
\item[(ii)] The $\mathfrak{gl}_n$-module $V(T(v))$ is a Gelfand-Tsetlin module with all non-zero Gelfand-Tsetlin multiplicities equal to $1$. 
\end{itemize}
\end{Th}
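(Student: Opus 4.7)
The plan is to verify the two assertions in order: first that the Gelfand--Tsetlin formulas really do define a $\gl_n$-action on $V(T(v))$, and second that this action makes $V(T(v))$ into a Gelfand--Tsetlin module with all multiplicities equal to $1$. The genericity hypothesis plays a role in both parts.

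First I would check that each of the three formulas gives a well-defined linear endomorphism of $V(T(v))$. The tableaux $T(v+\delta^{ki})$ and $T(v-\delta^{ki})$ lie in ${\mathcal B}(T(v))$ because top row entries $v_{nj}$ are not disturbed, and the remaining entries are moved by $\pm 1$. The only issue is that the denominators $\prod_{j\neq i}(v'_{ki}-v'_{kj})$ must be nonzero for every tableau $T(v')\in {\mathcal B}(T(v))$. But by construction $v'_{ki}-v'_{kj}$ differs from $v_{ki}-v_{kj}$ by an integer, and genericity says the latter is not an integer for $k\leqslant n-1$, so these differences are never zero.

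Next I would verify the $\gl_n$ commutation relations, i.e.
\[
[E_{ab},E_{cd}]=\delta_{bc}E_{ad}-\delta_{ad}E_{cb}
\]
for the Chevalley-type generators $E_{k,k+1}$, $E_{k+1,k}$, $E_{kk}$ acting by the displayed formulas. Each such identity, when applied to a basis tableau $T(v')$, becomes a rational identity in the variables $v'_{ij}$ whose coefficients are polynomial. The cleanest way is to invoke the classical Gelfand--Tsetlin theorem \cite{GT50}: for every dominant integral highest weight, the identical formulas give a $\gl_n$-action on the span of the standard (betweenness-satisfying) tableaux for the associated finite-dimensional irreducible. These integral points form a Zariski-dense subset in the parameter space of all tableaux $(v_{ij})$, and both sides of each commutator identity, once one clears the universal nonvanishing denominators, are polynomials in the $v_{ij}$. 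A polynomial identity valid on a Zariski-dense subset is valid identically; hence the relations hold for any generic $T(v)$, giving a $\gl_n$-module structure on $V(T(v))$. The only subtlety to watch is that in the finite-dimensional case some coefficients vanish so that tableaux leave the ``allowed'' set harmlessly, whereas in the generic case every tableau in ${\mathcal B}(T(v))$ is honestly present; since the relations are polynomial identities, this does not affect their validity.

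Finally, for the Gelfand--Tsetlin structure, one uses the well-known fact that each generator $c_{mk}$ of $\Ga$ acts on $T(v')$ as a symmetric polynomial in the shifted entries $v'_{mi}+(m-i)$ of row $m$ (this goes back to Zhelobenko; see \cite{Zh74}), and in particular $\Ga$ acts diagonally on the basis ${\mathcal B}(T(v))$ with eigenvalue on $T(v')$ determined by the rows of $v'$. So $V(T(v))$ is automatically a direct sum of $\Ga$-eigenspaces, hence a Gelfand--Tsetlin module. The multiplicity assertion then reduces to showing that distinct tableaux in ${\mathcal B}(T(v))$ have distinct $\Ga$-characters. This is where genericity enters decisively: if $T(v')$ and $T(v'')$ have the same eigenvalue for all elementary symmetric functions of $\{v'_{mi}+(m-i)\}_{i=1}^m$ for every $m$, then the multisets of shifted entries coincide row by row; since $v'_{mi}-v'_{mj}$ is never an integer for $i\neq j$ and $m\leqslant n-1$, this forces $v'=v''$. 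The top row $m=n$ is fixed by definition of ${\mathcal B}(T(v))$. Thus every $\Ga$-eigenspace is one-dimensional, which gives (ii). The main obstacle is the bookkeeping in the Zariski-density reduction of step two; once that is set up cleanly the rest is formal.
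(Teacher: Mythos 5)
The paper does not prove this theorem; it is stated as a citation to Drozd--Futorny--Ovsienko \cite{DFO94}, so there is no in-paper argument to compare against. That said, your blind proof is a coherent and essentially correct route to the result. The three-part structure --- well-definedness from non-vanishing denominators via genericity, the $\gl_n$ commutator relations by Zariski density from the classical integral (finite-dimensional) case, and the $\Gamma$-diagonalization together with multiplicity one from the Zhelobenko/Perelomov--Popov description of the centers $Z_m$ --- is a standard and valid alternative to the direct verification in \cite{DFO94} and to the approach via the Khomenko embedding of $U(\gl_n)$ into a localized Weyl algebra, which this paper invokes for a related purpose in the proof of Theorem \ref{Strongly generic submodule has trivial annihilator}. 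The density argument buys you conceptual economy at the cost of some bookkeeping, which you flag yourself.

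Two points deserve attention. First, in the density step one should be explicit that the comparison with the finite-dimensional case is made at tableaux lying in the \emph{interior} of the allowed polytope (so that all neighboring tableaux $T(v'\pm\delta^{ki})$ are also allowed and the commutator identity is an honest coefficient-by-coefficient rational identity there); this locus is still Zariski dense as the highest weight grows, and then the identity propagates to all of $\mathbb{C}^{n(n+1)/2}$. Second, and more substantively, Definition \ref{definition-of-GZ-modules} requires a Gelfand--Tsetlin module to be \emph{finitely generated} over $U(\gl_n)$, and your argument for part (ii) does not address this. Since generic $V(T(v))$ typically has infinite-dimensional weight spaces, finite generation is not automatic; one needs to observe, e.g., that the set $\Omega(T(v))$ of ``integral walls'' is finite, so ${\mathcal B}(T(v))$ decomposes into finitely many pieces each reachable from a single tableau by the GT operators, whence $V(T(v))$ is generated by finitely many tableaux. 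With that remark added, your proof is complete.
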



\begin{Def} For any Gelfand-Tsetlin tableau $T(v)$ of height $n$, we associate the following sets:
\begin{align} 
\Omega(T(v))&:=\{(r,s,t):v_{rs}-v_{r-1,t}\in\mathbb{Z}\}\\
\Omega^{+}(T(v))&:=\{(r,s,t) :v_{r,s}-v_{r-1,t}\in\mathbb{Z}_{\geq 0}\}.
\end{align}
\end{Def}

\begin{Th}[{\cite[Theorem 6.14]{FGR15}}]\label{characterization of irreducible basis} Let $T(v) = T(v_{ij})$ be a generic  Gelfand-Tsetlin tableau of height $n$.
The simple subquotient of $V(T(v))$ which contains a tableau $T(v')\in \mathcal{B}(T(v))$ has the following basis of tableaux:
$$\mathcal{I}(T(v'))=\{T(v'')\in\mathcal{B}(T(v)): \Omega^{+}(T(v''))=\Omega^{+}(T(v'))\}.$$
The action of $\mathfrak{gl}_n$ on this simple module is given by the classical Gelfand-Tsetlin formulas.
\end{Th}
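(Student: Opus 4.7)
The plan is to construct the simple subquotient explicitly as the quotient of two natural submodules of $V(T(v))$ filtered by the combinatorial invariant $\Omega^+$.

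Given any set of triples $\Xi$, I would define $M^+_\Xi := \Span\{T(w)\in\mathcal{B}(T(v)) : \Omega^+(T(w))\supseteq \Xi\}$ and $M^{++}_\Xi := \sum_{\Xi'\supsetneq \Xi}M^+_{\Xi'}$. The first task is to verify that both are $\gl_n$-submodules of $V(T(v))$; granted this, the quotient $M^+_\Xi/M^{++}_\Xi$ has basis exactly $\{T(w):\Omega^+(T(w))=\Xi\}$. Specializing $\Xi=\Omega^+(T(v'))$ then yields basis $\mathcal{I}(T(v'))$, reducing the problem to (a) submodule invariance and (b) simplicity of the quotient.

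For (a), I would analyze how a single $E_{k,k+1}$- or $E_{k+1,k}$-action, which shifts one entry $v_{ki}$ by $\pm 1$, changes $\Omega^+$. Only the ``cross-row'' triples $(k,i,t)$ and $(k+1,s,i)$ are affected. The crucial observation is that the numerator of the Gelfand-Tsetlin coefficient, namely $\prod_{j=1}^{k+1}(v_{ki}-v_{k+1,j})$ for $E_{k,k+1}$ and $\prod_{j=1}^{k-1}(v_{ki}-v_{k-1,j})$ for $E_{k+1,k}$, vanishes precisely at the boundary values where the shift would push a triple out of $\Omega^+$ (since at $v_{ki}=v_{k\pm 1,j}$ the shift drops the corresponding difference from $0$ to $-1$). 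Meanwhile the denominators $\prod_{j\neq i}(v_{ki}-v_{kj})$ never vanish on $\mathcal{B}(T(v))$ by genericity, since the integer shifts preserve the non-integrality of row-differences. Hence the action can only preserve or enlarge $\Omega^+$, never shrink it, which gives the submodule property of both $M^+_\Xi$ and $M^{++}_\Xi$.

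The harder step (b) is to show that $\bar M := M^+_\Xi/M^{++}_\Xi$ is simple. Equivalently, starting from any $T(v_1)\in \mathcal{I}(T(v'))$ one must reach every other $T(v_2)\in \mathcal{I}(T(v'))$ by successive applications of $E_{k,k\pm 1}$ modulo $M^{++}_\Xi$. The admissible intermediate moves are those shifts that keep $\Omega^+$ equal to $\Xi$; one checks that these are exactly the unit steps in the integer lattice $\Z^{n(n-1)/2}$ of perturbations of $T(v')$ which stay inside the ``$\Omega^+$-cell'' of $T(v')$. A row-by-row combinatorial argument, using that the top row is fixed across $\mathcal{B}(T(v))$ and that within each modifiable row the set of $\Omega^+$-preserving positions forms a product of intervals, produces the connecting path. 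This connectivity is the \emph{main obstacle}: one must verify at every step that the required shift has nonzero numerator (automatic provided the move is not itself an $\Omega^+$-boundary move, in which case its image already lies in $M^{++}_\Xi$ and need not be used) and that the resulting tableau still lies in $\mathcal{I}(T(v'))$. Once simplicity is established, $\bar M$ is the unique simple subquotient of $V(T(v))$ containing the image of $T(v')$, and its action on the basis $\mathcal{I}(T(v'))$ is inherited from the Gelfand-Tsetlin formulas of Theorem \ref{Generic Gelfand-Tsetlin modules}, with all terms landing in $M^{++}_\Xi$ killed in the quotient.
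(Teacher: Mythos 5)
The paper does not prove this result itself; it cites it from Theorem~6.14 of \cite{FGR15}. So I am comparing your proposal against the standard argument rather than a proof reproduced in this paper.

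Your overall strategy (filter $V(T(v))$ by $\Omega^+$-cells, observe that the Gelfand--Tsetlin coefficients vanish precisely at the shifts that would drop a triple from $\Omega^+$, conclude submodule invariance, then obtain the simple quotient between consecutive filtration pieces) is the right shape, and your part~(a) is essentially correct: under $E_{k,k+1}$ the only triples that could leave $\Omega^+$ are those of the form $(k+1,s,i)$, and the factor $(v_{ki}-v_{k+1,s})$ in the numerator kills exactly those terms; the symmetric statement holds for $E_{k+1,k}$ with the factors $(v_{ki}-v_{k-1,j})$. The denominators are nonzero across $\mathcal{B}(T(v))$ by genericity. So $M^+_\Xi$ and $M^{++}_\Xi$ are indeed submodules, and the quotient has the asserted basis with the induced Gelfand--Tsetlin action.

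There is, however, a genuine gap in part~(b). You claim simplicity is \emph{equivalent} to connectivity of the tableau graph inside one $\Omega^+$-cell. That equivalence is false in general: a nonzero element of $\bar M$ is an arbitrary linear combination of tableaux, and connectivity alone does not let you isolate a single tableau from such a combination. The missing ingredient is the Gelfand--Tsetlin subalgebra $\Gamma$: by Theorem~\ref{Generic Gelfand-Tsetlin modules}(ii), all $\Gamma$-multiplicities of $V(T(v))$ (hence of $\bar M$) are at most one, so distinct tableaux in $\mathcal{B}(T(v))$ have distinct $\Gamma$-characters. Consequently every $U(\gl_n)$-submodule of $\bar M$, being $\Gamma$-invariant, is spanned by a subset of the tableau basis; this is what legitimately reduces simplicity to the tableau-connectivity statement, and it is also what makes the ``unique simple subquotient containing $T(v')$'' assertion meaningful. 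Without invoking $\Gamma$-separation your argument does not go through. In addition, you identify connectivity itself as ``the main obstacle'' and then only sketch it (``a row-by-row combinatorial argument\ldots produces the connecting path''); that step needs an actual argument, and the claim that the $\Omega^+$-preserving positions in a row ``form a product of intervals'' requires care because the constraints couple row $k$ to both rows $k-1$ and $k+1$. So the proposal captures the correct framework but omits the $\Gamma$-multiplicity-one ingredient, which is an essential idea and not just a missing detail, and leaves the connectivity lemma unproved.
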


\begin{Co}\label{Co: Basis for generic Verma} If $\lambda=(\lambda_{1},\ldots,\lambda_{n})$ is a $\mathfrak{gl}_n$ weight such that $\lambda_{i}-\lambda_{j}\notin \mathbb{Z}$ then, the Verma module $M(\lambda)$ is simple and admits a tableau realization as a subquotient of a generic module $V(T(v))$ with basis 
$$\mathcal{I}(T(v))=\{T(v')\in\mathcal{B}(T(v)):\Omega^{+}(T(v'))=\{(r,s,s):1\leq s\leq r\leq n\}\}$$

where $c_{i}=\lambda_{i}-i+1$ and $T(v)$ is the generic tableau with entries $v_{rs}=c_{s}$.
\end{Co}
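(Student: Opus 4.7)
The plan is to identify $M(\lambda)$ with the simple subquotient of $V(T(v))$ containing the ``constant-column'' tableau $T(v)$, and then to read off its basis directly from Theorem~\ref{characterization of irreducible basis}.

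First I would verify simplicity of $M(\lambda)$ via the standard criterion for $\gl_n$: the Verma module $M(\lambda)$ is simple iff $\langle\lambda+\rho,\alpha^{\vee}\rangle\notin\mathbb{Z}_{>0}$ for every positive root $\alpha$. For $\alpha=\epsilon_i-\epsilon_j$ with $i<j$ this pairing is $\lambda_i-\lambda_j+(j-i)$, which is not even an integer under the hypothesis $\lambda_i-\lambda_j\notin\mathbb{Z}$. Hence $M(\lambda)=L(\lambda)$.

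Next I would check that $T(v)$ is a highest-weight vector of weight $\lambda$ inside $V(T(v))$. The entries $v_{rs}=c_s=\lambda_s-s+1$ are generic in the sense of Definition~\ref{def-gen} because $c_s-c_t=\lambda_s-\lambda_t+(t-s)\notin\mathbb{Z}$ for $s\neq t$, so Theorem~\ref{Generic Gelfand-Tsetlin modules} applies and the $\gl_n$-action is given by the Gelfand-Tsetlin formulas. A direct computation from the diagonal formula gives $E_{kk}T(v)=\lambda_k T(v)$. In the raising formula for $E_{k,k+1}$, the numerator of the $i$-th summand contains the factor $v_{ki}-v_{k+1,i}=c_i-c_i=0$, so $E_{k,k+1}T(v)=0$ for every $k$, confirming that $T(v)$ is a highest-weight vector. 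I would then compute $\Omega^{+}(T(v))$: since $v_{rs}-v_{r-1,t}=c_s-c_t$ lies in $\mathbb{Z}_{\geq 0}$ exactly when $s=t$ (because $c_s-c_t\notin\mathbb{Z}$ for $s\neq t$, and equals $0$ when $s=t$), one obtains precisely $\Omega^{+}(T(v))=\{(r,s,s):1\leq s\leq r\leq n\}$.

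Putting these pieces together, Theorem~\ref{characterization of irreducible basis} identifies the simple subquotient of $V(T(v))$ containing $T(v)$ as the subspace spanned by $\mathcal{I}(T(v))$ described in the statement, on which $\gl_n$ acts by the Gelfand-Tsetlin formulas. By the previous step this simple subquotient is a simple highest-weight module of weight $\lambda$, hence equals $L(\lambda)=M(\lambda)$ by the first step. There is no substantive obstacle: the only nontrivial point is the vanishing $E_{k,k+1}T(v)=0$, which is immediate once one notices that each Gelfand-Tsetlin coefficient contains the factor $c_i-c_i$ in its numerator; everything else is routine bookkeeping combined with the already-established Theorems~\ref{Generic Gelfand-Tsetlin modules} and~\ref{characterization of irreducible basis}.
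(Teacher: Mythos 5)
Your proof is correct, and it fills in exactly the details the paper leaves implicit: the paper states this as an immediate consequence of Theorem~\ref{characterization of irreducible basis} with no written argument. Your three observations — that the simplicity criterion $\langle\lambda+\rho,\alpha^\vee\rangle\notin\mathbb{Z}_{>0}$ is automatic when $\lambda_i-\lambda_j\notin\mathbb{Z}$, that the Gelfand--Tsetlin coefficient in $E_{k,k+1}T(v)$ contains the factor $v_{ki}-v_{k+1,i}=c_i-c_i=0$ so $T(v)$ is a highest-weight vector of weight $\lambda$, and that $\Omega^{+}(T(v))$ consists precisely of the diagonal triples — are exactly what is needed, and then Theorem~\ref{characterization of irreducible basis} identifies the simple subquotient through $T(v)$ with $L(\lambda)=M(\lambda)$. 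One tiny cosmetic note: the set $\{(r,s,s):1\leq s\leq r\leq n\}$ in the statement implicitly excludes the degenerate triples with $s=r$ (since $v_{r-1,r}$ does not exist), so the meaningful content is $\{(r,s,s):1\leq s\leq r-1,\ 2\leq r\leq n\}$; your computation gives precisely this set, so everything matches.
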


\section{Admissible generic modules in the principal nilpotent orbit}
We set $G:=S_n\times S_{n-1} \times \cdots S_1$ and the $i$-th component of $\sigma \in G$ will be denoted by $\sigma[i]$. Note that for any $\sigma=(\sigma[n],\sigma[n-1],\ldots,\sigma[1]) \in G$ we have $V(T(v))\simeq V(T(\sigma(v)))$. We immediately have
\begin{Lem}\label{strongly generic and Verma conditions}
Let $T(v)$ be any strongly generic Gelfand-Tsetlin tableau. There exists $\sigma \in G$ such that $\Omega(T(\sigma(v)))\subseteq\{(r,s,s):1\leq s\leq r\leq n\}$. 
\end{Lem}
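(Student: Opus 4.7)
The plan is to construct $\sigma=(\sigma_n,\sigma_{n-1},\ldots,\sigma_1)\in G$ row by row, starting at row $1$ and working upward, in such a way that after permuting row $r$ by $\sigma_r$ the only integer relations left between rows $r$ and $r-1$ occur at matching indices. I will use the convention $(\sigma v)_{r,s}=v_{r,\sigma_r^{-1}(s)}$, so that
\[
(r,s,t)\in\Omega(T(\sigma v))\iff v_{r,\sigma_r^{-1}(s)}-v_{r-1,\sigma_{r-1}^{-1}(t)}\in\mathbb{Z}.
\]

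First I would observe that strong genericity forces the entries in each single row to lie in pairwise distinct cosets of $\mathbb{C}/\mathbb{Z}$. Consequently, for every $r\geq 2$, the set
\[
R_r:=\{(a,b)\in\{1,\ldots,r\}\times\{1,\ldots,r-1\}\mid v_{r,a}-v_{r-1,b}\in\mathbb{Z}\}
\]
is a partial matching: if $(a,b),(a,b')\in R_r$ with $b\neq b'$, then $v_{r-1,b}\equiv v_{r-1,b'}\pmod{\mathbb{Z}}$, contradicting strong genericity; the symmetric argument uses row $r$.

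Next I would proceed by induction on $r$. Set $\sigma_1=\on{id}$. Given $\sigma_{r-1}$, I define $\sigma_r$ on $\on{dom}(R_r)\subset\{1,\ldots,r\}$ by
\[
\sigma_r(a):=\sigma_{r-1}(b)\quad\text{whenever }(a,b)\in R_r.
\]
This prescription is well-defined and injective because $R_r$ is a partial matching and $\sigma_{r-1}$ is a bijection; the resulting image lies in $\{1,\ldots,r-1\}$ and has cardinality $|R_r|\leq r-1$. The complements of the domain in $\{1,\ldots,r\}$ and of the image in $\{1,\ldots,r\}$ have the same size $r-|R_r|$, so this partial injection extends (freely) to a full permutation $\sigma_r\in S_r$.

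Finally, to verify that this $\sigma$ works, note that if $(r,s,t)\in\Omega(T(\sigma v))$, then by definition $(\sigma_r^{-1}(s),\sigma_{r-1}^{-1}(t))\in R_r$, and by construction $\sigma_r(\sigma_r^{-1}(s))=\sigma_{r-1}(\sigma_{r-1}^{-1}(t))$, i.e.\ $s=t$. The argument is essentially combinatorial, and the only points requiring care are the existence of the extension at each inductive step (immediate by counting) and the bookkeeping of indices for the action of $G$; there is no substantive obstacle.
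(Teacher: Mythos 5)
Your proof is correct and fills in the details that the paper elides (the paper writes ``We immediately have'' and gives no proof). The argument is essentially the only sensible one: strong genericity makes each $R_r$ a partial bijection, and one builds $\sigma$ row by row so that the bijections $\sigma_{r}$ and $\sigma_{r-1}$ respect it. The counting step you flag (extending the partial injection to a full permutation) is indeed the only place requiring a moment's thought, and you justify it correctly.
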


In \cite{Kho05} is constructed an embedding of $U(\mathfrak{gl}_n)$ into a certain localization $\mathcal{D}_{m}$ of the Weyl algebra $\mathcal{A}_{m}$, for $m=n(n+1)/2$ with generators $\{x_{ij},\partial_{ij}:1\leq j\leq i\leq n\}$ (for details see \cite[Theorem 3.2]{Kho05}) based on Gelfand-Tsetlin formulas. Denote by $\phi$ this embedding and $\mathcal{P}$ the polynomial algebra in variables $y_{ij}=\partial_{ij}x_{ij}$. This embedding essentially provides a tableaux realization.
As an application of the construction of this embedding  an alternative proof of Duflo's Theorem (which describes the annihilators of Verma modules c.f,\ \S \ref{subsection:Admissible representations and nilpotent orbits}) for $\mathfrak{gl}_n$ is given. In particular,  the description of annihilators of generic Verma modules is given in (\cite[Theorem 4.1]{Kho05}) together with their explicit tableaux realization  (Corollary \ref{Co: Basis for generic Verma}). In order to describe annihilators of simple subquotients of strongly generic modules we will use a slight modification of the proof of Theorem 4.1 in \cite{Kho05}.

\begin{Th}\label{Strongly generic submodule has trivial annihilator}
Let $T(v)$ be any strongly generic Gelfand-Tsetlin tableau. For any simple subquotient $N$ of $V(T(v))$ we have $$Ann_{U(\mathfrak{gl}_n)}N=U(\mathfrak{gl}_n)Ann_{Z(\mathfrak{gl}_n)}N.$$ 
\end{Th}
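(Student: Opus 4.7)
The plan is to adapt Khomenko's proof of \cite[Theorem 4.1]{Kho05} (which establishes the analogous statement for generic Verma modules) to arbitrary simple subquotients of strongly generic Gelfand-Tsetlin modules. The inclusion $\mathrm{Ann}_{U(\mathfrak{gl}_{n})}N \supseteq U(\mathfrak{gl}_{n})\mathrm{Ann}_{Z(\mathfrak{gl}_{n})}N$ is automatic (the center acts by scalars on the simple module $N$), so all the work lies in the reverse inclusion. The key extra input, beyond \cite{Kho05}, is Lemma \ref{strongly generic and Verma conditions}, which, together with the isomorphism $V(T(v)) \simeq V(T(\sigma(v)))$ for $\sigma \in G$, lets us put $T(v)$ in a convenient normal form before running the annihilator argument.

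First I would apply Lemma \ref{strongly generic and Verma conditions} to choose $\sigma \in G$ such that $\Omega(T(\sigma(v))) \subseteq \{(r,s,s):1\leq s\leq r\leq n\}$ and replace $T(v)$ by $T(\sigma(v))$. By Theorem \ref{characterization of irreducible basis}, every simple subquotient $N$ of $V(T(v))$ then admits an explicit tableau basis $\mathcal{I}(T(v'))$ cut out by a single $\Omega^{+}$-class, with the classical Gelfand-Tsetlin formulas giving the action of $\mathfrak{gl}_{n}$. Crucially, strong genericity --- which adds the condition $v_{ns}-v_{nt}\notin\mathbb{Z}$ for $s\neq t$ on the top row --- ensures that every denominator $\prod_{j\neq i}(v_{ki}-v_{kj})$ appearing in the Gelfand-Tsetlin coefficients is nonzero on \emph{every} tableau of $\mathcal{B}(T(v))$, which is precisely the non-degeneracy property used throughout Khomenko's argument.

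Next I would invoke Khomenko's embedding $\phi\colon U(\mathfrak{gl}_{n})\hookrightarrow \mathcal{D}_{m}$ and the commutative subalgebra $\mathcal{P}=\mathbb{C}[y_{ij}]$. In \cite[Theorem 4.1]{Kho05} one shows that, for a generic Verma module, any $X\in \mathrm{Ann}_{U(\mathfrak{gl}_{n})}M(\lambda)$ satisfies: $\phi(X)$, acting on the polynomial realization and reduced modulo the central generators evaluated at the central character $\chi_{\lambda}$, has a symbol in $\mathcal{P}$ that vanishes identically, and this forces $X\in U(\mathfrak{gl}_{n})\ker\chi_{\lambda}$. I would repeat the same argument with $M(\lambda)$ replaced by the simple subquotient $N\subseteq V(T(v))$: pick any $T(v^{\circ})\in \mathcal{I}(T(v'))$, analyze the matrix coefficients of $\phi(X)$ expressed in the tableau basis of $N$, and use the nonvanishing of Gelfand-Tsetlin coefficients (guaranteed by strong genericity) to reduce the equation $\phi(X)T(v^{\circ})=0$ to the same system of polynomial identities that appears in Khomenko's generic Verma case, forcing $X \in U(\mathfrak{gl}_{n})\ker\chi_{\lambda}$.

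The main obstacle I anticipate is verifying that Khomenko's linear-algebra step --- which crucially uses certain free shift operators on the full Verma tableau basis --- remains valid when restricted to the sublattice $\mathcal{I}(T(v'))$ of $\Omega^{+}$-preserving tableaux. The forbidden shifts (those that change $\Omega^{+}$) are precisely the ones whose Gelfand-Tsetlin coefficients involve integer differences recorded in $\Omega$, while the allowed shifts have coefficients kept nonzero by strong genericity; thus $\mathcal{I}(T(v'))$ still contains enough translates of $T(v^{\circ})$ to carry out Khomenko's separation argument. Once this compatibility is checked, the remainder of the proof of \cite[Theorem 4.1]{Kho05} should apply with only cosmetic changes.
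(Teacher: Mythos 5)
Your proposal follows essentially the same route as the paper: normalize $T(v)$ via Lemma~\ref{strongly generic and Verma conditions}, pass to the tableau basis $\mathcal{I}(T(v'))$ from Theorem~\ref{characterization of irreducible basis}, use Khomenko's embedding $\phi$, and reduce to the polynomial-vanishing argument of \cite[Theorem 4.1]{Kho05}. You also correctly flag the one genuinely new issue — that Khomenko's shift argument must be confined to the sublattice $\mathcal{I}(T(v'))$ rather than the full Verma basis — which is exactly the point the paper addresses. The only place you are imprecise is in the mechanism you offer to resolve that issue: you appeal to nonvanishing of Gelfand--Tsetlin coefficients for ``allowed shifts,'' but that is not quite the right reason. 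What is actually needed, and what the paper does, is to write $\phi(u)=\sum_{k=1}^K D_k f_k$ with bounded shift exponents $|n_{ijk}|\le L$, and then restrict attention to the set $B$ of tableaux $T(w)\in\mathcal{I}(N)$ satisfying $|w_{rs}-w_{r-1,s}|>L$ for every $(r,s,s)\in\Omega(T(v))$; for such $w$ \emph{all} the shifted tableaux $T(w+\sum n_{ijk}\delta^{ij})$ remain inside $\mathcal{I}(N)$, so $uT(w)=0$ forces each $f_k\in I_w\tilde{\mathcal{P}}$, and $B$ is large enough (Zariski-dense in the relevant sense) for Khomenko's final step to go through unchanged. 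So: right approach, right obstacle identified, but replace your ``coefficients nonzero'' heuristic by the explicit interior set $B$.
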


\begin{proof}
By Lemma \ref{strongly generic and Verma conditions} we can assume without loss of generality that $\Omega(T(v))\subseteq\{(r,s,s):1\leq s\leq r\leq n\}$. Let $N$ be a simple subquotient of $V(T(v))$, by Theorem \ref{characterization of irreducible basis}, there exist $A\subseteq\Omega(T(v))$ such that $N$ is spanned by the set of tableaux $\mathcal{I}(N)=\{ T(w)\in\mathcal{B}(T(v)) :\ \Omega^{+}(T(w))=A\}$. 

Let $I=U(\mathfrak{gl}_n)Ann_{Z(\mathfrak{gl}_n)}N$. It is clear that $I\subseteq Ann_{U(\mathfrak{gl}_n)}N$. To prove the opposite inclusion, take some $u\in U(\mathfrak{gl}_n)$ such that $uN=0$. The image of $u$ under the inclusion $\phi$ can be written as $\phi(u)=\sum_{k=1}^{K}D_{k}f_{k}$ for some $K\in\mathbb{Z}_{\geq 0}$, $f_{k}$ in certain localization $\tilde{\mathcal{P}}$ of $\mathcal{P}$ and $D_{k}=\prod_{i=1}^{n-1}\prod_{j=1}^{i}\partial_{ij}^{n_{ijk}}$ with  $n_{ijk}\in\mathbb{Z}$. 
For $w\in\{v+z:z\in\mathbb{Z}^{\frac{n(n-1)}{2}}\}$, let $I_{w}\subset \mathcal{P}$ be the ideal of $\mathcal{P}$ generated by $y_{ij}-w_{ij}$, for $1\leq j\leq i\leq n$. One gets that $I_{w}\tilde{\mathcal{P}}$ is a proper maximal ideal of $\tilde{\mathcal{P}}$ for any $w$ such that $T(w)\in B$. For each rational function $f\in \tilde{\mathcal{P}}$ and $T(w)\in B$ define the evaluation $f(w)$ as the image of $f$ under the canonical projection $\tilde{\mathcal{P}}\to \tilde{\mathcal{P}}/I_{w}\tilde{\mathcal{P}}\simeq\mathbb{C}$. 
Let 
$$L=\max\{|n_{ijk}| : 1\leq k\leq K, 1\leq i\leq n,1\leq j\leq i\},$$
$$B=\{T(w)\in \mathcal{I}(N):|w_{rs}-w_{r-1,s}|>L,\text{ for any } (r,s,s)\in \Omega(T(v))\}$$

By construction, one easily obtains that for all $T(w)\in B$ the action of $u$ on $T(w)$ is given by 
\begin{align}
uT(w)=\sum_{k=1}^{K}f_{k}(w)T\left(w+\sum_{i=1}^{n-1}\sum_{j=1}^{i}n_{ijk}\delta^{ij}\right).
\end{align}
Due to the choice of $T(w)\in B$ we have $T\left(w+\sum_{i=1}^{n-1}\sum_{j=1}^{i}n_{ijk}\delta^{ij}\right)\in \mathcal{I}(N)$ for any $k$, which implies  $uT(w)=0$ if and only if $f_{k}\in I_{w}\tilde{\mathcal{P}}$ for all $1\leq k\leq K$. Finally, by using just the fact that the module is generic, it is shown in (\cite[Theorem 4.1]{Kho05}) that $f_{k}\in I_{w}\tilde{\mathcal{P}}$ for all $1\leq k\leq K$ implies $u\in I$.
\end{proof}

\subsection{Admissible generic Gelfand-Tsetlin modules
in the principal nilpotent orbit for $\mathfrak{sl}_n$}
Let 
$k$ be an admissible number for $\widehat{\mf{sl}}_n$,
so that 
\begin{align}
k+n=p/q,\quad  p\geq n,\ q\geq 1,\ (p,q)=1.
\end{align}
By Proposition \ref{Pro:principal},
an element of 
$[\overline{Pr}_k^{\mathbb{O}_{prin}}]$
for $\fing=\mathfrak{sl}_{n}$
is represented by the  weight of the form 
\begin{align}
\lambda'=\lambda-\frac{p}{q}(\mu+\rho)=\left(\lambda_{1}-\frac{p}{q}(\mu_{1}+1),\lambda_{2}-\frac{p}{q}(\mu_{2}+1),\ldots,\lambda_{n-1}-\frac{p}{q}(\mu_{n-1}+1)\right),
\end{align}
where $\lambda_{i},\mu_{i}\in\mathbb{Z}_{\geq 0}$ are such that $\lambda_{1}+\ldots+\lambda_{n-1}\leq p-n$, $\mu_{1}+\ldots+\mu_{n-1}\leq q-n$. 
(The set $[\overline{Pr}_k^{\mathbb{O}_{prin}}]$ is empty if $q<n$.)

A $\mathfrak{gl}_n$-weight corresponding to $\lambda'$ is $(b_{1}+b,\ldots,b_{n}+b)$, where for $1\leq k\leq n-1$, we take $b_{i}:=\sum_{k=i}^{n-1}(\lambda_{k}-\frac{p}{q}(\mu_{k}+1))$, $b_{n}=0$ and $b=-\frac{1}{n}\sum_{i=1}^{n}b_{i}$. Note that for any $i<j$, $b_{i}-b_{j}=\sum_{k=i}^{j-1}(\lambda_{k}-\frac{p}{q}(\mu_{k}+1))\notin\mathbb{Z}$. In particular, the corresponding simple highest weight module associated with $\lambda'$ admits a tableaux realization as a subquotient of $V(T(v))$ with the strongly generic  $T(v)$, where $v_{ij}=b_{j}+b-j+1$ for any $1\leq j\leq i\leq n$.\\

The highest weight modules with the same annihilator as $L(\lambda')$ are all of the form $L(w\circ \lambda')$ with $w$ an element of the Weyl group. All these highest weight modules are simple Verma modules associated with some $\mathfrak{gl}_n$-weight of the form $(a_1,a_{2},\ldots,a_{n})$ satisfying $a_{i}-a_{j}\notin\mathbb{Z}$ for any $i\neq j$. Thus, in order to describe all simple Gelfand-Tsetlin modules with the same annihilator as $L(\lambda')$ it is enough to describe simple Gelfand-Tsetlin modules with the same annihilator as a generic Verma module obtained by a tableaux realization as a subquotient of the module $V(T(v))$ with a strongly generic tableau 
$T(v)$ with entries $
v_{ij}:=c_j$  for any $1\leq j\leq i\leq n$.
 
 Hence, Theorem \ref{Strongly generic submodule has trivial annihilator} immediately implies 
 
\begin{Co}\label{cor-generic}
Let $\lambda'$ be an admissible $\mathfrak{sl}_n$-weight of level $k$ and $(a_{1},\ldots,a_{n})$ any corresponding $\mathfrak{gl}_n$-weight. 
Then any   simple subquotient of a strongly generic module $\{V(T(v)):v_{ni}=a_{i}-i+1\}$ is a simple admissible Gelfand-Tsetlin module in the principal orbit. 
Moreover, any simple admissible generic Gelfand-Tsetlin module of level $k$ in the principal nilpotent orbit with central character determined by the $\mathfrak{gl}_n$-highest weight $(a_{1},\ldots,a_{n})$ is isomorphic to a subquotient of 
some strongly generic module $\{V(T(v)):v_{ni}=a_{i}-i+1\}$. 
\end{Co}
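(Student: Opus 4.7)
The plan is to establish the two asserted directions separately, reducing each to the annihilator criterion of Theorem \ref{Th:classification-of-admissible-modules} via the annihilator computation supplied by Theorem \ref{Strongly generic submodule has trivial annihilator}.

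For the forward direction, let $N$ be a simple subquotient of $V(T(v))$, where $T(v)$ is the strongly generic tableau with $v_{ni}=a_i-i+1$. Strong genericity is guaranteed by the computation $b_i-b_j=\sum_{k=i}^{j-1}(\lambda_k-\frac{p}{q}(\mu_k+1))\notin\mathbb{Z}$ recorded in the lead-in to the corollary. Since the action of $Z(\mathfrak{gl}_n)$ on a Gelfand-Tsetlin module is read off from the top row via the Gelfand-Tsetlin formulas, $N$ has the same $Z(\mathfrak{gl}_n)$-character as the Verma module of $\mathfrak{gl}_n$-highest weight $(a_1,\dots,a_n)$; restricting to $\mathfrak{sl}_n$ this becomes exactly $\chi_{\lambda'}$. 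Theorem \ref{Strongly generic submodule has trivial annihilator} then yields $\on{Ann}_{U(\mathfrak{gl}_n)}N=U(\mathfrak{gl}_n)\on{Ann}_{Z(\mathfrak{gl}_n)}N$, whence $\on{Ann}_{U(\mathfrak{sl}_n)}N=U(\mathfrak{sl}_n)\ker\chi_{\lambda'}$. Because $\lambda'\in\overline{Pr}_k^{\mathbb{O}_{prin}}$, Proposition \ref{Pro:equiv-class-adm} forces this ideal to equal $J_{\lambda'}$, and Theorem \ref{Th:classification-of-admissible-modules} then certifies $N$ as admissible. The fact that $\on{Var}(J_{\lambda'})=\overline{\mathbb{O}_{prin}}$ (since $\lambda'\in\overline{Pr}_k^{\mathbb{O}_{prin}}$) places $N$ in the principal nilpotent orbit.

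For the converse, let $M$ be a simple admissible generic Gelfand-Tsetlin $\mathfrak{sl}_n$-module in the principal nilpotent orbit whose central character is determined by $(a_1,\dots,a_n)$. Theorem \ref{Th:classification-of-admissible-modules} combined with Proposition \ref{Pro:equiv-class-adm} gives $\on{Ann}_{U(\mathfrak{sl}_n)}M=J_\nu$ with $\nu=w\circ\lambda'$ for some $w\in W$. The tableau realization in Theorem \ref{characterization of irreducible basis} then identifies $M$ with a simple subquotient of some generic $V(T(v'))$ whose top row is a permutation of $\{a_i-i+1\}_{i=1}^n$, since the central character pins down the top row multiset. Finally, the $G$-symmetry $V(T(v'))\cong V(T(\sigma v'))$ noted at the start of the section permits rearranging the top row so that $v'_{ni}=a_i-i+1$, as required.

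The principal technical subtlety is the central character bookkeeping: one must carefully match the eigenvalues of $Z(\mathfrak{gl}_n)$ computed from the top row via the Gelfand-Tsetlin formulas against the $\mathfrak{sl}_n$-central character $\chi_{\lambda'}$, and verify that the conclusion of Theorem \ref{Strongly generic submodule has trivial annihilator}, which is a statement inside $U(\mathfrak{gl}_n)$, descends faithfully to a statement about $U(\mathfrak{sl}_n)$-annihilators. A secondary concern is completeness of the tableau realization in the converse direction: one needs that \emph{every} simple generic Gelfand-Tsetlin module with a prescribed central character arises as a subquotient of some strongly generic $V(T(v'))$, which is a known consequence of the classification in \cite{FGR15} but should be cited explicitly rather than reproved.
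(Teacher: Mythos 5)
Your proof is correct and follows essentially the same route as the paper: reduce everything to the annihilator criterion, match central characters (which are read off from the top row of the tableau), and invoke Theorem \ref{Strongly generic submodule has trivial annihilator} to see that the $U(\mathfrak{gl}_n)$-annihilator of any simple subquotient is centrally generated, hence descends to $U(\mathfrak{sl}_n)\ker\chi_{\lambda'}=J_{\lambda'}$. One small citation imprecision: the identity $U(\mathfrak{sl}_n)\ker\chi_{\lambda'}=J_{\lambda'}$ does not follow from Proposition \ref{Pro:equiv-class-adm} alone (which only gives that $J_{\lambda'}$ is the unique \emph{maximal} two-sided ideal containing $U(\mathfrak{g})\ker\chi_{\lambda'}$); the equality is the characterization in the ``Principal nilpotent orbit'' subsection, where $\lambda'\in\overline{Pr}_k^{\mathbb{O}_{prin}}$ is equivalent to $\widehat{\lambda'}$ being non-degenerate, so $L(\lambda')=M(\lambda')$ and $J_{\lambda'}=\mathrm{Ann}_{U(\mathfrak{g})}M(\lambda')=U(\mathfrak{g})\ker\chi_{\lambda'}$. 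Your self-diagnosed concern at the end is also the right one to flag: the converse direction implicitly uses that every simple generic Gelfand--Tsetlin module with the prescribed (integrally-separated) top row is a subquotient of a strongly generic $V(T(v'))$, which the paper treats as part of the generic theory of \cite{DFO94,FGR15} rather than deriving from Theorem \ref{characterization of irreducible basis}.
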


Now applying Theorem \ref{characterization of irreducible basis} we obtain a complete classification of all simple admissible generic Gelfand-Tsetlin modules of level $k$ in the principal orbit together with their explicit construction
 via the  Gelfand-Tsetlin formulas.

\begin{Rem}
\begin{itemize}
\item
We observe that there exist  simple generic Gelfand-Tsetlin modules which  are not in  the principal orbit.
\item Among the modules described in Corollary \ref{cor-generic} there are those with finite weight multiplicities. Such modules are either highest weight modules or induced from an $\mathfrak{sl}_2$-subalgebra. The latter modules will be also discussed in  Theorem \ref{Th: Admissible induced by sl(2)}. 

\end{itemize}
\end{Rem}

\section{Classification of simple admissible Gelfand-Tsetlin $\mathfrak{sl}_3$-modules}
Let $k$ be an admissible number for $\g=\mf{sl}_3$,
so that 
\begin{align}
 k+3=p/q,\quad p\geq 3,\ q\geq 1, \ (p,q)=1.
\end{align}
In this section we  classify all simple Gelfand-Tsetlin
admissible $\mf{sl}_3$-module of an admissible level $k$.
Using the classification of simple Gelfand-Tsetlin $\mathfrak{sl}_3$-modules \cite{FGR}, where explicit basis and the action of the generators of $\mathfrak{sl}_3$ are given, we are able to present explicitly all simple Gelfand-Tsetlin modules with necessary annihilator.

.
\subsection{Singular Gelfand-Tsetlin $\mathfrak{sl}_3$-modules}

Let $v_{rs}$, $1\leq s\leq r\leq 3$ complex numbers, $v=(v_{31},v_{32},v_{33},v_{21},v_{22},v_{11})$ and $w=(m,n,k):=(m_{21},m_{22},m_{11})\in\mathbb{Z}^{3}$. We will denote by $T(v)$ and $T(v+w)$ the following tableaux:

\begin{center}
 \hspace{1.6cm}\Stone{$v_{31}$}\Stone{$v_{32}$}\Stone{$v_{33}$}\hspace{1cm}\Stone{$v_{31}$}\Stone{$v_{32}$}\Stone{$v_{33}$}\\[0.2pt]
 $T(v)$:= \hspace{0.3cm}\Stone{$v_{21}$}\Stone{$v_{22}$} \hspace{0.4cm} $T(v+w)$:= \Stone{$v_{21}+m$}\Stone{$v_{22}+n$}\\[0.2pt]
 \hspace{1.3cm}\Stone{$v_{11}$}\hspace{3.8cm}\Stone{$v_{11}+k$}\\
\end{center}

Also, set $\mathcal{B}(T(v)):=\{T(v+w): w\in\mathbb{Z}^{3}\}$

\begin{Def} If $T(v')$ is any Gelfand-Tsetlin tableau from the set $\mathcal{B}(T(v))$. Then
\begin{itemize}
\item[(i)] $T(v')$ is  \emph{singular}, if $v'_{21}-v'_{22}\in\mathbb{Z}$.
\item[(ii)] $T(v')$ is  \emph{critical}, if $v'_{21}-v'_{22}=0$.
\end{itemize}
\end{Def}

We will follow \cite{FGR} to construct an explicit basis of tableaux for simple singular Gelfand-Tsetlin $\sl_3$-modules starting with a critical tableau.\\

From now on we will denote by $T(\bar{v})$ a fixed critical tableau and by $T(v)$ a generic tableau. Denote by $\tau$ the transposition $(1,2)$. We formally introduce  a new tableau  ${\mathcal D} T({\bar{v}} + w)$ for  every $w \in {\mathbb Z}^{3}$ subject to the relations ${\mathcal D} T({\bar{v}} + w) + {\mathcal D} T({\bar{v}} + \tau(w)) = 0$. We call ${\mathcal D} T(\bar{v}+w)$  {\it the derivative Gelfand-Tsetlin tableau} associated with $w$. 

\begin{Def}
Set  $V(T(\bar{v}))$ to be the vector space spanned by the set of tableaux $\{ T(\bar{v} + w), \,\mathcal{D} T(\bar{v} + w) \; | \; w \in {\mathbb Z}^{3}\}$, subject to the relations $T(\bar{v} + w) = T(\bar{v} +\tau(w))$ and ${\mathcal D} T(\bar{v} + w) + {\mathcal D} T(\bar{v} +\tau(w)) = 0$.  Choose a basis of $V(T(\bar{v}))$ to be the  set $\{T(\bar{v}+w),\ \mathcal{D} T(\bar{v}+w')\ :\ w,w'\in\mathbb{Z}^{3} \text{ and } w_{21}-w_{22}\in\mathbb{Z}_{\geq 0},\ w'_{21}-w'_{22}\in\mathbb{Z}_{< 0}\}$.
\end{Def}

For $v=(v_{31},v_{32},v_{33},v_{21},v_{22},v_{11})$ and a rational function $f$  on variables $v_{rs}$  which is smooth on the hyperplane $v_{21}-v_{22}=0$,  define  a linear map 
\begin{align}
\mathcal{D}^{\bar{v}} (f T(v+z)) = \mathcal{D}^{\bar{v}} (f) T(\bar{v}+z) +   f(\bar{v}) \mathcal{D} T(\bar{v}+z),
\end{align}
where $\mathcal{D}^{\bar{v}}(f) = \frac{1}{2}\left(\frac{\partial f}{\partial v_{21}}-\frac{\partial f}{\partial v_{22}}\right)(\bar{v})$.

\begin{Th}[\cite{FGR}]For any critical tableau $T(\bar{v})$, the vector space $V(T(\bar{v}))$ together with the action of $\gl_3$ given by 
\begin{align*}
E_{rs}(T(\bar{v} + z))=&\  \mathcal{D}^{\bar{v}}((v_{21} - v_{22})E_{rs}(T(v + z)))\\
E_{rs}(\mathcal{D}T(\bar{v} + z')))=&\ \mathcal{D}^{\bar{v}} ( E_{rs}(T(v + z'))),
\end{align*}
for all $z,z'\in\mathbb{Z}^{3}$ with $z'\neq\tau(z')$,
has a structure of a Gelfand-Tsetlin module over $\mathfrak{gl}_3$. Here, the action $E_{rs}(T(v + z))$ is given by the classical Gelfand-Tsetlin formulas as in the generic case.
\end{Th}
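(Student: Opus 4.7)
The plan is to verify three things about the proposed formulas: (a) they define well-defined linear operators on the quotient space $V(T(\bar{v}))$; (b) they satisfy the defining commutation relations of $\mathfrak{gl}_3$; and (c) the resulting module is Gelfand-Tsetlin, i.e., the subalgebra $\Gamma$ acts locally finitely with a generalized eigenspace decomposition.

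For well-definedness, the key input is that the classical Gelfand-Tsetlin coefficient appearing in $E_{rs}(T(v+z))$ has at worst a simple pole along the hyperplane $H=\{v_{21}=v_{22}\}$, coming from the factor $\prod_{j\ne i}(v_{2i}-v_{2j})$ in the denominator at level $k=2$. Thus $(v_{21}-v_{22})E_{rs}(T(v+z))$ is a sum of tableau-valued functions smooth at $\bar v$, and its image under $\mathcal{D}^{\bar v}$ is a well-defined element of $V(T(\bar v))$. For $E_{rs}$ acting on $\mathcal{D}T(\bar v+z')$ with $z'\ne\tau(z')$, the coefficient $E_{rs}(T(v+z'))$ is already smooth at $\bar v$, since the potentially singular denominator does not vanish there. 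One then checks compatibility with the two imposed relations: invariance $T(\bar v+z)=T(\bar v+\tau(z))$ follows because, once expanded into the basis, $(v_{21}-v_{22})E_{rs}(T(v+z))$ is $\tau$-equivariant at the point $\bar v$; the antisymmetry $\mathcal{D}T(\bar v+z')+\mathcal{D}T(\bar v+\tau(z'))=0$ is built into the skew derivative $\mathcal{D}^{\bar v}=\tfrac12(\partial_{v_{21}}-\partial_{v_{22}})$.

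For the commutation relations, I would use a deformation argument. At generic $v$, Theorem~\ref{Generic Gelfand-Tsetlin modules} guarantees that the identities $[E_{ij},E_{k\ell}](T(v+z))=(\delta_{jk}E_{i\ell}-\delta_{i\ell}E_{kj})(T(v+z))$ hold as rational-function identities in $v$. Multiplying both sides by a suitable power of $(v_{21}-v_{22})$ to clear poles, and then applying $\mathcal{D}^{\bar v}$ and tracking how its Leibniz property mixes the $T$- and $\mathcal{D}T$-parts, one should recover the singular commutation relations. Property (c) is then immediate: the Cartan generators $E_{ii}$, and more generally the elements $c_{ij}$ of $\Gamma$, act with polynomial coefficients in $v$, so on each pair $\{T(\bar v+w),\mathcal{D}T(\bar v+w)\}$ they act by a $2\times 2$ upper-triangular matrix whose diagonal is the classical Gelfand-Tsetlin eigenvalue at $\bar v$, giving a generalized $\Gamma$-eigenspace decomposition with multiplicities at most $2$.

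The main obstacle is step (b): the Leibniz-rule terms produced by $\mathcal{D}^{\bar v}$ couple the $T$- and $\mathcal{D}T$-subspaces in a nontrivial way, and one must verify that these cross terms assemble exactly into the shifted tableaux expected on the right-hand side of the $\mathfrak{gl}_3$-commutator. This should reduce to a finite, explicit computation with the $k=2$ coefficients in rank three, aided by the fact that on $\mathfrak{sl}_3$ there is only one simple reflection at the relevant level, so only one type of pole has to be resolved.
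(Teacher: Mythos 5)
The paper does not prove this theorem: it is quoted verbatim from \cite{FGR} (listed there as ``in progress''), and the published companion \cite{FGR16} is where the analogous construction for $\mathfrak{gl}_n$ is carried out. So there is no internal proof here to compare your proposal against, and I will assess it on its own.

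Your three-part strategy --- well-definedness, the $\mathfrak{gl}_3$ relations by degeneration of the generic rational-function identities, and local finiteness of $\Gamma$ --- is the right framework and matches the route actually taken in \cite{FGR16}. Parts (a) and (c) are essentially correct, modulo one imprecision in (a): the simple pole of the coefficient of $E_{rs}(T(v+z))$ lies on the hyperplane $v_{21}-v_{22}=z_{22}-z_{21}$, not always on $\{v_{21}=v_{22}\}$. At $\bar v$ only the terms attached to tableaux with $z_{21}=z_{22}$ actually blow up, and the factor $(v_{21}-v_{22})$ in the defining formula is there to clear exactly those; for $z_{21}\ne z_{22}$ that factor instead kills the $\mathcal{D}T$-contribution, leaving plain evaluation. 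Noting this also explains why the second defining line (for $\mathcal{D}T(\bar v+z')$, $z'\ne\tau(z')$) needs no clearing factor.

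The genuine gap is in (b), and you are right to flag it as the main obstacle, but the mechanism you propose does not yet close it. The singular module computes $E_{ij}\bigl(E_{k\ell}\,T(\bar v+z)\bigr)$ by \emph{two separate} applications of the $\mathcal{D}^{\bar v}$-defined operators, each introducing its own Leibniz cross-term into the $T$/$\mathcal{D}T$ splitting. Your proposal instead applies $\mathcal{D}^{\bar v}$ once, to $(v_{21}-v_{22})^m$ times the \emph{generic} composite $E_{ij}E_{k\ell}T(v+z)$. These are not obviously the same thing: one must prove that the iterated $\mathcal{D}^{\bar v}$-action is the first-order jet of the generic action at $\bar v$, and that proof exploits the precise $\tau$-symmetry/antisymmetry structure of the Gelfand--Tsetlin coefficients (e.g.\ that the coefficients in $E_{rs}T(v+z)$ expanded in the tableau basis are $\tau$-permuted among themselves so that the $T$-part is symmetric and the $\mathcal{D}T$-part is skew). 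That intertwining lemma is the technical heart of \cite{FGR16}; it does not follow from the Leibniz rule for $\mathcal{D}^{\bar v}$ alone. Saying ``this should reduce to a finite, explicit computation'' is true but the computation \emph{is} the proof, and the proposal does not carry it out.
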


\begin{Rem}\label{Rem: 1-singular modules}
This construction can be extended to $\mathfrak{gl}_n$ by considering Gelfand-Tsetlin tableaux $T(v)$ with exactly one singularity (i.e. when exists a unique $(k,i,j)$ such that $v_{ki}-v_{kj}\in\mathbb{Z}$). Such modules were studied in \cite{FGR16}. 
\end{Rem}

\subsection{Construction of simple admissible Gelfand-Tsetlin modules for $\mathfrak{sl}_3$}
In this subsection we will present explicitly simple subquotiens of $V(T(v))$ (respectively, of $V(T(\bar{v}))$) for generic $v$ (respectively, for critical $\bar{v}$) that have the same annihilator as  admissible highest weight modules $L(\lambda)$. 

In order to simplify the description we will use the following notation:
\begin{align}
Tab(w):=\begin{cases}
T(v+w), & \mbox{if } \ v\text{\ is generic},\\
T(\bar{v}+w), & \mbox{if } \bar{v}\text{\ is critical and}\ w_{ki}-w_{kj}\leq 0,\\
\mathcal{D} T(\bar{v}+w), & \mbox{if } \bar{v}\text{\ is critical and}\ w_{ki}-w_{kj}> 0.
\end{cases}
\end{align}


If $D_1,...,D_k$ are some sets of inequalities we denote
\begin{align}
L(D_1\cup...\cup D_k) = \Span \{ Tab(z)\,|\,z \mbox{ satisfies one of } D_1,...,D_k\}.
\end{align}

\subsubsection{Twisted localization}
 
Let  $U = U(\mathfrak{sl}_3)$. 
We recall the definition of the localization functor on $U$-modules.
For details we refer the reader to \cite{Deo80} and \cite{Mat00}.\\
For every $\alpha \in \Delta$ we fix an $\mathfrak{sl} (2)$-triple $(e_{\alpha}, f_{\alpha}, h_{\alpha})$, i.e. $e_{\alpha} \in {\mathfrak g}_{\alpha}$, $f_{\alpha} \in {\mathfrak g}_{-\alpha}$ and $[e_{\alpha}, f_{\alpha}]= h_{\alpha}$.  Since $\mbox{ad}\, f_\alpha$ acts locally finitely on $U$, the multiplicative set
${\bf F}_{\alpha}:=\{ f_\alpha^n \; | \; n \in {\mathbb Z}_{\geq 0} \} \subset U$
satisfies Ore's localizability conditions. Denote by
$\mathcal{D}_\alpha U$  the localization of $U$ relative to ${\bf F}_{\alpha}$.
 If $M$ is a weight module the denote by $\mathcal{D}_\alpha M$ the {\it
$\alpha$--localization of $M$}, defined as $\mathcal{D}_\alpha M =
\mathcal{D}_\alpha U \otimes_U M$. If $f_\alpha$ is injective on $M$, then $M$ can be naturally viewed as
a submodule of $\mathcal{D}_\alpha M$. 

 For $a \in \C$ and $u \in \mathcal{D}_\alpha U$   set
\begin{equation} \label{theta}
\Theta_a(u):= \sum_{i \geq 0} \binom{a}{i}\,
( \mbox{ad}\, f_\alpha)^i (u) \, f_\alpha^{-i},
\end{equation}
where $\binom{a}{i}= \frac{a(a-1)...(a-i+1)}{i!}$. 
For a $\mathcal{D}_\alpha U$-module $M$ denote by $\Phi_\alpha^a M$ 
  the $\mathcal{D}_\alpha U$-module $M$ twisted by
the action
\begin{align}
u \cdot v^a := ( \Theta_a (u)\cdot v)^a,
\end{align}
where $u \in \mathcal{D}_\alpha U$, $v \in M$, and $v^a$ stands for the
element $v$ considered as an element of $\Phi_\alpha^a M$.
We set $f_{\alpha}^a \cdot v :=v^{-a}$ in $\Phi_\alpha^{-a} M$, $a
\in \C$.

The following lemma is standard.

\begin{Lem}
Let $M$ be a $\mathcal{D}_{\alpha} U$-module, $v \in M$, $u \in \mathcal{D}_\alpha U$ and $a,b \in \C$. Then
\begin{itemize}
\item[(i)] $\Phi_\alpha^a  M \simeq M$ whenever $a \in {\mathbb Z}$.

\item[(ii)] $\Phi_\alpha^a (
\Phi_\alpha^b M) \simeq \Phi_\alpha^{a+b}M $ and, consequently, $\Phi_\alpha^a
\circ\Phi_\alpha^{-a} M \simeq \Phi_\alpha^{-a} \circ\Phi_\alpha^{a} M \simeq M$.

\item[(iii)] $f_{\alpha}^a \cdot (f_{\alpha}^b \cdot v) =
f_{\alpha}^{a+b} \cdot v$.

\item[(iv)] $f_{\alpha}^a \cdot (u \cdot (f_{\alpha}^{-a}
\cdot v)) = \Theta_a(u) \cdot v$.
\end{itemize}
\end{Lem}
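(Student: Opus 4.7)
The plan is to deduce everything from a single algebraic identity, namely that the operators $\Theta_a$ satisfy $\Theta_a\circ\Theta_b=\Theta_{a+b}$ on $\mathcal{D}_\alpha U$ for all $a,b\in\C$. First I would establish this identity by direct computation. Since $f_\alpha$ commutes with $f_\alpha^{-i}$, applying the Leibniz rule to $(\ad f_\alpha)^j$ on a product $(\ad f_\alpha)^i(u)f_\alpha^{-i}$ kills all cross terms, so
\begin{align*}
\Theta_b(\Theta_a(u))=\sum_{i,j\geqs 0}\binom{b}{j}\binom{a}{i}(\ad f_\alpha)^{i+j}(u)f_\alpha^{-i-j}=\sum_{n\geqs 0}\binom{a+b}{n}(\ad f_\alpha)^n(u)f_\alpha^{-n},
\end{align*}
by Vandermonde's identity $\sum_{j}\binom{b}{j}\binom{a}{n-j}=\binom{a+b}{n}$, and this is exactly $\Theta_{a+b}(u)$.

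For (i), I would show that when $a\in\Z_{\geqs 0}$ the series $\Theta_a(u)$ is a finite sum equal to $f_\alpha^a u f_\alpha^{-a}$; this follows from expanding $f_\alpha^a u=\sum_{i\geqs 0}\binom{a}{i}(\ad f_\alpha)^i(u)f_\alpha^{a-i}$ by induction on $a$ and then multiplying by $f_\alpha^{-a}$ on the right. Consequently the map $\Phi_\alpha^a M\to M$, $v^a\mapsto f_\alpha^{-a}v$ intertwines the two actions ($u\cdot v^a=(\Theta_a(u)v)^a=(f_\alpha^a u f_\alpha^{-a}v)^a\mapsto u f_\alpha^{-a}v$) and is a $\mathcal{D}_\alpha U$-isomorphism. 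Negative integer $a$ is handled symmetrically via $\Phi_\alpha^{-a}$.

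For (ii), I would use the identity from step one: the action of $\mathcal{D}_\alpha U$ on $\Phi_\alpha^a(\Phi_\alpha^b M)$ sends $u$ acting on $(v^b)^a$ to $(\Theta_b(\Theta_a(u))v)^b)^a=(\Theta_{a+b}(u)v)^b)^a$, so the obvious bijection $(v^b)^a\mapsto v^{a+b}$ is an isomorphism onto $\Phi_\alpha^{a+b}M$. The consequence $\Phi_\alpha^a\circ\Phi_\alpha^{-a}M\simeq M$ then combines (ii) with (i) for $a=0$.

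For (iii) and (iv) the task is purely bookkeeping, using (ii) to identify $\Phi_\alpha^{-a}\Phi_\alpha^{-b}M$ with $\Phi_\alpha^{-a-b}M$ and $\Phi_\alpha^{-a}\Phi_\alpha^{a}M$ with $M$. In (iii), $f_\alpha^b\cdot v=v^{-b}$ lies in $\Phi_\alpha^{-b}M$, so $f_\alpha^a\cdot(f_\alpha^b\cdot v)=(v^{-b})^{-a}$, which the isomorphism of (ii) identifies with $v^{-a-b}=f_\alpha^{a+b}\cdot v$. In (iv), $f_\alpha^{-a}\cdot v=v^a$, then $u\cdot v^a=(\Theta_a(u)v)^a$, and finally $f_\alpha^a\cdot(\Theta_a(u)v)^a=((\Theta_a(u)v)^a)^{-a}$, which via (ii) and (i) corresponds to $\Theta_a(u)\cdot v\in M$. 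The only substantive point is the binomial identity underpinning $\Theta_a\Theta_b=\Theta_{a+b}$; once that is in hand the rest is formal manipulation of the twisted action.
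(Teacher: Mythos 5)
The paper labels this lemma standard and gives no proof (pointing to Deodhar and Mathieu for background on twisted localization), so there is no in-paper argument to compare against. Your proof is correct: the identity $\Theta_b\circ\Theta_a=\Theta_{a+b}$, obtained from the Leibniz rule (only the terms where $\ad f_\alpha$ hits the left factor survive, since $f_\alpha$ commutes with $f_\alpha^{-i}$) together with Vandermonde's convolution, is exactly the substance of the lemma, and items (i)--(iv) do indeed unwind formally from it in the way you describe.
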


Let $M$ be any Gelfand-Tsetlin module and $\Phi_\alpha^a  M$ the twisted localization of $M$ for some $\alpha \in \Delta$ and $a \in \C$. It is clear that
 $Ann_{U}(M)=Ann_{U}(\Phi_\alpha^a  M)$.

\subsection{Minimal orbit.} 
By Proposition \ref{Pro: Cases for sl_3},
 elements of $[\overline{Pr}_k^{\mathbb{O}_{min}}]$
are represented by
$\mathfrak{sl}_{3}$-weights of the form 
\begin{align}
\lambda-\frac{ap}{q}\varpi_{1}=(\lambda_{1}-\frac{ap}{q},\lambda_{2}),
\end{align}
where 
$a,\ \lambda_{1},\lambda_{2}\in\mathbb{Z}_{\geq 0}$ are such that $\lambda_{1}+\lambda_{2}\leq p-3$ and $1\leq a\leq q-1$. 

First  we  identify simple highest weight modules whose annihilator coincides with the one of  $L(\lambda_{1}-\frac{ap}{q},\lambda_{2})$. They are the following modules:

\begin{itemize}
\item[(i)] $L(\lambda_{1}-\frac{ap}{q},\lambda_{2})$.
\item[(ii)] $L(\lambda_{2},\frac{ap}{q}-\lambda_{1}-\lambda_{2}-3)$.
\item[(iii)] $L(\frac{ap}{q}-\lambda_{1}-2,\lambda_{1}+\lambda_{2}-\frac{ap}{q}+1)$.
\end{itemize}

Note that these highest weight modules have bounded weight multiplicities of dimension at most $t=\lambda_{2}+1$. 

Next we describe simple Gelfand-Tsetlin modules with the same annihilator as  highest weight modules above.

 Fix $c:=\frac{\lambda_{2}+2\lambda_{1}-\frac{2ap}{q}}{3}$, $x:=\frac{\lambda_{2}-\lambda_{1}+\frac{ap}{q}}{3}-1$,  $t=\lambda_{2}+1$ (note that $c-x\notin\mathbb{Z}$) and consider any $y,z\in\mathbb{C}$ such that $\{c-z,x-z,c-y,x-y,z-y\}\cup\mathbb{Z}=\emptyset$.
 
\begin{Th}\label{Th: Modules associated with minimal orbit}
The following modules constitute a complete list of non-isomorphic simple admissible Gelfand-Tsetlin modules of level $k$
in the minimal orbit $\mathbb{O}_{min}$. All these modules have finite dimensional weight spaces.

\begin{itemize}
\item[(i)]  The following subquotients of the generic module $V(T(v))$ with $v=(c,x,x-t,c,x,c)$
{\scriptsize $$L_{1}:=L\left( \begin{split}
-t&<n\leq 0\\
&m\leq 0\\
&k\leq m
\end{split}\right);\ \ L_{2}=L\left( \begin{split}
&m\leq 0\\
-t&<n\leq 0\\
&m<k
\end{split};
\right);\ \ L_{3}=L\left( \begin{split}
&0<m\\
-t&<n\leq 0\\
&k\leq m
\end{split};\right);\ \ L_{4}=L\left( \begin{split}
&0<m\\
-t&<n\leq 0\\
&m<k
\end{split};
\right)$$}
These modules are highest weight modules (with respect to some Borel subalgebra) with weight multiplicities bounded by $t$.
\item[(ii)] Subquotients of the generic module $V(T(v_{1}))$ with $v_{1}=(c,x,x-t,c,x,z)$
{\scriptsize $$L_{5}=L\left( \begin{split}
-t &<n\leq 0\\
&m\leq 0
\end{split}\right);\ \ L_{6}=L\left( \begin{split}
-t &<n\leq 0\\
&0<m
\end{split}\right)$$}
 Both $L_{5}$ and $L_{6}$ are $\mathfrak{sl}_2$-induced modules  (with respect to opposite parabolic subalgebras  where $\mathfrak{sl}_2$ is generated by $E_{12}$, $E_{21}$).   Weight multiplicities of these modules are bounded by $t$. 
\item[(iii)] Generic cuspidal module with all weight multiplicities $t$ which is a subquotient of  $V(T(v_{2}))$ with $v_{2}=(c,x,x-t,y,x,z)$:
$$L_{7}=L\left( \begin{split}
-t<n\leq 0
\end{split}
\right)$$
\item[(iv)] $\mathfrak{sl}_2$-induced modules (with respect to opposite parabolic subalgebras  where $\mathfrak{sl}_2$ is generated by $E_{13}$, $E_{31}$) with weight multiplicities bounded by $t$ which are subquotients of the generic module $V(T(v_{3}))$ with $v_{3}=(c,x,x-t,z,x,z)$:
{\scriptsize $$L_{8}=L\left( \begin{split}
-t &<n\leq 0\\
&m<k
\end{split}
\right);\ \ L_{9}=L\left( \begin{split}
-t &<n\leq 0\\
&k\leq m
\end{split}\right)$$}

\item[(v)] The following subquotients of the generic module $V(T(u))$ with $u=(c,x,x-t,x,c,x)$:

{\scriptsize $$L_{10}=L\left( \begin{split}
-t &<m\leq 0\\
&n\leq 0\\
&k\leq m
\end{split}\right);\ L_{11}=L\left( \begin{split}
-t &<m\leq 0\\
&n\leq 0\\
&m<k
\end{split}
\right);\ L_{12}=L\left( \begin{split}
-t&<m\leq 0\\
&0<n\\
&k\leq m
\end{split}\right);\ L_{13}=L\left( \begin{split}
-t &<m\leq 0\\
&0<n\\
&m<k
\end{split}
\right)$$}
These are highest weight modules (with respect to some Borel subalgebra) with weight multiplicities bounded by $t$.
\item[(vi)] $\mathfrak{sl}_2$-induced modules (with respect to opposite parabolic subalgebras  where $\mathfrak{sl}_2$ is generated by $E_{23}$, $E_{32}$)  with weight multiplicities bounded by $t$ which are subquotients of the generic module $V(T(u_1))$ with $u_{1}=(c,x,x-t,x,y,x)$
{\scriptsize $$L_{14}=L\left( \begin{split}
-t &<m\leq 0\\
&k\leq m
\end{split}
\right);\ \ L_{15}=L\left( \begin{split}
-t &<m\leq 0\\
&m<k
\end{split}\right)$$}

\item[(vii)] Subquotients of the singular module $V(T(\bar{v}))$ with$\bar{v}=(c,x,x-t,x,x,x)$:
{\tiny $$L_{16}=L\left( \begin{split}
&-t <n\leq 0\\
&m\leq -t\\
&m< k\leq n
\end{split}\right);\ \ L_{18}:=L\left(\begin{cases}
\ \ \ m\leq n\\
-t<m\leq 0\\
\ \ \ 0<n\\
\ \ \ k\leq m
\end{cases}\bigcup
\begin{cases}
\ \ \ m\leq n\\
-t<m\leq 0\\
\ \ \ n\leq 0\\
\ \ \ k\leq n
\end{cases}\bigcup
\begin{cases}
\ \ \ n<m\\
-t<m\leq 0\\
\ \ \ k\leq n
\end{cases}\right)$$}
{\tiny $$ L_{17}:=L\left(\begin{split}
-t<m\leq 0\\
0<n\ \ \ \\
m<k\leq n
\end{split}\right);\ \ L_{19}:=L\left(\begin{cases}
\ \ \ m\leq n\\
-t<m\leq 0\\
\ \ \ n<k
\end{cases}\bigcup
 \begin{cases}
\ \ \ n<m\\
-t<m\leq 0\\
\ \ \ n\leq -t\\
\ \ \ m<k
\end{cases}\bigcup
 \begin{cases}
\ \ \ n<m\\
-t<m\leq 0\\
-t< n\leq 0\\
\ \ \ n<k
\end{cases}\right)$$}
These are highest weight modules (with respect to some Borel subalgebra) with weight multiplicities bounded by $t$. Modules $L_{16}$ and $L_{17}$ have all $1$-dimensional Gelfand-Tsetlin subspaces.
\item[(viii)] Cuspidal module with weight multiplicities $t$, which is   a subquotient of the singular module $V(T(\bar{v}_1))$ with $\bar{v}_{1}=(c,x,x-t,x,x,z)$, 
{\scriptsize $$L_{20}=L\left( \begin{cases}
\ \ \ m\leq n\\
-t<m\leq 0
\end{cases}\bigcup
\begin{cases}
\ \ \ n<m\\
-t<n\leq 0
\end{cases}\right)$$}
\end{itemize}
\end{Th}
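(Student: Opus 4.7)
The plan is to combine the complete classification of simple Gelfand-Tsetlin $\mathfrak{sl}_3$-modules obtained in \cite{FGR} with the description of $[\overline{Pr}_k^{\mathbb{O}_{min}}]$ given in Proposition \ref{Pro: Cases for sl_3}. By Theorem \ref{Th:classification-of-admissible-modules} together with Proposition \ref{Pro:equiv-class-adm}, a simple $\mathfrak{sl}_3$-module $M$ is admissible of level $k$ and in $\mathbb{O}_{min}$ precisely when $\on{Ann}_{U(\mathfrak{sl}_3)}M = J_\mu$ for some $\mu$ in the Weyl-dot-orbit of $\lambda = (\lambda_1-\tfrac{ap}{q},\lambda_2)$. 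That orbit contributes three distinct simple highest weight modules (the integral Weyl group at such a $\lambda$ is of type $A_1$, so $|W|/|W(\lambda)|=3$), namely the three listed just before the theorem. Our task therefore reduces to filtering the \cite{FGR} list by this annihilator condition.

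The first step is to enumerate the $\mathfrak{gl}_3$-tableaux $T(v)$ whose central character matches the lift of $\lambda$ to $\mathfrak{gl}_3$. The character is fixed by the multiset $\{v_{31},v_{32},v_{33}\}$ and by $v_{21}+v_{22}$. A concrete set of representatives is obtained by setting the top row to a permutation of $(c,x,x-t)$, with $c,x,t$ computed directly from $\lambda_1,\lambda_2,a,p,q$ as specified in the theorem, and then distinguishing how the second row $(v_{21},v_{22})$ and the entry $v_{11}$ intersect this multiset modulo $\mathbb{Z}$. This yields the six generic tableau shapes $v,v_1,v_2,v_3,u,u_1$ and the two critical shapes $\bar v, \bar v_1$ appearing in items (i)--(viii); any other tableau with the same central character is conjugate to one of these under the $S_3\times S_2$-symmetry of tableau entries that preserves $V(T(v))$ up to isomorphism, and hence yields an isomorphic module.

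The second step is to enumerate the simple subquotients for each shape. For the generic tableaux this is Theorem \ref{characterization of irreducible basis}: each simple subquotient is labeled by a subset $A\subset\Omega(T(v))$ cutting out a region of $\mathbb{Z}^3$ via $\Omega^+$-inequalities. Running through all admissible $A$ produces $L_1,\dots,L_{15}$. For the critical tableaux one invokes the corresponding singular classification from \cite{FGR}, where the derivative tableaux $\mathcal{D}T(\bar v+w)$ enter the basis of a subquotient precisely when $w_{21}-w_{22}>0$; this refinement produces the union-of-regions description visible in $L_{18}, L_{19}, L_{20}$, and together with the simpler singular cases yields $L_{16},\dots,L_{20}$. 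The final distinction between the various $\mathfrak{sl}_2$-induced, cuspidal, and highest weight families in the statement is then read off from which of the $v_{ki}-v_{k-1,j}$ are integral.

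The third step is to certify the annihilator. By construction the central character is correct, so Proposition \ref{Pro:equiv-class-adm} places the annihilator in the finite set $\{J_\mu:\mu\sim\lambda\}$. Excluding degeneration into the zero orbit amounts to ruling out finite-dimensionality, which is clear since each weight support is infinite. For the $\mathfrak{sl}_2$-induced families in (ii), (iv), (vi) one can additionally invoke Theorem \ref{Th:restriction} applied to the parabolic with Levi of type $A_1$ containing the relevant $\mathfrak{sl}_2$: admissibility then reduces to the well-understood $\mathfrak{sl}_2$-case. The main obstacle is completeness: one must verify that no simple Gelfand-Tsetlin module with the required annihilator lies outside the enumerated tableau shapes. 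This is a combinatorial finite check, because any simple Gelfand-Tsetlin $\mathfrak{sl}_3$-module is by \cite{FGR} a subquotient of some $V(T(v))$ or $V(T(\bar v))$, and the central-character constraint pins down the allowed top rows to those exhibited in step one.
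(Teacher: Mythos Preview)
Your overall strategy—filter the \cite{FGR} classification by the central characters coming from Proposition~\ref{Pro: Cases for sl_3}—is reasonable, but it is \emph{not} the route the paper takes, and your version has a genuine gap in the annihilator step.

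The paper's proof does not attempt to certify annihilators by a central-character-plus-exclusion argument. Instead it starts from each of the three highest weight modules $L(\lambda_1-\tfrac{ap}{q},\lambda_2)$, $L(\lambda_2,\tfrac{ap}{q}-\lambda_1-\lambda_2-3)$, $L(\tfrac{ap}{q}-\lambda_1-2,\lambda_1+\lambda_2-\tfrac{ap}{q}+1)$, realizes each one explicitly inside some $V(T(\cdot))$, and then manufactures all the remaining $L_i$ by applying localization and \emph{twisted} localization functors $\Phi_\alpha^a\mathcal{D}_\alpha$ with respect to $E_{21}$, $E_{32}$, $E_{31}$. The point is that these functors preserve $\on{Ann}_U$; hence every module so produced automatically has annihilator $J_\lambda$, and the issue of ``which orbit'' never arises. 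Completeness then comes from \cite{FGR}.

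In your argument, step three asserts that the correct central character places the annihilator among the $J_\mu$ with $\mu\sim\lambda$, and then you only exclude the zero orbit by infinite-dimensionality. This is not enough: for $\lambda\in\overline{Pr}_k^{\mathbb{O}_{min}}$ the integral root system $\Delta(\lambda)$ is of type $A_1$, so there are \emph{two} primitive ideals with central character $\chi_\lambda$, namely $J_\lambda$ (minimal orbit) and $U(\g)\ker\chi_\lambda$ (principal orbit). Your step two, ``running through all admissible $A$ produces $L_1,\dots,L_{15}$,'' is therefore false as stated: the generic modules $V(T(v)),V(T(u)),\dots$ also have simple subquotients lying in the principal orbit (e.g.\ those with $n>0$ or $n\le -t$ in case (i)), and nothing in your write-up separates them out. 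The easy fix is to observe that every module in the list has weight multiplicities bounded by $t$, hence Gelfand--Kirillov dimension at most $2$, forcing $\dim\on{Var}(\on{Ann}M)\le 4<6=\dim\mathbb{O}_{prin}$; but you need to say this. Also note that your appeal to Theorem~\ref{Th:restriction} for the $\mathfrak{sl}_2$-induced families is in the wrong direction: that theorem gives a necessary condition for admissibility, not a sufficient one.
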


\begin{proof}
Associated with the $\mathfrak{sl}_{3}$-weight $(\lambda_{1}-\frac{2ap}{q},\lambda_{2})$ we have the $\mathfrak{gl}_3$-weight  $$\left(\frac{\lambda_{2}+2\lambda_{1}-\frac{2ap}{q}}{3},\frac{\lambda_{2}-\lambda_{1}+\frac{ap}{q}}{3},\frac{-2\lambda_{2}-\lambda_{1}+\frac{ap}{q}}{3}\right).$$ 
The module $L(\lambda_{1}-\frac{ap}{q},\lambda_{2})$ can be realized as a subquotient of the generic Gelfand-Tsetlin module $V(T(v))$ where     
\begin{center}
 \hspace{1.5cm}\Stone{$c$}\Stone{$x$}\Stone{$x-t$}\\[0.2pt]
 $T(v)$=\hspace{0.3cm} \Stone{$c$}\Stone{$x$}\\[0.2pt]
 \hspace{1.3cm}\Stone{$c$}\\
\end{center}
A basis for this module is given by the set of tableaux $L_{1}$.  Using localization functor we identify another three simple subquotients of $V(T(v))$ with the same annihilator: $L_2$, $L_3$ and $L_4$. Now applying twisted localization functor with respect to $E_{21}$, we obtain two simple subquotients of the generic module $V(T(v_{1}))$ where  
\begin{center}
 \hspace{1.5cm}\Stone{$c$}\Stone{$x$}\Stone{$x-t$}\\[0.2pt]
 $T(v_{1})$=\hspace{0.2cm} \Stone{$c$}\Stone{$x$}\\[0.2pt]
 \hspace{1.3cm}\Stone{$z$}\\
\end{center}
The corresponding basis are $L_5$ and $L_6$. 
Using twisted localization functor with respect to $E_{32}$ we obtain a cuspidal module with all $t$-dimensional weight multiplicities. This module can be realized as a subquotient of the generic Gelfand-Tsetlin module $V(T(v_{2}))$, where
\begin{center}

 \hspace{1.5cm}\Stone{$c$}\Stone{$x$}\Stone{$x-t$}\\[0.2pt]
 $T(v_{2})$=\hspace{0.2cm} \Stone{$y$}\Stone{$x$}\\[0.2pt]
 \hspace{1.3cm}\Stone{$z$}\\
\end{center}

\noindent A basis is given by $L_7$. Now, applying twisted localization functor with respect to $E_{31}$ we obtain two simple subquotients of the generic module $V(T(v_{3}))$, where 
\begin{center}
 \hspace{1.5cm}\Stone{$c$}\Stone{$x$}\Stone{$x-t$}\\[0.2pt]
 $T(v_{3})$=\hspace{0.2cm} \Stone{$z$}\Stone{$x$}\\[0.2pt]
 \hspace{1.3cm}\Stone{$z$}\\
\end{center}

Their bases are $L_8$ and $L_9$. \\
Associated with the $\mathfrak{sl}_{3}$-weight $(\lambda_{2},\frac{ap}{q}-\lambda_{1}-\lambda_{2}-3)$ we have the $\mathfrak{gl}(3)$-weight  $$\left(\frac{\lambda_{2}-\lambda_{1}+\frac{ap}{q}}{3},\frac{\lambda_{2}+2\lambda_{1}-\frac{2ap}{q}}{3},\frac{-2\lambda_{2}-\lambda_{1}+\frac{ap}{q}}{3}\right).$$

\noindent So, the highest weight module  $L(\lambda_{2},\frac{ap}{q}-\lambda_{1}-\lambda_{2}-3)$ has a tableaux realization as a subquotient of the generic Gelfand-Tsetlin module $V(T(u))$ where     
\begin{center}
 \hspace{1.5cm}\Stone{$x$}\Stone{$c$}\Stone{$x-t$}\\[0.2pt]
 $T(u)$=\hspace{0.3cm} \Stone{$x$}\Stone{$c$}\\[0.2pt]
 \hspace{1.3cm}\Stone{$x$}\\
\end{center}
A basis for this module is given by the set of tableaux $L_{10}$.
 Also, the module $V(T(u))$ contains three other simple subquotiens with the desired annihilator. Their bases are $L_{11}$, $L_{12}$, $L_{13}$.

 Applying twisted localization functor with respect to $E_{32}$ we  obtain modules that can be realized as a subquotient of the generic Gelfand-Tsetlin module $V(T(u_{1}))$, where 

\begin{center}

 \hspace{1.5cm}\Stone{$x$}\Stone{$c$}\Stone{$x-t$}\\[0.2pt]
 $T(u_{1})$=\hspace{0.3cm} \Stone{$x$}\Stone{$y$}\\[0.2pt]
 \hspace{1.3cm}\Stone{$x$}\\
\end{center}
These two modules have explicit basis $L_{14}$ and $L_{15}$ respectively.

Finally, 
associated with the $\mathfrak{sl}_{3}$-weight $(\frac{ap}{q}-\lambda_{1}-2,\lambda_{1}+\lambda_{2}-\frac{ap}{q}+1)$ is the $\mathfrak{gl}(3)$-weight  $$\left(\frac{\lambda_{2}-\lambda_{1}+\frac{ap}{q}}{3},\frac{-2\lambda_{2}-\lambda_{1}+\frac{ap}{q}}{3},\frac{\lambda_{2}+2\lambda_{1}-\frac{2ap}{q}}{3}\right).$$ So, the module $L\left(\frac{ap}{q}-\lambda_{1}-2,\lambda_{1}+\lambda_{2}-\frac{ap}{q}+1\right)$ can be realized as a subquotient $L_{16}$ of the singular Gelfand-Tsetlin module $V(T(\bar{v}))$ where     
\begin{center}
 \hspace{1.5cm}\Stone{$x$}\Stone{$x-t$}\Stone{$c$}\\[0.2pt]
 $T(\bar{v})$=\hspace{0.3cm} \Stone{$x$}\Stone{$x$}\\[0.2pt]
 \hspace{1.3cm}\Stone{$x$}\\
\end{center}
The module $V(T(\bar{v}))$ has another three simple subquotients with the same annihilator. Their bases are $L_{17}$, $L_{18}$ and $L_{19}$ respectively.

Applying the twisted localization functor with respect to $E_{21}$ we obtain a cuspidal module with $t$-dimensional weight multiplicities that can be realized as a subquotient of the singular Gelfand-Tsetlin module $V(T(\bar{v}_{1}))$, where\\

\begin{center}

 \hspace{1.5cm}\Stone{$x$}\Stone{$x-t$}\Stone{$c$}\\[0.2pt]
 $T(\bar{v}_{1})$=\hspace{0.3cm}   \Stone{$x$}\Stone{$x$}\\[0.2pt]
 \hspace{1.3cm}\Stone{$z$}\\
\end{center}
A basis for this module is given by $L_{20}$.

\end{proof}

\begin{Rem}
A complete list of non-isomorphic simple admissible Gelfand-Tsetlin modules in the minimal orbit is obtained when $q=2$ and $a=1$ in the theorem above (see Proposition \ref{Pro: Cases for sl_3}).  All other modules correspond to minimal nilpotent orbits in the closure of the principal orbit with $q\geq 3$.

\end{Rem}

\subsection{Principal orbit ($q\geq 3$).} 
By Proposition \ref{Pro: Cases for sl_3}
the elements of $[\overline{Pr}_k^{\mathbb{O}_{prin}}]$ are represented by
 $\mathfrak{sl}_{3}$-weights of the form 
 \begin{align}
\lambda'=\lambda-\frac{p}{q}(\mu+\rho)=(\lambda_{1}-\frac{p}{q}(\mu_{1}+1),\lambda_{2}-\frac{p}{q}(\mu_{2}+1)),
\end{align}
where $\lambda_{1},\lambda_{2},\mu_{1},\mu_{2}\in\mathbb{Z}_{\geq 0}$ are such that $\lambda_{1}+\lambda_{2}\leq p-3$, $\mu_{1}+\mu_{2}\leq q-3$. 
Note that this set is non-empty if and only if $q\geq 3$.
 A $\mathfrak{gl}(3)$ weight associated with $\lambda'$ is $(a_{1},a_{2},a_{3})$, where $a_{1}=\frac{1}{3}\left(2\lambda_{1}+\lambda_{2}-\frac{p}{q}(2\mu_{1}+\mu_{2}+3)\right)$, $a_{2}=\frac{1}{3}\left(\lambda_{2}-\lambda_{1}-\frac{p}{q}(\mu_{2}-\mu_{1})\right)$ and $a_{3}=-\frac{1}{3}\left(\lambda_{1}+2\lambda_{2}-\frac{p}{q}(\mu_{1}+2\mu_{2}+3)\right)$. First we identify simple highest weight modules in the principal orbit, i.e. having the same annihilator as $L(\lambda')$. 
They are all of the form $L(w\circ \lambda')$ with $w$ an element of the Weyl group.
All those highest weight modules are simple Verma modules associated with some $\mathfrak{gl}_3$-weight of the form $(a,b,c)$ satisfying $\{a-b,a-c,b-c\}\cap\mathbb{Z}=\emptyset$. Hence, it is sufficient to describe simple Gelfand-Tsetlin modules in the same orbit with simple generic Verma module which is a  subquotient of  $V(T(v))$, where $v=(x,y,z,x,y,x)$ and $\{x-y,x-z,y-z\}\cap\mathbb{Z}=\emptyset$.

Note that in this orbit we will have both generic and singular Gelfand-Tsetlin modules. 
\begin{Th}\label{thm-q>3}
The following modules constitute a complete list of non-isomorphic infinite-dimensional
simple admissible Gelfand-Tsetlin $\mathfrak{sl}_{3}$-modules 
of level $k$
with denominator $q\geq 3$. 
\begin{itemize}
\item[(i)] Simple admissible modules described in Theorem \ref{Th: Modules associated with minimal orbit} when $q\geq 3$.
\item[(ii)]
Simple generic admissible Gelfand-Tsetlin modules. Any such module  is isomorphic to a simple subquotient of $V(T(v'))$ where $T(v')$ is a strongly generic tableau with top row $(x,y,z):=(a_{1},a_{2}-1,a_{3}-2)$ (see Theorem \ref{characterization of irreducible basis} and Corollary \ref{cor-generic}).
\item[(iii)] Simple singular admissible Gelfand-Tsetlin modules which are isomorphic to one of the following modules:
\begin{itemize}
\item[$\bullet$] Subquotients of $V(T(\bar{v}))$, where $\bar{v}=(x,y,z,x,x,x)$:

$$L_{1}=L\left(\begin{cases}
m\leq 0\\
0<n\\
k\leq m
\end{cases}\bigcup
\begin{cases}
m\leq 0\\
n\leq 0\\
k\leq n
\end{cases}\right); L_{2}=L\left(\begin{cases}
0<m\\
0<n\\
n<k
\end{cases}\bigcup
 \begin{cases}
0<m\\
n\leq 0\\
m<k
\end{cases}\right)$$

$$L_{3}=L\left(\begin{cases}
m\leq 0\\
n<k
\end{cases}\right);\ \ L_{4}=L\left(\begin{cases}
0<m\\
k\leq n
\end{cases}\right);\ \ L_{5}=L\left(\begin{split}
m\leq 0<n\\
m<k\leq n
\end{split}\right)$$

Here, $L_{1},\ L_{2},\ L_{3},\ L_{4}$ are $\mathfrak{sl}_2$-induced modules with unbounded weight multiplicities and $L_{5}$ is cuspidal with infinite weight multiplicities. 
 Modules $L_{1}$ and $L_{2}$ are  induced from $\mathfrak{sl}_2$ (with respect to opposite parabolic subalgebras  where $\mathfrak{sl}_2$ is generated by $E_{23}$, $E_{32}$), while $L_{3}$ and $L_{4}$ 
are  induced from $\mathfrak{sl}_2$ (with respect to opposite parabolic subalgebras  where $\mathfrak{sl}_2$ is generated by $E_{13}$, $E_{31}$).
\item[$\bullet$] Cuspidal modules with infinite weight multiplicities which are subquotients of $V(T(\bar{v}_{1}))$ with $\bar{v}_{1}=(x,y,z,x,x,a)$:
{\scriptsize $$L_{6}=L\left(\begin{cases}
m\leq n\\
m\leq 0
\end{cases}\bigcup
\begin{cases}
n<m\\
m\leq 0
\end{cases}\right);\ \ L_{7}=L\left(\begin{cases}
m\leq n\\
0<m
\end{cases} \bigcup
\begin{cases}
n<m\\
0<m
\end{cases}\right)$$}
\item[$\bullet$] Cuspidal modules with infinite  weight multiplicities which are subquotients of $V(T(\bar{v}_{2}))$ with $\bar{v}_{2}=(x,y,z,a,a,a)$:
{\scriptsize $$L_{8}=L\left(\begin{cases}
m\leq n\\
k\leq n
\end{cases}\bigcup \begin{cases}
n<m\\
k\leq n
\end{cases}\right);\ \ L_{9}=L\left(\begin{cases}
m\leq n\\
n<k
\end{cases}\bigcup
\begin{cases}
n<m\\
n<k
\end{cases}\right)$$}
\item[$\bullet$] Cuspidal module with infinite  weight multiplicities $V(T(\bar{v}_{3}))$ with $\bar{v}_{3}=(x,y,z,a,a,c)$.
\end{itemize}
\end{itemize}
\end{Th}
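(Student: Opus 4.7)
The plan is to classify admissible Gelfand--Tsetlin $\mf{sl}_3$-modules of level $k$ with denominator $q\ge 3$ by splitting them according to the nilpotent orbit of the annihilator, as dictated by Proposition \ref{Pro: Cases for sl_3}. Since $\mathbb{O}_q=\mathbb{O}_{prin}$ for $q\ge 3$, we have
\begin{align*}
[\overline{Pr}_k]=[\overline{Pr}_k^{\{0\}}]\sqcup[\overline{Pr}_k^{\mathbb{O}_{min}}]\sqcup[\overline{Pr}_k^{\mathbb{O}_{prin}}].
\end{align*}
The zero-orbit part consists of finite dimensional modules (excluded from the list of infinite-dimensional modules), and the minimal-orbit part is handled by Theorem \ref{Th: Modules associated with minimal orbit} (case (i) of the statement). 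Thus the remaining work is to enumerate all simple Gelfand--Tsetlin modules whose annihilator equals $J_\lam$ for some $\lam\in[\overline{Pr}_k^{\mathbb{O}_{prin}}]$.

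The principal-orbit primitive ideal $J_{\lam'}$ equals $U(\fing)\ker\chi_{\lam'}$ by \S\ref{subsection:Admissible representations and nilpotent orbits}, and by Proposition \ref{Pro:equiv-class-adm} every other highest weight module with the same annihilator is a Verma module $M(w\circ\lam')$. Computing the $\gl_3$-weight $(a_1,a_2,a_3)$ associated to $\lam'=\lambda-\frac{p}{q}(\mu+\rho)$ shows that the differences $a_i-a_j$ are non-integral, so the representative Verma module can be realized as a subquotient of the strongly generic tableau module $V(T(v'))$ with top row $(a_1,a_2-1,a_3-2)$. At this point I would appeal to Theorem \ref{Strongly generic submodule has trivial annihilator}, which guarantees that \emph{every} simple subquotient of such a strongly generic $V(T(v'))$ has annihilator equal to $U(\fing)\ker\chi_{\lam'}$; Theorem \ref{characterization of irreducible basis} together with Corollary \ref{cor-generic} then supplies the complete list of such generic subquotients, settling case (ii).

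For case (iii) I would invoke the full classification of simple Gelfand--Tsetlin $\mf{sl}_3$-modules obtained in \cite{FGR}: every simple singular Gelfand--Tsetlin module arises as a subquotient of one of the singular modules $V(T(\bar v))$ for a critical tableau $\bar v$, possibly after applying twisted localization along a simple root. For the principal orbit one needs $\bar v$ whose top row is the strongly generic triple $(x,y,z)$ from above. The derivative-tableau construction and the explicit bases recalled in \S 5.1 then produce the subquotients $L_1,\ldots,L_5$ of $V(T(\bar v))$. Applying the twisted localization functors $\Phi_{\alpha}^a$ (for $\alpha\in\{\alpha_1,\alpha_2,\alpha_1+\alpha_2\}$) preserves annihilators and yields the cuspidal modules $L_6,L_7$ from $V(T(\bar v_1))$, the cuspidal modules $L_8,L_9$ from $V(T(\bar v_2))$, and finally the fully cuspidal module realized on $V(T(\bar v_3))$; cf.\ the parallel discussion in the proof of Theorem \ref{Th: Modules associated with minimal orbit}.

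The last ingredient is to verify that the singular subquotients in (iii) have annihilator in the \emph{principal} orbit rather than a strictly smaller one, and that the list is exhaustive and non-redundant. For the annihilator identification I would adapt the argument of Theorem \ref{Strongly generic submodule has trivial annihilator} to the $1$-singular setting of Remark \ref{Rem: 1-singular modules}, using Khomenko's Weyl algebra embedding $\phi$: any $u\in U(\fing)$ killing a large enough subtableau in $\mathcal{B}(T(\bar v))$ must lie in $I=U(\fing)\on{Ann}_{Z(\fing)}N$, because the derivative tableaux detect precisely the extra relations coming from $\bar v_{21}=\bar v_{22}$. Exhaustiveness follows from the classification in \cite{FGR}, together with the observation that twisted localization exhausts the $W$-orbit of central characters. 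I expect this annihilator verification in the singular case --- ensuring that none of the $L_i$ degenerates into a module with larger associated variety or falls into a smaller orbit --- to be the main technical obstacle, since it is the point where the delicate interplay between Gelfand--Tsetlin singularities and Joseph's associated variety comes to the fore.
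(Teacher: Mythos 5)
Your proposal follows the same overall architecture as the paper's proof: decompose $[\overline{Pr}_k]$ by nilpotent orbit (Proposition \ref{Pro: Cases for sl_3}), dispose of the minimal-orbit modules by Theorem \ref{Th: Modules associated with minimal orbit}, handle the generic principal-orbit modules by Corollary \ref{cor-generic}, and produce the singular modules by applying twisted localization to a simple generic Verma module on the principal orbit, with exhaustiveness coming from the \cite{FGR} classification. Items (i) and (ii) coincide with the paper's treatment.

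Where you diverge is in part (iii). You propose to \emph{re-prove} the annihilator identification for the $1$-singular subquotients by adapting the Khomenko-embedding argument of Theorem \ref{Strongly generic submodule has trivial annihilator} to derivative tableaux, and you flag this as the main technical obstacle. The paper does not go that route: it relies on two facts it has already established --- that twisted localization $\Phi_\alpha^a$ preserves the $U(\fing)$-annihilator (stated at the end of \S 5.2.1), and that \cite{FGR} gives a complete list of simple subquotients of each $V(T(\bar v_i))$ together with their annihilators --- so the principal-orbit membership of $L_1,\ldots,L_{10}$ is read off rather than re-derived. Your alternative is plausible and would make the argument more self-contained (independent of the internal annihilator computations in \cite{FGR}), but it is genuinely more work: the rational functions $f_k$ in Khomenko's decomposition $\phi(u)=\sum_k D_k f_k$ develop poles along $v_{21}=v_{22}$, and the action on derivative tableaux involves $\mathcal D^{\bar v}$ rather than plain evaluation, so the ``choose $B$ far from the singular locus and evaluate'' step of Theorem \ref{Strongly generic submodule has trivial annihilator} does not transfer verbatim. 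One small correction: your remark that ``twisted localization exhausts the $W$-orbit of central characters'' is misleading --- $\Phi_\alpha^a$ fixes the central character, and what it actually does is move you between modules realized on different tableau families with the \emph{same} infinitesimal character; the $W$-orbit of highest weights is already accounted for by taking all $L(w\circ\lambda')$, which are the generic Verma modules you start from.
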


\begin{proof} The proof is based on the classification and explicit presentation of all simple Gelfand-Tsetlin $\mathfrak{sl}_3$-modules (see \cite{FGR} \S 7 and \S 8).
\begin{itemize}
\item[(i)] Follows by Proposition \ref{Pro: Cases for sl_3}.
\item[(ii)]  These modules are described in Corollary \ref{cor-generic}. 
\item[(iii)] Set $a,c\in\mathbb{C}$ such that $\{x-a,y-a,z-a,c-a\}\cap\mathbb{Z}=\emptyset$. Applying the twisted localization functor with respect to $E_{32}$ with parameter $a$ to the Verma module generated by the tableau 

\begin{center}

 \hspace{1.5cm}\Stone{$x$}\Stone{$y$}\Stone{$z$}\\[0.2pt]
 \hspace{1.2cm}   \Stone{$x$}\Stone{$y$}\\[0.2pt]
 \hspace{1.3cm}\Stone{$x$}\\
\end{center}

we obtain the singular module $V(T(\bar{v}_{0}))$ 
where  

\begin{center}

 \hspace{1.5cm}\Stone{$x$}\Stone{$y$}\Stone{$z$}\\[0.2pt]
 $T(\bar{v}_{0})$=\hspace{0.3cm}   \Stone{$x$}\Stone{$x$}\\[0.2pt]
 \hspace{1.3cm}\Stone{$x$}\\
\end{center}
The  module $V(T(\bar{v}_{0}))$ has five simple subquotiens with the same annihilator. They correspond to the modules $L_{1},L_{2},L_{3}$ and $L_{4}$ (with unbounded but finite dimensional weight multiplicities) and a module with infinite dimensional weight spaces $L_{5}$. Applying twisted localization functor with respect to $E_{21}$ to the previous case, we obtain modules on the singular block $V(T(\bar{v}_{1}))$ where:

\begin{center}

 \hspace{1.5cm}\Stone{$x$}\Stone{$y$}\Stone{$z$}\\[0.2pt]
 $T(\bar{v}_{1})$=\hspace{0.2cm}   \Stone{$x$}\Stone{$x$}\\[0.2pt]
 \hspace{1.3cm}\Stone{$a$}\\
\end{center}
The module $V(T(\bar{v}_{1}))$ has two simple subquotiens with the same annihilator, both of them having infinite weight multiplicities. They correspond to the modules $L_{6}$ and $L_{7}$. Consider\\

\begin{center}

 \hspace{1.5cm}\Stone{$x$}\Stone{$y$}\Stone{$z$}\\[0.2pt]
 $T(\bar{v}_{2})$=\hspace{0.2cm}   \Stone{$a$}\Stone{$a$}\\[0.2pt]
 \hspace{1.3cm}\Stone{$a$}\\
\end{center}

\noindent The singular module $V(T(\bar{v}_{2}))$ has two simple subquotiens with infinite dimensional weight spaces corresponding to  $L_{8}$ and $L_{9}$. Finally, consider\\

\begin{center}

 \hspace{1.5cm}\Stone{$x$}\Stone{$y$}\Stone{$z$}\\[0.2pt]
 $T(\bar{v}_{3})$=\hspace{0.2cm}   \Stone{$a$}\Stone{$a$}\\[0.2pt]
 \hspace{1.3cm}\Stone{$c$}\\
\end{center}

which is obtained by application on the twisted localization functor with respect to $E_{21}$ to the previous case
\noindent The singular module $V(T(\bar{v}_{3}))$ is simple with infinite dimensional weight multiplicities. It  corresponds to $L_{10}$.
\end{itemize}

\end{proof}

\begin{Rem}
Modules $L_1$, $\ldots$, $L_4$ in Theorem \ref{thm-q>3} have unbounded finite weight multiplicities which grow linearly. They are isomorphic to the corresponding generalized Verma modules (induced from unbounded simple $sl(2)$-modules).
\end{Rem}

\section{Admissible $\mathfrak{sl}_n$-modules induced from $\mathfrak{sl}_k$-modules}
Let 
$k$ be an admissible number for $\widehat{\mf{sl}}_n$,
so that 
$$k+n=p/q,\quad  p\geq n,\ q\geq 1,\ (p,q)=1.$$
In this section we describe families of simple admissible $\fing=\mathfrak{sl}_n$-modules induced from simple $\mathfrak{sl}_k$-modules for $k=2,3$. 
If  $M=L_{\mathfrak p}(\lambda, N)$ is admissible for some parabolic subalgebra $\mf{p}$ with Levi subalgebra $\mf{l}$
and nilradical $\mf{m}$,
then   
$M^{\mathfrak{m}}\simeq N$ and, hence, $N$ is  admissible $\mathfrak l$-module
by Theorem \ref{Th:restriction}. Therefore, we need to consider $\mathfrak{sl}_n$-modules $M$ induced from simple admissible $\mathfrak{sl}_k$-modules. 

We will provide a tableaux realization for admissible $\mathfrak{sl}_n$-modules induced from simple Gelfand-Tsetlin  $\mathfrak{sl}_k$-modules in the principal orbit for $k=2,3$.  

\begin{Rem}
We defined Gelfand-Tsetlin $\mathfrak{gl}_n$-modules in connection with the fixed Gelfand-Tsetlin subalgebra (which is maximal commutative) determined by a special chain of embeddings of $\mathfrak{gl}_i$'s into each other (left upper corner inclusion). On the other hand, one can consider a different chain of embeddings starting from an arbitrary root of $\mathfrak{gl}_n$. This would produce a different maximal commutative subalgebra of $U(\mathfrak{gl}_n)$ and one could consider  Gelfand-Tsetlin  modules with respect to this subalgebra. But, it is clear, that all such chains of embeddings are conjugated by the Weyl group and, hence, the categories of corresponding Gelfand-Tsetlin modules are equivalent.     Same holds for Gelfand-Tsetlin modules over  
$\mathfrak{sl}_n$.
If  $\mathfrak{sl}_k$  is a root subalgebra of $\mathfrak{sl}_n$ then there exists a parabolic subalgebra $\mathfrak p=
\mathfrak l\oplus \mathfrak n$ with $\mathfrak l=\mathfrak{sl}_k+\h$. Then for a Gelfand-Tsetlin $\mathfrak{sl}_k$-module $N$ with respect to certain choice of embeddings of $\mathfrak{sl}_k$, the induced module $M_{\mathfrak p}(\lambda, N)$  will be 
 a 
 Gelfand-Tsetlin module for a certain chain of embeddings of $\mathfrak{sl}_n$ which contains  $\mathfrak{sl}_k$.   This module in general may not be  a 
 Gelfand-Tsetlin module with respect to the Gelfand-Tsetlin subalgebra (left upper corner inclusion).  But, due to the category equivalence mentioned above, we can always assume
  without loss of generality that $\mathfrak{sl}_k$ subalgebra belongs to the chain of the left upper corner inclusions and that $M_{\mathfrak p}(\lambda, N)$  is a Gelfand-Tsetlin module with respect to the Gelfand-Tsetlin subalgebra.

\end{Rem}

\subsection{Construction of induced modules}

Let $N$ be any simple $\mathfrak{sl}_k$-module ($k=2,3$) with tableaux realization as a subquotient of a module $V(T(u))$ with generic top row (i.e. $u_{21}-u_{22}\notin\mathbb{Z}$ for $k=2$ and $\{u_{31}-u_{32},\ u_{31}-u_{33},\ u_{32}-u_{33}\} \cap \mathbb{Z}=\emptyset$ for $k=3$). 

\textbf{$\mathfrak{sl}_{2}$-case.} Set $v_{1}:=u_{21}$ and $v_{2}:=u_{22}$. Choose  also $\{v_{i}\}_{i=3,\ldots,n}$ complex numbers such that $v_{i}-v_{j}\notin\mathbb{Z}$ for any $1\leq i< j\leq n$. Let $T(v)$ be the Gelfand-Tsetlin tableau with entries
\begin{align}
v_{ij}=\begin{cases}
v_{j}, & \ \ \text{if}\ \  i\geq 2\\
u_{11}, & \ \ \text{if}\ \  i=1
\end{cases}
\end{align}

for $1\leq j\leq i\leq n$.

 \begin{Lem}\label{Lem: Induced by sl(2)}
If $T(v)$ is the above Gelfand-Tsetlin tableau  then  there exist a subquotient $M$ of $V(T(v))$ isomorphic to an  induced module $M_{\mathfrak p}(\lambda, N)$, where ${\mathfrak p}$ has the Levi subalgebra isomorphic to $\mathfrak{sl}_2$ and $N$  is cuspidal $\mathfrak{sl}_2$-module. 
Moreover, $M$  is simple if and only if $u_{21}-u_{11}, u_{22}-u_{11}\notin\mathbb{Z}$. It has a basis  given by the set of tableaux $T(w)$ in $\mathcal{B}(T(v))$ such that $w_{rs}-w_{r-1,s}\in\mathbb{Z}_{\geq 0}$ for any $ 3\leq r\leq n,\ 1\leq s\leq r-1$. 

\end{Lem}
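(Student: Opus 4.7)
My approach is to identify $M$ combinatorially as a simple subquotient of $V(T(v))$ via Theorem \ref{characterization of irreducible basis} and then realize it as an induced module through Frobenius reciprocity.

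First, I would verify that $T(v)$ is generic in the sense of Definition \ref{def-gen}: for $1\leq i\neq j\leq k\leq n-1$, $v_{ki}-v_{kj}=v_i-v_j\notin\mathbb{Z}$ by hypothesis, so Theorem \ref{Generic Gelfand-Tsetlin modules} produces a well-defined Gelfand-Tsetlin $\mathfrak{gl}_n$-module $V(T(v))$ with basis $\mathcal{B}(T(v))$. For any $T(w)\in\mathcal{B}(T(v))$, using $v_{rs}=v_s$ for $r\geq 2$ and $w-v\in\mathbb{Z}^{n(n+1)/2}$, the integrality of $w_{rs}-w_{r-1,t}$ reduces for $r\geq 3$ to $v_s-v_t\in\mathbb{Z}$ (forcing $s=t$) and for $r=2$ to $v_s-u_{11}\in\mathbb{Z}$. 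Under the hypothesis $u_{21}-u_{11},u_{22}-u_{11}\notin\mathbb{Z}$ the $r=2$ case never arises, so $\Omega(T(w))=\{(r,s,s):3\leq r\leq n,\,1\leq s\leq r-1\}$. The stated basis of $M$ is precisely the set of tableaux on which $\Omega^{+}$ equals this whole set $\Omega$, and Theorem \ref{characterization of irreducible basis} identifies $M$ as a simple subquotient of $V(T(v))$.

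To realize $M$ as an induced module, let $\mathfrak{p}=\mathfrak{l}\oplus\mathfrak{m}$ be the standard parabolic of $\mathfrak{sl}_n$ with block partition $\{1,2\}\cup\{3\}\cup\cdots\cup\{n\}$, so that $[\mathfrak{l},\mathfrak{l}]=\langle E_{12},E_{21}\rangle\cong\mathfrak{sl}_2$ and $\mathfrak{m}$ is spanned by the $E_{ij}$ with $i<j$ and $(i,j)$ in distinct blocks. Consider the subspace $N'\subset M$ spanned by tableaux $T(w)$ with $w_{rs}=v_s$ for all $2\leq r\leq n$, $s\leq r$ (only $w_{11}$ varies). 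The Gelfand-Tsetlin formulas show that $E_{12}$ and $E_{21}$ act on $N'$ only through $w_{11}$, reproducing the $\mathfrak{gl}_2$-action on $V(T(u))$ and identifying $N'$ with the cuspidal $\mathfrak{sl}_2$-module $N$. Every $E_{k,k+1}$ with $k\geq 2$ annihilates $N'$ because the numerator $\prod_{j=1}^{k+1}(w_{ki}-w_{k+1,j})=\prod_j(v_i-v_j)$ contains the vanishing factor $(v_i-v_i)$; the remaining root vectors of $\mathfrak{m}$ are brackets of these and therefore also annihilate $N'$. Thus $N'$ is a $\mathfrak{p}$-submodule on which $\mathfrak{m}$ acts trivially, and Frobenius reciprocity yields a $\mathfrak{g}$-module map $\phi:M_{\mathfrak{p}}(\lambda,N)\to M$ extending $N\cong N'\hookrightarrow M$.

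To conclude, $\phi$ is surjective since its image is a nonzero submodule of the simple module $M$ (simplicity having been established in step one), and injectivity follows from a weight-space count. Both sides have weight spaces indexed by the same data: on the PBW side, $M_{\mathfrak{p}}(\lambda,N)\cong U(\mathfrak{m}^-)\otimes N$ is indexed by monomials in the $\binom{n}{2}-1$ generators $E_{ab}$ ($a>b$, $(a,b)\neq(2,1)$) times the $\mathbb{Z}$-parameter of $N$; on the tableau side, the change of variables $n_{rs}:=w_{r+1,s}-w_{rs}\in\mathbb{Z}_{\geq 0}$ decouples the constraints defining $M$ into $\binom{n}{2}-1$ free non-negative integer parameters plus $w_{11}-u_{11}\in\mathbb{Z}$. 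Matching these forces $\dim M_\mu=\dim M_{\mathfrak{p}}(\lambda,N)_\mu$ for every weight $\mu$, so $\phi$ is an isomorphism. For the simplicity assertion, step one yields the "if" direction; conversely, when $u_{21}-u_{11}\in\mathbb{Z}$ or $u_{22}-u_{11}\in\mathbb{Z}$, $V(T(u))$ acquires a highest- or lowest-weight vector and fails to be cuspidal, and $M_{\mathfrak{p}}(\lambda,N)\cong M$ inherits the reducibility. The main technical obstacle is the verification that $\mathfrak{m}\cdot N'=0$, which rests on the precise cancellation $(v_i-v_i)=0$ in the Gelfand-Tsetlin numerators when both rows involved in $E_{k,k+1}$ are frozen at the top-row values; once this is secured, the rest is a clean application of Frobenius reciprocity and PBW/Gelfand-Tsetlin bookkeeping.
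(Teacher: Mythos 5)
Your proposal is correct and follows the same tableau-based route as the paper, but it supplies far more detail: the paper's entire proof consists of the observation that $V(T(v))$ is generic and a citation of Theorems \ref{Generic Gelfand-Tsetlin modules} and \ref{characterization of irreducible basis}, which establish the combinatorial structure of $M$ but say nothing explicit about the isomorphism with $M_{\mathfrak p}(\lambda,N)$. Your reconstruction of that isomorphism --- exhibiting $N'\subset M$, verifying $\mathfrak{m}\cdot N'=0$ via the vanishing factor $(v_i-v_i)$ in the Gelfand-Tsetlin numerator, invoking Frobenius reciprocity, and then counting weight multiplicities --- makes precise what the paper treats as immediate, and it is the right argument.

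Two small remarks. First, when you dispatch the root vectors of $\mathfrak{m}$ other than $E_{k,k+1}$ ($k\geq 2$), the $E_{1j}$ with $j\geq 3$ are \emph{not} iterated brackets of the $E_{k,k+1}$ with $k\geq 2$ alone; one needs $E_{12}\in\mathfrak{l}$ as well. The conclusion is unaffected because $\{x\in\mathfrak{m}: xN'=0\}$ is $\operatorname{ad}(\mathfrak{l})$-stable (since $\mathfrak{l}\cdot N'\subseteq N'$), so it is enough that it contains the $E_{k,k+1}$, $k\geq 2$ --- but it is worth stating the closure under $\operatorname{ad}(\mathfrak{l})$ rather than closure under brackets in $\mathfrak{m}$. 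Second, the weight-space comparison is asserted rather than carried out: matching the $\binom{n}{2}-1$ nonnegative integers $n_{rs}$ against PBW exponents of $\mathfrak{m}^-$ and $w_{11}-u_{11}$ against the $\mathbb{Z}$-parameter of $N$ does give equal weight-space dimensions (one can verify that in each fixed weight both sides count lattice points of the same polytope), but as written it is a claim rather than a proof; a character computation $\operatorname{ch} M_{\mathfrak p}(\lambda,N)=\operatorname{ch} N\cdot\prod_{\alpha\in\Delta(\mathfrak{m})}(1-e^{-\alpha})^{-1}$ compared against the tableau count would close this cleanly.
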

\begin{proof}
By construction the module $V(T(v))$ is generic. The assertions follow from Theorems \ref{Generic Gelfand-Tsetlin modules} and \ref{characterization of irreducible basis}.
\end{proof}

Denote by $M(N,v_{3},\ldots,v_{n})$ the induced module $M$  described in Lemma \ref{Lem: Induced by sl(2)}.

\textbf{$\mathfrak{sl}_3$-case.} Set $v_{1}:=u_{31}$, $v_{2}:=u_{32}$ and $v_{3}:=u_{33}$. Choose $\{v_{i}\}_{i=4,\ldots,n}$ complex numbers such that $v_{i}-v_{j}\notin\mathbb{Z}$ for any $1\leq i< j\leq n$. Let $T(v)$ be the Gelfand-Tsetlin tableau with entries
\begin{align}
v_{ij}=\begin{cases}
v_{j}, & \ \ \text{if}\ \  i\geq 3\\
u_{ij}, & \ \ \text{if}\ \  i\leq 2
\end{cases}
\end{align}
for $1\leq j\leq i\leq n$.

 \begin{Lem}\label{Lem: Induced by sl(3)}
If $T(v)$ is the above Gelfand-Tsetlin tableau  then  there exists a subquotient $M$ of $V(T(v))$ isomorphic to an  induced module $M_{\mathfrak p}(\lambda, N)$, where ${\mathfrak p}$ has the Levi subalgebra isomorphic to $\mathfrak{sl}_3$ and $N$  is cuspidal $\mathfrak{sl}_3$-module.  If $v$ is generic then $M$ has a basis  given by the set of tableaux $T(w)$ in $\mathcal{B}(T(v))$ such that $w_{rs}-w_{r-1,s}\in\mathbb{Z}_{\geq 0}$ for any $4\leq r\leq n,\ 1\leq s\leq r-1$. Moreover, $M$  is simple if and only if $u_{rs}-u_{r-1,t}\notin\mathbb{Z}$ for any $1\leq s\leq r\leq 3$ and $1\leq t\leq r-1$. 
\end{Lem}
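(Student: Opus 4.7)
The plan is to mirror the proof of the preceding Lemma~\ref{Lem: Induced by sl(2)} with three rows of Gelfand--Tsetlin data playing the role of the $\mathfrak{sl}_3$-Levi factor. First, I fix the parabolic subalgebra $\mathfrak{p}=\mathfrak{l}\oplus\mathfrak{m}\subset\mathfrak{sl}_n$ whose Levi factor has derived subalgebra $\mathfrak{sl}_3=\langle E_{ij}\mid 1\leqslant i,j\leqslant 3\rangle$ and whose nilradical $\mathfrak{m}$ is spanned by $\{E_{ij}\mid i\leqslant 3<j\}$, together with the weight $\lambda\in(\mathfrak{h}^\perp)^*$ determined (up to the Gelfand--Tsetlin $\rho$-shift) by the scalars $v_{4},\dots,v_{n}$ used in rows $\geqslant 4$ of $T(v)$.

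Second, let $\widetilde M\subset V(T(v))$ be the span of tableaux $T(w)\in\mathcal{B}(T(v))$ whose entries in rows $4,\dots,n$ satisfy $w_{rs}-w_{r-1,s}\in\mathbb{Z}_{\geqslant 0}$. A direct inspection of the Gelfand--Tsetlin action formulas of Theorem~\ref{Generic Gelfand-Tsetlin modules} shows that $\widetilde M$ is $\mathfrak{sl}_n$-stable: each numerator $\prod_j (v_{ki}-v_{k+1,j})$ appearing in $E_{k,k+1}$ or $E_{k+1,k}$ for $k\geqslant 3$ contains a factor that vanishes precisely on the boundary of the region, so no generator can push a tableau out of the dominance region. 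The subspace $\widetilde M^{\mathfrak{m}}$ of $\mathfrak{m}$-invariants consists of the tableaux with $w_{rs}=v_s$ for all $r\geqslant 3$; it naturally carries an $\mathfrak{sl}_3$-module structure canonically identified with $V(T(u))\supset N$, with $\mathfrak{h}^\perp$ acting by the scalar $\lambda$.

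Third, by the universal property of induction, the inclusion $N\hookrightarrow V(T(u))\cong\widetilde M^{\mathfrak{m}}$ extends uniquely to an $\mathfrak{sl}_n$-homomorphism $\varphi\colon M_{\mathfrak{p}}(\lambda,N)\to V(T(v))$, and the desired subquotient $M$ is its image. When $v$ is generic, PBW on $U(\mathfrak{m}_-)$ together with the observation that each monomial $E_{j,i}$ with $i\leqslant 3<j$ strictly increases some difference $w_{rs}-w_{r-1,s}$ produces a bijection between the PBW basis of $M_{\mathfrak{p}}(\lambda,N)$ and the asserted set of tableaux; the non-integrality of the pairwise differences $v_i-v_j$ together with Theorem~\ref{characterization of irreducible basis} rules out accidental relations between the images of distinct PBW monomials.

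Finally, for the simplicity criterion, $M\cong M_{\mathfrak{p}}(\lambda,N)$ is simple iff $N$ is simple and the induction has no further submodules. Under the hypothesis $u_{rs}-u_{r-1,t}\notin\mathbb{Z}$ for all $1\leqslant s\leqslant r\leqslant 3$, $1\leqslant t\leqslant r-1$, the set $\Omega(T(u))$ is empty, so by Theorem~\ref{characterization of irreducible basis} the basis $\mathcal{B}(T(u))$ forms a single $\Omega^+$-class, whence $N=V(T(u))$ is simple; the same reasoning applied to $T(v)$ shows that $\widetilde M$ is itself a single $\Omega^+$-class of $V(T(v))$ and therefore simple. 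Conversely, any integer relation $u_{rs}-u_{r-1,t}\in\mathbb{Z}$ splits $N$ into proper subquotients which lift via $\varphi$ to proper submodules of $M$. The main subtlety will be step two: verifying that the Gelfand--Tsetlin action preserves the dominance condition across adjacent rows $\geqslant 4$ requires careful bookkeeping of which numerator factor vanishes, and the interface between row $3$ and row $4$ must be treated with the cooperation of the genericity assumptions on both $u$ and $v_{4},\dots,v_{n}$.
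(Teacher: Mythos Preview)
Your argument in the generic case is essentially the one the paper has in mind (both ultimately rest on Theorems~\ref{Generic Gelfand-Tsetlin modules} and~\ref{characterization of irreducible basis}), and you supply more detail than the paper's one-line appeal to those results. However, there is a genuine gap: you never treat the singular case. The tableau $T(v)$ fails to be generic precisely when $u_{21}-u_{22}\in\mathbb{Z}$, and this is allowed by the hypotheses (only the \emph{top} row of $u$ is assumed generic). In that situation $V(T(v))$ is a $1$-singular Gelfand--Tsetlin module in the sense of \cite{FGR16} and Remark~\ref{Rem: 1-singular modules}: its underlying vector space is spanned by ordinary tableaux \emph{and} derivative tableaux, and the action formulas are not those of Theorem~\ref{Generic Gelfand-Tsetlin modules}. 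Your definition of $\widetilde M$ as a span of ordinary tableaux $T(w)$, your invocation of the generic Gelfand--Tsetlin formulas, and your appeal to Theorem~\ref{characterization of irreducible basis} all break down here. Since the existence claim and the simplicity criterion in the lemma are \emph{not} conditional on $v$ being generic, this case must be handled; the paper does so by pointing to the $1$-singular theory.

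There is also a smaller imprecision in the generic argument: you write ``the inclusion $N\hookrightarrow V(T(u))$,'' but $N$ is only assumed to be a simple \emph{subquotient} of $V(T(u))$, not a submodule. In general the simple pieces of a generic Gelfand--Tsetlin module form a filtration, not a direct sum, so $N$ need not embed. One can repair this by realizing $N$ directly as a span of tableaux with the Gelfand--Tsetlin action (as Theorem~\ref{characterization of irreducible basis} provides) and then building $M$ from that realization, or by passing to the appropriate sub/quotient of $\widetilde M$; but as written the universal-property step does not quite go through.
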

\begin{proof}
If $v$ is singular then the tableau $T(v)$ and the module $V(T(v))$ is $1$-singular (see \cite{FGR16} and Remark \ref{Rem: 1-singular modules}). If  $v$ is generic then the module $V(T(v))$ is generic and the assertions follows from Theorems \ref{Generic Gelfand-Tsetlin modules} and \ref{characterization of irreducible basis}.
\end{proof}

Denote  by $M(N,v_{4},\ldots,v_{n})$ the induced module described in Lemma \ref{Lem: Induced by sl(3)}.

\begin{Rem}
\begin{itemize} Set $N$ a simple $\mathfrak{sl}_{3}$-module.
\item[(i)] If $N$ is not cuspidal then  $M(N,v_{4},\ldots,v_{n})$ is isomorphic to a Verma module or to a Generalized Verma module induced from $\mathfrak{sl}_2$.
\item[(ii)] If $N$ is cuspidal then all weight multiplicities of $M(N,v_{4},\ldots,v_{n})$ are infinite.
\end{itemize}
\end{Rem}

\subsection{Admissible $\mathfrak{sl}_2$-induced modules in the principal orbit revisited.} 
Any simple weight $\mathfrak{sl}_2$-module is a 
Gelfand-Tsetlin module and it admits a tableaux realization. Since we are interested in induced non-highest weight modules we can consider only 
cuspidal $\mathfrak{sl}_2$-modules. Any such module is a module in the principal nilpotent orbit 
of $\mf{sl}_2$
and it is isomorphic to $V(T(u))$ for some generic $u$. In particular, for admissible $\mathfrak{sl}_2$-module  $V(T(u))$  in the principal nilpotent orbit we have $u=(u_{21},u_{22},u_{11})$ where $u_{21}-u_{22}-1=\lambda-\frac{p}{q}(\mu+1)$ for some $\lambda,\mu\in\mathbb{Z}_{\geq 0}$ satisfying $\lambda \leq p-2$ and $\mu \leq q-2$ with $p,q\in \N$, $(p,q)=1$, $p,q\geq 2$. We say in this case that $V(T(u))$ (and its simple quotient which contains $T(u)$) is associated with the weight $\lambda-\frac{p}{q}(\mu+1)$.  Induced module from a generic   $\mathfrak{sl}_2$-module need not to be generic. In fact, there exist singular induced modules as we saw in the case of  $\mathfrak{sl}_3$.

The following theorem gives a complete classification of all simple weight generic admissible $\mathfrak{sl}_2$-induced modules in the principal orbit.  

\begin{Th}\label{Th: Admissible induced by sl(2)}
Let $N$  be a simple admissible cuspidal $\mathfrak{sl}_2$-module associated with the weight $\lambda'=\lambda_{1}-\frac{p}{q}(\mu_{1}+1)$.  If $\lambda_{1}\leq p-n$, $\mu_{1}\leq q-n$ and $p,q\geq n$ then for any choice of $\{\lambda_{i}, \mu_{i}\}_{i=2,\ldots,n-1}\subseteq \mathbb{Z}_{\geq 0}$  such that $\sum_{i=1}^{n-1}\lambda_{i}\leq p-n$ and $\sum_{i=1}^{n-1}\mu_{i}\leq q-n$, there exist complex numbers $\{v_{3},\ldots,v_{n}\}$ such that the induced module $M(N,v_{3},\ldots,v_{n})$ is a simple admissible generic $\mathfrak{sl}_n$-module in the principal orbit. Moreover, these modules exhaust all simple weight admissible 
generic $\mathfrak{sl}_n$-modules in the principal orbit induced from $\mathfrak{sl}_2$.
\end{Th}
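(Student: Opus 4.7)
The plan is to realize $M(N,v_3,\ldots,v_n)$ as a simple subquotient of a strongly generic tableau module $V(T(v))$ whose top row encodes an admissible $\mathfrak{sl}_n$-weight in the principal nilpotent orbit, and then invoke Corollary \ref{cor-generic}.

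Given the admissible $\mathfrak{sl}_n$-weight $\lambda'=\lambda-\tfrac{p}{q}(\mu+\rho)$ determined by the parameters $\{\lambda_i,\mu_i\}_{i=1}^{n-1}$, I would lift it to a $\mathfrak{gl}_n$-weight $(a_1,\ldots,a_n)$ with $a_i-a_{i+1}=\lambda_i-\tfrac{p}{q}(\mu_i+1)$, normalized so that $a_1=u_{21}$, where $(u_{21},u_{22},u_{11})$ is the Gelfand-Tsetlin tableau of $N$. This normalization is consistent because $u_{21}-u_{22}-1=\lambda_1-\tfrac{p}{q}(\mu_1+1)=a_1-a_2$, so $a_2=u_{22}+1$ automatically. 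Set $v_j:=a_j-(j-1)$ for $3\leq j\leq n$ and let $T(v)$ be the tableau of Lemma \ref{Lem: Induced by sl(2)}.

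The key technical point is strong genericity of $T(v)$. For $1\leq i<j\leq n$,
$$v_i-v_j \;=\; \sum_{\ell=i}^{j-1}\lambda_\ell+(j-i)-\tfrac{p}{q}\sum_{\ell=i}^{j-1}(\mu_\ell+1),$$
which lies in $\mathbb{Z}$ iff $q$ divides $\sum_{\ell=i}^{j-1}(\mu_\ell+1)$ (using $(p,q)=1$). This partial sum is at least $j-i\geq 1$ and at most $\sum_{\ell=1}^{n-1}(\mu_\ell+1)\leq(q-n)+(n-1)=q-1$, hence is strictly between $0$ and $q$ and never divisible by $q$. Thus $v_i-v_j\notin\mathbb{Z}$. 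By Lemma \ref{Lem: Induced by sl(2)}, $V(T(v))$ contains a subquotient isomorphic to $M_{\mathfrak{p}}(\lambda,N)$; cuspidality of $N$ forces $u_{21}-u_{11},u_{22}-u_{11}\notin\mathbb{Z}$, so this subquotient is simple. Since its top row is $v_{ni}=a_i-(i-1)$ with $(a_1,\ldots,a_n)$ the $\mathfrak{gl}_n$-lift of an element of $\overline{Pr}_k^{\mathbb{O}_{\mathrm{prin}}}$, Corollary \ref{cor-generic} identifies it as a simple admissible generic Gelfand-Tsetlin module in the principal orbit.

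For the converse, let $M=L_{\mathfrak{p}}(\lambda,N)$ be any simple weight admissible generic $\mathfrak{sl}_n$-module in the principal orbit, with $N$ cuspidal. By Theorem \ref{Th:restriction}, $N\cong M^{\mathfrak{m}}$ is an admissible $\mathfrak{sl}_2$-module at level $k_i$ with $k_i+2=k+n=p/q$ (all roots of $\mathfrak{sl}_n$ being long). By Proposition \ref{Pro:principal}, the $\mathfrak{sl}_n$-weight of $M$ has the form $\lambda-\tfrac{p}{q}(\mu+\rho)$ with $\sum\lambda_\ell\leq p-n$ and $\sum\mu_\ell\leq q-n$; reading off $(\lambda_1,\mu_1)$ pins down the weight parameter of $N$ (automatically satisfying the tight bounds $\lambda_1\leq p-n$, $\mu_1\leq q-n$ because the remaining $\lambda_\ell,\mu_\ell$ are non-negative), and the remaining parameters produce $v_3,\ldots,v_n$ as in the construction above, giving $M\cong M(N,v_3,\ldots,v_n)$. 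The main obstacle is the strong genericity calculation: it is precisely the sharp admissibility bound $\sum\mu_\ell\leq q-n$ that keeps every partial sum $\sum_{\ell=i}^{j-1}(\mu_\ell+1)$ strictly below $q$, avoiding integer resonances and allowing the invocation of Corollary \ref{cor-generic}.
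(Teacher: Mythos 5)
Your proposal is correct and follows essentially the same route as the paper: set $v_1=u_{21}$, $v_2=u_{22}$, extend to $v_3,\ldots,v_n$ so that consecutive differences encode the admissible weight data, and invoke Lemma \ref{Lem: Induced by sl(2)} together with Corollary \ref{cor-generic}. Your normalization $v_j=a_j-(j-1)$ with $a_1=u_{21}$ reproduces the paper's recursion $v_i-v_{i+1}-1=\lambda_i-\tfrac{p}{q}(\mu_i+1)$ exactly. The one thing you add that the paper leaves implicit is the divisibility argument showing strong genericity of $T(v)$: since $(p,q)=1$, $v_i-v_j\in\mathbb{Z}$ would force $q\mid\sum_{\ell=i}^{j-1}(\mu_\ell+1)$, yet this partial sum lies strictly between $0$ and $q$ because $\sum_{\ell=1}^{n-1}\mu_\ell\leq q-n$; this is the precise point where the admissibility bound is consumed, and spelling it out is a genuine improvement over the paper's one-line "hence strongly generic." Your converse argument (restriction via Theorem \ref{Th:restriction}, then Proposition \ref{Pro:principal} to read off the parameters) likewise matches the logic the paper compresses into "The statements follow from Lemma \ref{Lem: Induced by sl(2)}."
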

\begin{proof} Note that $N$ belongs to the nilpotent orbit and, hence, strongly generic. 
Set $v_{1}=u_{21}$, $v_{2}=u_{22}$ and choose $v_{3},\ldots,v_{n}$ such that $v_{i}-v_{i+1}-1=\lambda_{i}-\frac{p}{q}(\mu_{i}+1)$ for any $i=1,\ldots,n-1$. The statements follow from Lemma \ref{Lem: Induced by sl(2)}.
\end{proof}
\begin{Rem}
Theorem \ref{Th: Admissible induced by sl(2)} is a ratification of Corollary \ref{cor-generic} for generic modules in the principal orbit. In order to complete a classification of all simple admissible $\mathfrak{sl}_2$-induced modules one needs to consider non-principal orbits and singular modules. In the case of 
$\mathfrak{sl}_{3}$ such modules were constructed in  Theorem \ref{Th: Modules associated with minimal orbit} and Theorem \ref{thm-q>3}. 
\end{Rem}

\subsection{Admissible $\mathfrak{sl}_3$-induced modules}
 
Any simple admissible $\mathfrak{sl}_3$-module in the principal  nilpotent orbit admits a tableaux realization as a subquotient of some Gelfand-Tsetlin module $V(T(u))$ where $u=(u_{31},u_{32},u_{33},u_{21},u_{22},u_{11})$ and 
  for $i=1,2$,  $u_{3i}-u_{3,i+1}-1=\lambda_{i}-\frac{p}{q}(\mu_{i}+1)$ for some $\lambda_{i},\mu_{i}\in\mathbb{Z}_{\geq 0}$ satisfying $\lambda_{1}+\lambda_{2}\leq p-3$ and $\mu_{1}+\mu_{2}\leq q-3$ with $p,q\in \N$, $(p,q)=1$, $p,q\geq 3$.  We say in this case that $V(T(u))$ (and its simple quotient which contains $T(u)$)  is associated with the weight $(\lambda_{1}-\frac{p}{q}(\mu_{1}+1),\lambda_{2}-\frac{p}{q}(\mu_{2}+1))$.

\begin{Th}\label{Th: Admissible induced by sl(3)}
Let $N$ to be a simple admissible $\mathfrak{sl}_3$-module in the principal nilpotent orbit associated with the weight $\lambda'=(\lambda_{1}-\frac{p}{q}(\mu_{1}+1),\lambda_{2}-\frac{p}{q}(\mu_{2}+1))$ such that $\lambda_{1}+\lambda_{2}\leq p-n$ and $\mu_{1}+\mu_{2}\leq q-n$ and $p,q\geq n$. Let $\{\lambda_{i}, \mu_{i}\}_{i=3,\ldots,n-1}\subseteq \mathbb{Z}_{\geq 0}$  such that $\sum_{i=1}^{n-1}\lambda_{i}\leq p-n$ and $\sum_{i=1}^{n-1}\mu_{i}\leq q-n$. Set $v_{1}=u_{31}$, $v_{2}=u_{32}$, $v_{3}=u_{33}$ and choose complex numbers $\{v_{4},\ldots,v_{n}\}$ such that $v_{i}-v_{i+1}-1=\lambda_{i}-\frac{p}{q}(\mu_{i}+1)$ for any $i=1,\ldots,n-1$.
Then
the induced module $M(N,v_{4},\ldots,v_{n})$ is an admissible $\mathfrak{sl}_n$-module in the principal orbit. Moreover, simple modules $M(N,v_{4},\ldots,v_{n})$ exhaust all simple admissible  $\mathfrak{sl}_3$-induced modules in the principal nilpotent orbit. 
\end{Th}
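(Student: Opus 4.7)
The plan has two parts: verify admissibility of the constructed modules, then establish exhaustion.

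\textbf{Part I (Admissibility).} By Lemma \ref{Lem: Induced by sl(3)}, the module $M := M(N,v_4,\dots,v_n)$ is realized as a submodule of the Gelfand-Tsetlin module $V(T(v))$ whose top row is $(v_1,\dots,v_n)$. The $\mathfrak{sl}_n$-weight $\lambda_M$ at the top tableau satisfies
$\lambda_M(\alpha_i^\vee)=v_i-v_{i+1}-1=\lambda_i-\tfrac{p}{q}(\mu_i+1)$
for $1\le i\le n-1$. Hence $\lambda_M=\tilde\lambda-\tfrac{p}{q}(\tilde\mu+\rho)$ with $\tilde\lambda:=\sum_i\lambda_i\varpi_i$ and $\tilde\mu:=\sum_i\mu_i\varpi_i$, and the hypotheses $\sum_i\lambda_i\le p-n$, $\sum_i\mu_i\le q-n$ translate exactly into $\tilde\lambda\in\widehat{P}_+^{p-n}$ and $\tilde\mu\in\widehat{P}_+^{q-n}$, so Proposition \ref{Pro:principal} gives $\lambda_M\in \overline{Pr}_k^{\mathbb{O}_{prin}}$. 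The center $\mathcal{Z}(\mathfrak{sl}_n)$ acts on $V(T(v))$, and therefore on $M$, through the single character $\chi_{\lambda_M}$ (its values are symmetric polynomials in the top-row entries). Since $\lambda_M$ lies in the principal orbit, $L(\lambda_M)=M(\lambda_M)$ and $J_{\lambda_M}=U(\mathfrak{sl}_n)\ker\chi_{\lambda_M}$. Combining with $I_k\subseteq J_{\lambda_M}$ from Theorem \ref{Th:classification-of-admissible-modules},
\begin{align*}
\on{Ann}_{U(\mathfrak{sl}_n)}(M)\;\supseteq\; U(\mathfrak{sl}_n)\ker\chi_{\lambda_M}\;=\;J_{\lambda_M}\;\supseteq\; I_k,
\end{align*}
so $M$ is an admissible $\mathfrak{sl}_n$-module in the principal orbit.

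\textbf{Part II (Exhaustion).} Let $M'\cong M_{\mathfrak{p}}(\lambda',N')$ be a simple admissible $\mathfrak{sl}_n$-module of level $k$ in the principal orbit, where (by the preceding remark) $\mathfrak{p}=\mathfrak{l}+\mathfrak{m}$ with $[\mathfrak{l},\mathfrak{l}]$ the standard $\mathfrak{sl}_3$ generated by $\alpha_1,\alpha_2$. Theorem \ref{Th:restriction} applied to $(M')^{\mathfrak{m}}\supseteq N'$ shows $N'$ is admissible of level $k_1$ with $k_1+3=k+n=p/q$. The central character $\chi_{\lambda_{M'}}$ lies in $\overline{Pr}_k^{\mathbb{O}_{prin}}$; by Proposition \ref{Pro:principal}, $\lambda_{M'}=\tilde\lambda-\tfrac{p}{q}(\tilde\mu+\rho)$ with $\tilde\lambda\in\widehat{P}_+^{p-n}$ and $\tilde\mu\in\widehat{P}_+^{q-n}$. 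The restriction to the $\alpha_1,\alpha_2$-coordinates is the $\mathfrak{sl}_3$-central character of $N'$, namely
$\lambda'_{\mathfrak{sl}_3}=(\lambda_1-\tfrac{p}{q}(\mu_1+1),\lambda_2-\tfrac{p}{q}(\mu_2+1))$,
and the same non-degeneracy computation (denominators $\sum_{l=i}^{j}(\mu_l+1)<q$ preclude integer pairings with any root) places $\lambda'_{\mathfrak{sl}_3}\in \overline{Pr}_{k_1}^{\mathbb{O}_{prin}^{\mathfrak{sl}_3}}$, so $N'$ lies in the principal orbit of $\mathfrak{sl}_3$ and satisfies the theorem's hypothesis. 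Taking $(v_1,v_2,v_3):=(u_{31},u_{32},u_{33})$ from a tableau realization of $N'$ and the remaining $v_i$ via $v_i-v_{i+1}-1=\lambda_i-\tfrac{p}{q}(\mu_i+1)$, Lemma \ref{Lem: Induced by sl(3)} produces $M(N',v_4,\dots,v_n)\cong M_{\mathfrak{p}}(\lambda',N')=M'$.

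\textbf{Main obstacle.} The crux is controlling the orbit of $N'$: one must show that the $\mathfrak{sl}_3$-projection of a $\overline{Pr}_k^{\mathbb{O}_{prin}}$-weight remains in the principal orbit of $\mathfrak{sl}_3$. I handle this via the non-degeneracy computation built into Proposition \ref{Pro:principal} for $\mathfrak{sl}_3$ at level $k_1$. An independent route uses Gelfand-Kirillov additivity,
$\on{Dim}M'=\dim\mathfrak{m}+\on{Dim}N'$,
with $\on{Dim}M'=\binom{n}{2}$ and $\dim\mathfrak{m}=\binom{n}{2}-3$, forcing $\on{Dim}N'=3=\binom{3}{2}$, the principal-orbit value for $\mathfrak{sl}_3$. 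Either argument achieves the required orbit identification, after which the reconstruction of $(v_4,\dots,v_n)$ from $\lambda_{M'}$ and the isomorphism with $M(N',v_4,\dots,v_n)$ are routine applications of Lemma \ref{Lem: Induced by sl(3)}.
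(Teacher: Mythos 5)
Your argument is correct and gives a complete proof, which is more than the paper does: the published proof of this theorem is a single sentence, ``The statement is clear from the construction.'' So you are filling in genuine detail. Part~I is the right bookkeeping: the top row determines the central character $\chi_{\lambda_M}$, the hypotheses on the $\lambda_i,\mu_i$ force $\lambda_M$ to lie in $[\overline{Pr}_k^{\mathbb{O}_{prin}}]$ via Proposition~\ref{Pro:principal}, and since $J_{\lambda_M}=U(\mathfrak{sl}_n)\ker\chi_{\lambda_M}\supseteq I_k$ by Theorem~\ref{Th:classification-of-admissible-modules}, the module is admissible. (You state only $\on{Ann}(M)\supseteq J_{\lambda_M}$; to identify the orbit you should close with the maximality in Proposition~\ref{Pro:equiv-class-adm}, giving $\on{Ann}(M)=J_{\lambda_M}$ when $M\neq 0$. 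Also, Lemma~\ref{Lem: Induced by sl(3)} yields a subquotient, not a submodule, of $V(T(v))$ — a harmless slip.) Part~II is the substantive contribution beyond the paper's terse remark, and your primary route through non-degeneracy is the clean one: since $\lambda_{M'}$ pairs non-integrally with every root of $\Delta$, its restriction to $\{\alpha_1,\alpha_2,\alpha_1+\alpha_2\}$ pairs non-integrally with every $\mathfrak{sl}_3$-root, so $N'$ is in the principal orbit of $\mathfrak{sl}_3$; combined with Theorem~\ref{Th:restriction} and the level formula $k_1+3=k+n$ this places $N'$ within the theorem's hypotheses and the reconstruction via Lemma~\ref{Lem: Induced by sl(3)} goes through. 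Your alternative GK-dimension route, however, is not self-contained as stated: the equality $\on{Dim}M'=\binom{n}{2}$ is not automatic for a simple module in the principal orbit with possibly infinite weight multiplicities (a priori one only has $\on{Dim}M'\leq \dim\on{Var}(\on{Ann}M')=2\binom{n}{2}$, and the lower bound $\geq\binom{n}{2}$ from Gabber's inequality does not pin it down). Since the non-degeneracy argument carries the proof on its own, this does not affect correctness, but the GK-dimension ``independent route'' as written has a gap.
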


\begin{proof}
The statement is clear from the construction.
\end{proof}

  Theorem \ref{Th: Admissible induced by sl(3)}   together with Lemma \ref{Lem: Induced by sl(3)} and Theorem \ref{thm-q>3}    
  give a complete classification of all simple weight admissible $\mathfrak{sl}_3$-induced modules $M(N,v_{4},\ldots,v_{n})$  in the principal nilpotent orbit. 
  
 \begin{Rem}
 Theorem \ref{Th: Admissible induced by sl(3)} describes both generic and singular $\mathfrak{sl}_3$-induced modules $M(N,v_{4},\ldots,v_{n})$.  All these modules have infinite weight multiplicities. Generic modules were described already 
 in Corollary \ref{cor-generic}. 
 \end{Rem}

\bibliographystyle{alpha}


\end{document}